\newtheorem{thm}{Theorem}[section]
\newtheorem{cor}[thm]{Corollary}
\newtheorem{lem}[thm]{Lemma}
\newtheorem{prop}[thm]{Proposition}
\theoremstyle{definition}
\newtheorem{defn}[thm]{Definition}
\newtheorem{assm}[thm]{Assumption}
\newtheorem{rmk}[thm]{Remark}
\DeclareMathOperator{\ch}{char}
\DeclareMathOperator{\End}{End} 
 \DeclareMathOperator{\Div}{Div}
\DeclareMathOperator{\divisor}{div} 
\DeclareMathOperator{\Proj}{Proj} \DeclareMathOperator{\Sym}{Sym}
\DeclareMathOperator{\Pic}{Pic} \DeclareMathOperator{\Alb}{Alb}
\newcommand{\C}{\ensuremath\mathbb{C}}
\newcommand{\Z}{\ensuremath\mathbb{Z}}
\newcommand{\Q}{\ensuremath\mathbb{Q}}
\newcommand{\fa}{\ensuremath\mathfrak{a}}
\newcommand{\PP}{\ensuremath\mathbb{P}}
\newcommand{\calO}{\ensuremath\mathcal{O}}
\newcommand{\Set}[2]{\left\{#1:#2\right\}}
\begin{document}
\title{On relations among 1-cycles on cubic hypersurfaces}
\author{Mingmin Shen}
\address{DPMMS, University of Cambridge, Wilberforce Road, Cambridge CB3 0WB, UK}
\email{M.Shen@dpmms.cam.ac.uk}

\subjclass[2010]{14C25, 14N25}

\keywords{Cubic hypersurface, 1-cycles, intermediate jacobian.}
\date{}
\maketitle

\begin{abstract}
In this paper we give two explicit relations among 1-cycles modulo
rational equivalence on a smooth cubic hypersurface $X$. Such a
relation is given in terms of a (pair of) curve(s) and its secant
lines. As the first application, we reprove Paranjape's theorem that
$\mathrm{CH}_1(X)$ is always generated by lines and that it is
isomorphic to $\Z$ if the dimension of $X$ is at least 5. Another
application is to the intermediate jacobian of a cubic threefold
$X$. To be more precise, we show that the intermediate jacobian of
$X$ is naturally isomorphic to the Prym-Tjurin variety constructed
from the curve parameterizing all lines meeting a given rational
curve on $X$. The incidence correspondences play an important role
in this study. We also give a description of the Abel-Jacobi map for
1-cycles in this setting.
\end{abstract}

\section{Introduction}
Let $X\subset\PP^n_k$, where $n\geq 3$, be a smooth cubic
hypersurface in projective space over an algebraically closed field
$k$. In this paper, we are going to study relations among 1-cycles
on $X$. We explain the main theorem (Theorem \ref{main theorem}) of
this paper in the following special situations. Let $C\subset X$ be
a general smooth curve on $X$. Then there are finitely many lines,
$E_i\subset X$, meeting $C$ in two points. These lines will be
called the secant lines of $C$. The first relation we get is
$$
(2e-3)C+\sum E_i =\text{const.}
$$
in $\mathrm{CH}_1(X)$, where $e=\deg(C)$. A second relation is about
a pair of curves on $X$. A simple version goes as follows. Let $C_1$
and $C_2$ be a pair of general smooth curves on $X$. Then there are
finitely many lines, $E_i\subset X$, meeting both $C_1$ and $C_2$.
These are called secant lines of the pair $(C_1,C_2)$. Then our
second relation reads
$$
2e_2C_1+2e_1C_2+\sum E_i=\text{const.}
$$
in $\mathrm{CH}_1(X)$, where $e_1=\deg(C_1)$ and $e_2=\deg(C_2)$. In
all cases, the right hand side is a multiple of the class of the
restriction of a linear $\PP^2$ to $X$. Note that in these relations
the dimension of $X$ does not appear in the coefficients and they
hold in all characteristics. As an application, we reprove the
following theorem of K.H.Paranjape, see \cite{paranjape}(Proposition
4.2).
\begin{thm}
Let $X/k$ be a smooth cubic hypersurface as above. Then the Chow
group $\mathrm{CH}_1(X)$ of 1-cycles on $X$ is generated by lines.
When $\dim X\geq5$, we have $\mathrm{CH}_1(X)\cong\Z$.
\end{thm}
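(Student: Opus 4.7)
The plan is to use the two explicit relations of Theorem \ref{main theorem} to express the class of an arbitrary irreducible curve on $X$ as a $\Z$-linear combination of classes of lines, then to identify all line classes in $\mathrm{CH}_1(X)$ when $\dim X\geq 5$. Let $\Lambda \subset \mathrm{CH}_1(X)$ denote the subgroup generated by the classes of lines on $X$; what must be shown is first that $\Lambda=\mathrm{CH}_1(X)$, and second that $\Lambda\cong\Z$ in the higher-dimensional range.

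I begin by observing that the "constants" appearing on the right-hand side of either relation --- integer multiples of the class of a plane section $X\cap\PP^2$ --- belong to $\Lambda$. Indeed, specializing the plane to one whose intersection with $X$ is a union of three lines (which may always be arranged, for example by restricting to a smooth cubic surface section $X\cap\PP^3$ and invoking one of its tritangent planes) exhibits the plane-section class as a sum of three line classes. Now let $C\subset X$ be an irreducible curve of degree $e$. The case $e=1$ is trivial. For $e\geq 2$, the first relation gives
\[
(2e-3)[C] \in \Lambda,
\]
while applying the second relation to the pair $(C,\ell)$ for a general line $\ell\subset X$ yields
\[
2[C] \in \Lambda.
\]
Since $2e-3$ is odd, $\gcd(2e-3,2)=1$, and Bezout forces $[C]\in\Lambda$. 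As every 1-cycle is a $\Z$-combination of classes of irreducible curves, this proves $\mathrm{CH}_1(X)=\Lambda$.

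For the statement when $\dim X\geq 5$, it suffices to show that any two lines $\ell_1,\ell_2\subset X$ are rationally equivalent in $X$. In this range the Fano variety of lines $F(X)$ is a smooth rationally connected projective variety, so $\ell_1$ and $\ell_2$ can be joined in $F(X)$ by a chain of rational curves, each of which gives a $\PP^1$-family of lines on $X$ whose members are all rationally equivalent. Hence $[\ell_1]=[\ell_2]$, and $\mathrm{CH}_1(X)=\Lambda$ is cyclic; the degree map $\mathrm{CH}_1(X)\to\Z$ sending a line to $1$ is then an isomorphism.

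The main obstacle I anticipate is ensuring that the relations of Theorem \ref{main theorem}, stated for \emph{general} smooth $C$ (and pairs $(C_1,C_2)$), can be applied to an arbitrary irreducible curve on $X$. This requires either an explicit moving argument replacing $[C]$ by a $\Z$-combination of classes of curves in sufficiently general position, or a more general form of the main theorem that already accommodates arbitrary $C$. A secondary but routine input is the rational connectedness of $F(X)$ for $\dim X\geq 5$, which can be invoked from the literature.
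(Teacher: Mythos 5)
Your argument follows the same route as the paper's: the key arithmetic step is exactly the paper's, namely that relation (a) gives $(2e-3)[C]\in\Lambda$, relation (b) applied to $(C,\ell)$ for a line $\ell$ gives $2[C]\in\Lambda$, and $\gcd(2e-3,2)=1$ forces $[C]\in\Lambda$; and for $\dim X\geq 5$ both you and the paper invoke rational connectedness of the Fano scheme $F(X)$ (which is smooth Fano for $n\geq 6$ since $\omega_F\cong\calO_F(5-n)$) to conclude that all lines are rationally equivalent. Your explicit remark that the plane-section class $h^{n-2}$ lies in $\Lambda$ (via a plane cutting $X$ in three lines) is a point the paper leaves implicit, and it is a necessary one, since the right-hand sides of the relations are multiples of $h^{n-2}$.

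However, the obstacle you flag at the end is a genuine gap, not a routine verification, and your proposal does not close it. The relations of Theorem \ref{main theorem} are in fact stated for arbitrary connected (possibly singular) curves, but only under the hypothesis that $C$ (resp.\ the pair) has \emph{finitely many} generalized secant lines, and this hypothesis genuinely fails for some irreducible curves; moreover one cannot simply ``move'' $[C]$ into general position inside $\mathrm{CH}_1(X)$, since rational equivalence of curves on $X$ is exactly what is being studied. The paper's resolution is a degeneration argument: attach sufficiently many very free rational curves $C_i$ to $C$ so that the resulting curve $C'=C\cup(\cup_i C_i)$ deforms without obstruction and smooths out in a family $S\to B$ over a smooth complete curve, with general fibre well-positioned; the assignments $b\mapsto(\text{secant lines of }S_b)$ and $b\mapsto(\text{secant lines of }(L,S_b))$ extend to morphisms $B\to\Sym^{n_i}(F)$, and Lemma \ref{continuity lemma} (constancy of a cycle-valued map on a curve once it is constant on a dense open set) transports the relations from the general fibre to the special fibre $C'$; one then subtracts the known relations for the auxiliary curves $C_i$ to recover the statement for $C$. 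Without this step (or an equivalent one) your proof only establishes the result for irreducible curves satisfying the finiteness hypotheses, so you should either import the paper's full-strength Theorem \ref{main theorem} together with this smoothing argument, or supply a substitute for it.
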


If $X\subset\PP^4_k$ is a smooth cubic threefold with
$\mathrm{char}(k)\neq 2$, then $\mathrm{CH}_1(X)$ is
well-understood. Let $\mathrm{A}_1(X)\subset\mathrm{CH}_1(X)$ be the
subgroup of 1-cycles algebraically equivalent to zero modulo
rational equivalence. Then $\mathrm{A}_1(X)$ is isomorphic to the
intermediate jacobian $J(X)$ of $X$. A precise definition of the
intermediate jacobian will be given in Section 5, Definition
\ref{defn of intermediate jacobian}. Roughly speaking, $J(X)$ is the
universal abelian variety with a principal polarization into which
$\mathrm{A}_1(X)$ maps. One important feature is that for any
algebraic family $Z\rightarrow T$ of 1-cycles on $X$, there is an
induced Abel-Jacobi map $\Psi_T:T\rightarrow J(X)$, which is a
morphism between varieties. Associated to $\Psi_T$, we also have
$\psi_T:\Alb(T)\rightarrow J(X)$, which is a homomorphism of abelian
varieties and is also called the Abel-Jacobi map. There are two ways
to realize $J(X)$ from the geometry of $X$. If we use $F=F(X)$ to
denote the Fano surface of lines on $X$. Then the Albanese variety
$\Alb(F)$ of $F$ carries a natural principal polarization and the
Abel-Jacobi map induces an isomorphism between $\Alb(F)$ and $J(X)$
as principally polarized abelian varieties. A second realization
goes as follows. Let $l\subset X$ be a general line on $X$. Then all
lines meeting $l$ are parameterized by a smooth curve
$\tilde{\Delta}$ of genus 11. The curve $\tilde{\Delta}$ carries a
natural fixed-point-free involution whose quotient is a smooth plane
quintic $\Delta$. Then the associated Prym variety
$\mathrm{Pr}(\tilde{\Delta}/\Delta)$ is naturally isomorphic to
$J(X)$.

As another application of the above natural relations, we give a
third realization of the intermediate jacobian $J(X)$ as a
Prym-Tjurin variety. We refer to \cite{tjurin}, \cite{bloch-murre}
and \cite{kanev} for the basic facts about Prym-Tjurin varieties.
Let $C\subset X$ be a general smooth rational curve. We use
$\tilde{C}$ to denote the normalization of the curve parameterizing
all lines meeting $C$. Then let $D_U=\{([l_1],[l_2])\in
\tilde{C}\times\tilde{C}\}$ such that $l_1$ and $l_2$ are not secant
lines of $C$ and that $l_1$ and $l_2$ meet transversally in a point
not on $C$. We take $D$ to be the closure of $D_U$ in
$\tilde{C}\times\tilde{C}$, which can be called the incidence
correspondence on $\tilde{C}$. Then $D$ defines an endomorphism $i$
of $\tilde{J}=J(\tilde{C})$. Our next result is the following.
\begin{thm}
Let $X/k$ be a smooth cubic threefold and assume that $\ch(k)\neq2$.
Let $C$, $\tilde{C}$, $\tilde{J}$ and $i$ be as
above. Then the following are true.\\
(a) The endomorphism $i$ satisfies the following quadratic relation
$$
(i-1)(i+q-1)=0
$$
where $q=2\deg(C)$.\\
(b) Assume that $\ch(k)\nmid q$. Then the Prym-Tjurin variety
$P=\mathrm{Pr}(\tilde{C},i)=\mathrm{Im}(i-1)\subset\tilde{J}$
carries a natural principal polarization whose theta divisor $\Xi$
satisfies
$$
\Theta_{\tilde{J}}|_P\equiv q\Xi
$$
(c) Assume that $\ch(k)\nmid q$. Then the Abel-Jacobi map
$\psi=\psi_{\tilde{C}}:\tilde{J}\rightarrow J(X)$ factors through
$i-1$ and gives an isomorphism
$$
u_C:(\mathrm{Pr}(\tilde{C},i),\Xi)\rightarrow (J(X),\Theta_{J(X)})
$$
\end{thm}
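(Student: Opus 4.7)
For part (a), since $\tilde J$ is generated as a group by degree-zero divisor classes $[l_1]-[l_0]$ on $\tilde C$, it suffices to verify that for generic $[l_1]\in\tilde{C}$ the 0-cycle
$$
\big(i^{2}+(q-2)\,i-(q-1)\,\mathrm{id}\big)([l_1])
$$
has a class in $\Pic(\tilde{C})$ that is independent of $l_1$. The starting point is the second relation of Theorem \ref{main theorem} applied to the pair $(C,l_1)$,
$$
2C+2e\cdot l_1+\sum_{j}E_j\equiv\mathrm{const.}\quad\text{in }\mathrm{CH}_1(X),
$$
where $\{E_j\}$ are the lines meeting both $C$ and $l_1$. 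I split the $E_j$'s into three geometric types: (i) lines through the point $l_1\cap C$; (ii) secants of $C$ that meet $l_1$ off $C$; and (iii) non-secants of $C$ that meet $l_1$ off $C$. By definition of $D$, type (iii) is exactly the support of the divisor $i([l_1])$ on $\tilde{C}$; types (i) and (ii) are boundary contributions controlled respectively by the incidence structure of lines through a variable point of $C$ and by the \emph{first} main relation applied to $C$ itself. Iterating the same decomposition with $l_1$ replaced by a generic $l_2\in i([l_1])$, and carefully tallying the double-counted configurations (in particular coplanar triples of lines on $X$), collects the terms into exactly the predicted linear combination of $i([l_1])$, $[l_1]$, and a constant.

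Given part (a), part (b) is standard Prym--Tjurin theory. The correspondence $D$ is symmetric on $\tilde{C}\times\tilde{C}$, hence $i\in\End(\tilde J)$ is self-adjoint for the canonical principal polarization. When $\ch(k)\nmid q$, the minimal polynomial $(x-1)(x+q-1)$ is separable, $\tilde J$ decomposes up to isogeny as $\ker(i-1)^{0}\oplus P$ with $P=\mathrm{Im}(i-1)$, and a Rosati-trace computation (as in \cite{kanev}) gives $\Theta_{\tilde J}|_{P}\equiv q\,\Xi$ for a principal polarization $\Xi$ on $P$.

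For part (c), I first verify $\psi_{\tilde{C}}\circ(i+q-1)=0$, obtained by pushing the cycle identity underlying part (a) forward to $J(X)$ via the Abel--Jacobi map. This yields a factorization
$$
\psi_{\tilde{C}}\colon\tilde J\twoheadrightarrow P\xrightarrow{\,u_{C}\,}J(X).
$$
Surjectivity of $u_C$ follows from that of $\psi_{\tilde C}$, which in turn follows from the Paranjape-type result proved earlier in this paper (lines generate $\mathrm{CH}_1(X)$) combined with the fact that, for a general rational $C$, the lines meeting $C$ sweep out $X$. Finally, the polarization identity $u_{C}^{*}\Theta_{J(X)}\equiv q\,\Xi$ together with $\dim P=\dim J(X)=5$ forces $u_{C}$ to be an isomorphism of principally polarized abelian varieties.

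The main obstacle is the bookkeeping in part (a), especially the iteration step for $i^{2}([l_1])$. The boundary contributions vary with $l_1$, and their total must be shown to assemble into exactly the linear expression $(q-2)\,i([l_1])-(q-1)\,[l_1]$ modulo constants on $\tilde{C}$. I expect the cleanest setup is to introduce the 3-fold incidence variety
$$
\mathcal{I}=\{\,([l_1],[l_2],x)\in\tilde{C}\times\tilde{C}\times X \ :\ x\in l_1\cap l_2\,\}
$$
and to express both $i$ and $i^{2}$ via its projections, reducing the computation to intersection-theoretic invariants on $\mathcal{I}$ that can be matched against the universal relations of Theorem \ref{main theorem}.
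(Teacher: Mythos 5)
Your plan for part (a) is where the real gap lies. You propose to establish the quadratic relation by iterating the correspondence $D$ directly and ``carefully tallying the double-counted configurations'' in $i^2([l_1])$; but that tally is precisely the entire content of the statement, and you do not carry it out. The paper avoids this computation altogether by a structural observation you are missing: the endomorphism $i_0=i-1$ induced by the incidence correspondence factors as $i_0=\eta^*\circ\lambda_I\circ\eta_*$ through $\Alb(F)\cong J(X)$, where $\eta:\tilde{C}\to F$ is the natural map and $I$ is the incidence correspondence on the Fano surface. Because of this factorization it suffices to prove the \emph{linear} identity $\eta_*\circ(i_0+q)=0$ in $\Alb(F)$, and that identity is exactly the relation among $1$-cycles (relation \eqref{relation 3} of Theorem \ref{main theorem}, together with Lemma \ref{six lines} for the five extra lines through $L\cap C$); composing with $\eta^*\circ\lambda_I$ then gives $i_0(i_0+q)=0$ for free. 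Note also that your starting relation $2C+2e\,l_1+\sum E_j=\mathrm{const.}$ is the one for a \emph{disjoint} pair, whereas every $l_1$ parameterized by $\tilde{C}$ meets $C$; the applicable relation is $(2e-1)l_1+2C+\sum E_j=(3e-2)h^{2}$, and the discrepancy in the coefficient of $l_1$ is exactly what the lines through $l_1\cap C$ account for. Without the factorization through $\Alb(F)$, your types (i)--(iii) decomposition and the $\mathcal{I}$-incidence-variety setup remain a programme, not a proof.

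Parts (b) and (c) also have defects. For (b), Kanev's theorem requires $D$ to be symmetric, effective \emph{and fixed-point-free}; you verify only symmetry. Fixed-point-freeness is a genuine geometric condition (a fixed point would force $l_1$ to lie on the ramification divisor $B$ with $C$ tangent to $B$), and it is precisely what the admissibility hypothesis on $C$ is for. For (c), your final step is incorrect as stated: you invoke ``$u_C^*\Theta_{J(X)}\equiv q\,\Xi$'' to force an isomorphism, but that identity (if it held) would make $u_C$ an isogeny of degree $q^{10}$, not an isomorphism; what must be proved is $u_C^*\Theta_{J(X)}\equiv\Xi$, and surjectivity plus equality of dimensions only yields an isogeny. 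The paper closes this gap by computing the cohomology class $\psi_*(\tilde{C})\equiv (q/24)\theta^4$ in $\Alb(F)$ via Welters' criterion (using $\eta_*\circ(i_0+q)=0$ again and the identity $\sum L_i=(1-q)L+\mathrm{const.}$ in $J(X)$), and then applying Kanev's Theorem 5.6 to conclude that $(J(X),\Theta)$ is a direct summand of $(\mathrm{Pr}(\tilde{C},i),\Xi)$, whence equality. Some replacement for this polarization comparison is indispensable.
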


\begin{rmk}
This is a simplified version of Theorem \ref{J(X) as prym-tjurin}.
When $C$ is a general line on $X$, the above construction recovers
the Prym realization of $J(X)$.
\end{rmk}
We next study how the above construction varies when we change the
curve $C$. Let $C_1$ and $C_2$ be two general smooth rational curves
on $X$. Let
$D_{21}=\{([l_1],[l_2])\in\tilde{C}_1\times\tilde{C}_2\}$ such that
$l_1$ meets $l_2$. We view $D_{21}$ as a homomorphism from
$\tilde{J}_1$ to $\tilde{J}_2$.

\begin{prop}
Assume that $\ch(k)\nmid q_1q_2$. Then the following are true\\
(a) The image of $D_{21}$ is $\mathrm{Pr}(\tilde{C}_2,i_2)$. The
homomorphism $D_{21}$ factors through $i_1-1$ and gives an
isomorphism
$$
t_{21}:(\mathrm{Pr}(\tilde{C}_1,i_1),\Xi_1)\rightarrow
(\mathrm{Pr}(\tilde{C}_2,i_2),\Xi_2)
$$
(b) The isomorphism $t_{21}$ is compatible with $u_{C_i}$, namely
$u_{C_2}\circ t_{21}=u_{C_1}$.
\end{prop}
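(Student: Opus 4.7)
\medskip
The plan is to deduce everything from one Abel--Jacobi computation, the Prym--Tjurin description of $J(X)$ from Theorem~\ref{J(X) as prym-tjurin}, and one additional cycle-level identity.

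\textbf{Step 1 (Abel--Jacobi identity).} For a point $[l_1]\in\tilde{C}_1$, the divisor $D_{21}([l_1])$ on $\tilde{C}_2$ is by construction $\sum[l_2]$, summed over lines $l_2$ meeting $C_2$ which also meet $l_1$. Viewing $l_1$ as a smooth curve of degree $1$, the pair secant relation of Theorem~\ref{main theorem} applied to $(l_1,C_2)$ reads
\[
q_2\cdot l_1\;+\;2\,C_2\;+\;\sum_{[l_2]\in D_{21}([l_1])}l_2\;\equiv\;\text{const}\quad\text{in }\mathrm{CH}_1(X).
\]
Applying $\psi_{\tilde{C}_2}$ and differencing at two base points of $\tilde{C}_1$, the fixed class of $C_2$ and the constant cancel, yielding
\[
\psi_{\tilde{C}_2}\circ D_{21}\;=\;-q_2\cdot\psi_{\tilde{C}_1}\colon\tilde{J}_1\longrightarrow J(X).
\]

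\textbf{Step 2 (Definition of $t_{21}$; part (b)).} By Theorem~\ref{J(X) as prym-tjurin}, each $\psi_{\tilde{C}_i}$ factors as $u_{C_i}\circ(i_i-1)$, with $u_{C_i}\colon(\mathrm{Pr}_i,\Xi_i)\xrightarrow{\sim}(J(X),\Theta_{J(X)})$ an isomorphism of principally polarized abelian varieties. Define
\[
t_{21}\;:=\;u_{C_2}^{-1}\circ u_{C_1}\colon(\mathrm{Pr}_1,\Xi_1)\xrightarrow{\sim}(\mathrm{Pr}_2,\Xi_2),
\]
manifestly an isomorphism of p.p.a.v.'s; the identity $u_{C_2}\circ t_{21}=u_{C_1}$ of part~(b) is then immediate.

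\textbf{Step 3 (Factorization through $i_1-1$; part (a)).} The substantive content of (a) is the cycle-level identity
\[
D_{21}\circ(i_1+q_1-1)\;=\;0\quad\text{in }\Hom(\tilde{J}_1,\tilde{J}_2),
\]
the mixed-curve analogue of the quadratic relation $(i_1-1)(i_1+q_1-1)=0$ of Theorem~\ref{J(X) as prym-tjurin}(a). I would prove it by mimicking the incidence/enumerative argument used there, analyzing the fiber product $D^{(1)}\times_{\tilde{C}_1}D_{21}$ (with $D^{(1)}\subset\tilde{C}_1\times\tilde{C}_1$ the incidence divisor defining $i_1$) and applying the pair secant relation to the resulting triples $(l_1,l_1',l_2)$. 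The same argument applied to the adjoint correspondence $D_{12}$ (which is $D_{21}^{\vee}$ under the canonical principal polarizations, noting that $i_j$ is self-adjoint) yields the dual identity $(i_2+q_2-1)\circ D_{21}=0$. Together these produce a factorization $D_{21}=\bar{t}\circ(i_1-1)$ with $\bar{t}\colon\mathrm{Pr}_1\to\mathrm{Pr}_2$. Restricting the identity of Step~1 to $\mathrm{Pr}_1$, where $(i_i-1)$ acts as $-q_i\cdot\mathrm{id}$ by the quadratic relation, gives $q_1q_2\,u_{C_2}(\bar{t}(z))=q_1q_2\,u_{C_1}(z)$; dividing by $q_1q_2$ (legitimate since $\ch(k)\nmid q_1q_2$) forces $\bar{t}=t_{21}$, so in particular the image of $D_{21}$ equals $t_{21}(\mathrm{Pr}_1)=\mathrm{Pr}_2$, completing (a).

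\textbf{Main obstacle.} The heart of the matter is the cycle-level identity $D_{21}\circ(i_1+q_1-1)=0$ in Step~3. The Abel--Jacobi identity of Step~1 alone only yields that $D_{21}\circ(i_1+q_1-1)$ has image in $\ker\psi_{\tilde{C}_2}=\ker(i_2-1)$ (up to finite kernel), not the genuine vanishing; strengthening this to a cycle-level identity requires a direct and careful enumerative computation on the triple-incidence configuration, exactly parallel to the proof of the quadratic relation $(i_1-1)(i_1+q_1-1)=0$ in the preceding theorem.
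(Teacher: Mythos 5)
Your Steps 1 and 2 are sound, and your overall architecture (reduce everything to the single identity $D_{21}\circ(i_1+q_1-1)=0$, then pin down the factored map via the Abel--Jacobi computation and the divisibility of $P_1$) would work; but the proof has a genuine gap exactly where you flag it: the identity $D_{21}\circ(i_1+q_1-1)=0$ is never proved, only asserted to follow from an unspecified triple-incidence computation. That computation is not what the paper does, and it is not clearly tractable as you describe it: even the quadratic relation $(i-1)(i+q-1)=0$ of Theorem \ref{J(X) as prym-tjurin} is not obtained there by a direct enumerative analysis, but by first proving the Abel--Jacobi identity $\eta_*\circ(i_0+q)=0$ (equation \eqref{condition b of kanev}) and then using the factorization $i_0=\eta^*\circ\lambda_I\circ\eta_*$ of the incidence correspondence through $\Alb(F)$ to promote it to a cycle-level identity. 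The same mechanism closes your gap in one line: over a dense open subset of $\tilde{C}_1$ the correspondence defining $D_{21}$ is the pullback $(\eta_1\times\eta_2)^*I$ (no diagonal correction is needed since a general line meeting $C_1$ does not meet $C_2$), so $D_{21}=\eta_2^*\circ\lambda_I\circ\eta_{1*}$ as a homomorphism $\tilde{J}_1\to\tilde{J}_2$, whence $D_{21}\circ(i_1+q_1-1)=\eta_2^*\circ\lambda_I\circ\bigl(\eta_{1*}\circ(i_1+q_1-1)\bigr)=0$ by \eqref{condition b of kanev} applied to $C_1$.

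The paper's actual route is different again: it applies Theorem \ref{J(X) as prym-tjurin}(a) to the disconnected admissible curve $C=C_1\cup C_2$ (with $q=q_1+q_2$) and writes the resulting relation $(D-1)(D+q-1)=0$ in $2\times 2$ block form \eqref{matrix equation}. The off-diagonal block gives $D_{21}(i_1+q-1)+(i_2-1)D_{21}=0$, which combined with the eigenspace decomposition $\tilde{J}_1=P_1+Q_1$ yields both $\mathrm{Im}(D_{21})\subset P_2$ and $D_{21}(i_1+q_1-1)=0$, while the diagonal blocks give $(D_{12}D_{21})|_{P_1}=q_1q_2$ and hence the invertibility of $t_{21}$ via $t_{12}$. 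Your device of defining $t_{21}:=u_{C_2}^{-1}\circ u_{C_1}$ does make part (b) and the compatibility with the polarizations immediate, which is a genuine simplification over the paper's diagram chase --- but only once the missing identity is supplied by one of the two arguments above.
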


We also get a description for the Abel-Jacobi map for a family of
1-cycles. Fix a general smooth rational curve $C\subset X$ such that
$\ch(k)\nmid q$. Let $\mathscr{C}\subset X\times T$ be a family of
curves on $X$ parameterized by $T$. Assume that for general $t\in
T$, there are finitely many secant lines $L_i$ of the pair
$(C,\mathscr{C}_t)$. Then the rule $ t\mapsto \sum [L_i]$ defines a
morphism
$$
\Psi_{T,\tilde{J}}:T\rightarrow \tilde{J}
$$
\begin{prop}
Let notations and assumptions be as above. Then the image of
$\Psi_{T,\tilde{J}}$ is contained in
$\mathrm{Pr}(\tilde{C},i)\subset\tilde{J}$. The composition
$u_C\circ\Psi_{T,\tilde{J}}$ is identified with the Abel-Jacobi map
associated to $\mathscr{C}/T$.
\end{prop}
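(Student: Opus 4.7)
The plan is to extract both parts of the statement from the second relation of the main theorem applied to the varying pair $(C,\mathscr{C}_t)$, in combination with the Prym-Tjurin description of $\psi_{\tilde{C}}$ from Theorem \ref{J(X) as prym-tjurin}(c). Fix a base point $t_0\in T$ and normalize all Abel-Jacobi maps to vanish at $t_0$. The second relation gives
$$q\mathscr{C}_t + 2\deg(\mathscr{C}_t)\,C + \sum_i L_i(t) \equiv \mathrm{const}\quad\text{in }\mathrm{CH}_1(X),$$
and since the middle term is independent of $t$, subtracting the base-point value yields the identity
$$\psi_{\tilde{C}}\circ\Psi_{T,\tilde{J}} = -q\,\Psi_T$$
in $J(X)$, where $\Psi_T$ denotes the classical Abel-Jacobi map of the family $\mathscr{C}/T$.

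Assuming the image containment $\Psi_{T,\tilde{J}}(T)\subseteq\mathrm{Pr}(\tilde{C},i)$, the identification $u_C\circ\Psi_{T,\tilde{J}}=\Psi_T$ then follows formally. By Theorem \ref{J(X) as prym-tjurin}(c), $\psi_{\tilde{C}}=u_C\circ(i-1)$, and the quadratic relation $(i-1)(i+q-1)=0$ forces $i$ to act as $1-q$, hence $i-1$ as $-q$, on $\mathrm{Pr}(\tilde{C},i)$. Thus
$$\psi_{\tilde{C}}\circ\Psi_{T,\tilde{J}} = u_C\circ(i-1)\circ\Psi_{T,\tilde{J}} = -q\cdot u_C\circ\Psi_{T,\tilde{J}},$$
and comparing with the previous identity gives the required $u_C\circ\Psi_{T,\tilde{J}}=\Psi_T$ (the residual $q$-torsion discrepancy is killed by $\ch(k)\nmid q$ together with the rigidity of morphisms into abelian varieties).

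The main obstacle is therefore the image containment. Since $\ch(k)\nmid q$, the subvariety $\mathrm{Pr}(\tilde{C},i)=\mathrm{Im}(i-1)$ is isogenous to $\ker(i+q-1)^{0}$, so the claim reduces, after normalization, to $(i+q-1)\circ\Psi_{T,\tilde{J}}=0$. The natural route is to apply the second relation of the main theorem to each pair $(C,L_i(t))$, treating $L_i(t)$ as a rational curve of degree one, and sum over $i$; via the correspondence $D\subset\tilde{C}\times\tilde{C}$ this formally produces a candidate lift of the first identity above from $J(X)$ to $\tilde{J}$. The obstruction is that each $L_i(t)$ meets $C$ at a point $P_i(t)$, violating the generic-position hypothesis of the relation; the correct form must include a correction term counting the finitely many lines on $X$ through $P_i(t)$. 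One must then verify that the aggregate correction, summed over $i$, represents a $t$-independent class on $\tilde{C}$. I expect this correction-term analysis to be the most delicate step, and would approach it either by a degeneration argument (perturbing the $L_i(t)$ off $C$, applying the generic form of the relation, and specializing) or by a direct computation on the Fano surface $F$, exploiting the fact that $[\Delta_{\mathscr{C}_t}]\in\mathrm{NS}(F)$ is constant in a connected algebraic family, where $\Delta_{\mathscr{C}_t}\subset F$ denotes the curve of lines meeting $\mathscr{C}_t$.
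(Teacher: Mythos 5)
Your second half --- deriving $u_C\circ\Psi_{T,\tilde J}=\Psi_T$ from the relation $2e_tC+q\,\mathscr{C}_t+\sum L_i(t)=\mathrm{const.}$, the factorization $\psi_{\tilde J}=u_C\circ(i-1)$, and the fact that $i-1$ acts by $-q$ on $\mathrm{Pr}(\tilde C,i)$ --- is correct, and is a legitimate variant of what the paper does (the paper instead packages this into the inverse map $v_C=u_C^{-1}$ constructed in the two corollaries immediately preceding the proposition). But the image containment, which you yourself flag as the main obstacle, is left genuinely open in your proposal, and the route you sketch for it is not the one that works. Applying relation (b) of Theorem \ref{main theorem} to the pairs $(C,L_i(t))$ runs into exactly the degeneracy you notice (each $L_i(t)$ meets $C$), and you do not carry out the ``correction-term analysis''; as written, this part is a plan rather than a proof.

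The paper closes this gap with a device you do not use: the containment is already established, for differences of lines, in the corollary asserting that $\sum[L_i]-\sum[L'_i]\in\mathrm{Pr}(\tilde C,i)$ for any two lines $L,L'$ disjoint from $C$. The proof there is to choose an auxiliary admissible rational curve $C_1$ meeting both $L$ and $L'$ transversally in one point, observe that $\sum[L_i]-\sum[L'_i]=D_{21}([L]-[L'])$ for the mixed incidence homomorphism $D_{21}:\tilde J_1\to\tilde J$, and invoke the preceding proposition, which shows $\mathrm{Im}(D_{21})=\mathrm{Pr}(\tilde C,i)$ via the matrix form of the quadratic relation for $C_1\cup C$. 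Since $\mathrm{CH}_1(X)$ is generated by lines (Corollary \ref{chow group}), this yields the map $v'_C:\mathrm{A}_1(X)\to\mathrm{Pr}(\tilde C,i)$, and the proposition follows from $\Psi_{T,\tilde J}(t)=v'_C(\mathscr{C}_t-\mathscr{C}_{t_0})$ together with $u_C\circ v_C=1$. If you insist on your reduction to $(i+q-1)\circ\Psi_{T,\tilde J}=0$, you would additionally need the exact identification $\ker(i+q-1)^0=\mathrm{Im}(i-1)$ (an isogeny is not enough to conclude membership) plus connectedness of $T$; the auxiliary-curve argument renders all of this unnecessary.
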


We summarize the structure of the paper. In section 2, we review the
theory of secant bundles on symmetric products. We state the results
in the form that we are going to use and all proofs are included for
completeness. Section 3 is devoted to the construction and basic
properties of residue surfaces associated to a curve or a pair of
curves. Those surfaces play an important role in later proofs in
section 4. In section 4, we state and prove our main theorem on
relations among 1-cycles. We also derive Paranjape's theorem as a
corollary. Section 5 is a review on basic results on Fano scheme of
lines and the intermediate jacobian of a cubic threefold. In section
6, we show how one can realize the intermediate jacobian of a cubic
threefold
as a Prym-Tjurin variety described above.\\

\indent \textit{Acknowledgement}. The author would like to thank C.
Vial for many interesting discussions, B. Totaro  for reading the
manuscript and R. Friedman for informing the author of many useful
references. The author thanks the referee, whose suggestions
significantly improved the exposition of this paper. This work was
carried out during the author's first year of Simons Postdoctoral
Fellowship. He would like to thank Simons
Foundation for this support.\\

\textbf{Notations and conventions:} \\
1. $\mathrm{G}(r,V)$ is the Grassmannian rank $r$ quotient of $V$;
$\mathscr{E}_r$ is the canonical rank $r$ quotient bundle of $V$ on
$\mathrm{G}(r,V)$;\\
2. $\PP(V)=\Proj(\Sym^*V)=\mathrm{G}(1,V)$;\\
3. $\mathrm{G}(r_1,r_2,V)$ is the flag variety of successive
quotients of $V$ with ranks $r_2>r_1$;\\
4. $N(e,g)=\frac{5e(e-3)}{2}+6-6g$;\\
5. We use $\equiv$ to denote numerical equivalence; \\
6. A general curve on $X$ means that it comes from a dense open
subset of the corresponding component of the Hilbert scheme of curves on $X$; \\
7. For two algebraic cycles $D_1$ and $D_2$, an equation $D_1=D_2$
can mean equality either as cycles or modulo rational equivalence;\\
8. For $P,Q\in\PP(V)$ distinct, we use $\overline{PQ}$ to denote the
line through both $P$ and $Q$.

\section{Secant bundles on symmetric products}
Most of the results in this section are special cases of that of
\cite{matt}. We state the results in the form we need and include
the proofs for completeness. Let $C$ be a smooth projective curve
over an algebraically closed field $k$. We use $C^{(2)}$ to denote
the symmetric product of $C$. Let $\pi:C\times C\rightarrow C^{(2)}$
be the canonical double cover which ramifies along the diagonal
$\Delta_C\subset C\times C$. For any invertible sheave $\mathscr{L}$
on $C$, we define the \textit{symmetrization}
$\mathscr{E}(\mathscr{L})$ of $\mathscr{L}$ to be
$\pi_*p_1^*\mathscr{L}$ on $C^{(2)}$, where $p_1:C\times
C\rightarrow C$ is the projection to the first factor. For example,
if $\mathscr{L}=\omega_C$ then
$\mathscr{E}(\mathscr{L})\cong\Omega_{C^{(2)}}^1$.

To compute the Chern classes of $\mathscr{E}(\mathscr{L})$, we
describe several special cycles on $C^{(2)}$. Let $x\in C$ be a
closed point. We define
\begin{equation}
D_x=\pi_*(\{x\}\times C)\subset C^{(2)}
\end{equation}
If $\mathfrak{a}=\sum x_i$ is an effective divisor on $C$ with
$x_i\neq x_j$ for different $i$ and $j$, we write
\begin{equation}\label{first notations}
D_{\mathfrak{a}}=\sum D_{x_i}, \quad \mathfrak{a}^{[2]}=\sum_{i<j}
\pi_*(x_i,x_j),\quad \delta_*\mathfrak{a}=\sum \pi_*(x_i,x_i)
\end{equation}
Here $\delta=\pi\circ \Delta_C:C\rightarrow C^{(2)}$. Note that the
definition of $D_\mathfrak{a}$ can be extended linearly to all
divisors. The projection formula implies
\begin{align*}
D_x\cdot D_y & =\pi_*(x\times C \cdot \pi^*\pi_*(y\times C))\\
&=\pi_*(x\times C\cdot (y\times C+C\times y))\\
&=\pi_*(x,y)
\end{align*}
for any $x,y\in C$. Then we easily get
\begin{equation}
(D_\mathfrak{a})^2=\delta_*\mathfrak{a}+2\,\mathfrak{a}^{[2]}
\end{equation}
Let $\Delta_0=c_1(\pi_*\calO_{C\times C})$ in
$\mathrm{CH}_1(C^{(2)})$.

Assume that $\mathscr{L}$ is very ample and for some
$V\subset\mathrm{H}^0(C,\mathscr{L})$, it defines a closed immersion
$i:C\rightarrow \PP(V)$. Note that any two points $x$ and $y$ on $C$
define a line $\overline{i(x)\,i(y)}$ (the tangent line at $i(x)$ if
$x=y$). This defines a natural morphism $\varphi:C^{(2)}\rightarrow
\mathrm{G}(2,V)$. Let
$V\otimes\calO_{\mathrm{G}(2,V)}\twoheadrightarrow\mathscr{E}_2$ be
the universal rank 2 quotient bundle.
\begin{prop}
We have an isomorphism $\mathscr{E}(\mathscr{L})\cong
\varphi^*\mathscr{E}_2$.
\end{prop}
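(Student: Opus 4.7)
The plan is to exhibit a rank-$2$ locally free quotient of $V\otimes\calO_{C^{(2)}}$ whose classifying map to $\mathrm{G}(2,V)$ is $\varphi$ and whose underlying sheaf is $\mathscr{E}(\mathscr{L})$; the isomorphism then follows from the universal property of the Grassmannian. As a preliminary, I would note that $\pi\colon C\times C\to C^{(2)}$ is finite and flat of degree $2$ (since $C^{(2)}$ is smooth and $\pi$ has constant fibre length), so $\mathscr{E}(\mathscr{L})=\pi_*p_1^*\mathscr{L}$ is locally free of rank $2$. Its fibre at a point $\{x,y\}\in C^{(2)}$ is canonically $\mathrm{H}^0(x+y,\mathscr{L}|_{x+y})$, which equals $\mathscr{L}_x\oplus\mathscr{L}_y$ when $x\neq y$ and the length-$2$ jet of $\mathscr{L}$ at $x$ when $x=y$.

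Next I would build the evaluation map. The inclusion $V\subset \mathrm{H}^0(C,\mathscr{L})$ pulls back along $p_1$ to $V\subset \mathrm{H}^0(C\times C,p_1^*\mathscr{L})=\mathrm{H}^0(C^{(2)},\mathscr{E}(\mathscr{L}))$, and by adjunction yields
$$
\mathrm{ev}\colon V\otimes\calO_{C^{(2)}}\longrightarrow\mathscr{E}(\mathscr{L}).
$$
Surjectivity of $\mathrm{ev}$ is checked fibrewise and reduces to the statement that the restriction $V\to \mathrm{H}^0(x+y,\mathscr{L}|_{x+y})$ is surjective for every length-$2$ subscheme $x+y$ of $C$. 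But this is exactly the condition that $V$ separate points and tangent vectors on $C$, which is equivalent to the hypothesis that $V$ defines a closed immersion $i\colon C\hookrightarrow \PP(V)$.

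By the universal property of $\mathrm{G}(2,V)$, the rank-$2$ surjection $\mathrm{ev}$ corresponds to a unique morphism $\psi\colon C^{(2)}\to\mathrm{G}(2,V)$ with $\psi^*\mathscr{E}_2\cong\mathscr{E}(\mathscr{L})$, and it remains to verify $\psi=\varphi$. For $\{x,y\}$ with $x\neq y$, the quotient $V\twoheadrightarrow \mathscr{L}_x\oplus\mathscr{L}_y$ has as kernel the codimension-$2$ subspace of sections vanishing at both $x$ and $y$; the corresponding line in the convention $\PP(V)=\mathrm{G}(1,V)$ is precisely $\overline{i(x)\,i(y)}$. In the diagonal case, the rank-$2$ quotient $V\to \mathrm{H}^0(2x,\mathscr{L}|_{2x})$ cuts out the tangent line to $i(C)$ at $i(x)$, matching $\varphi(\{x,x\})$. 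The only real obstacle is checking compatibility at the ramification divisor $\delta(C)\subset C^{(2)}$, but this is handled uniformly by the length-$2$-subscheme description of the fibres of $\mathscr{E}(\mathscr{L})$, which meshes with the standard limiting description of $\varphi$ along the diagonal.
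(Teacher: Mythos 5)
Your proof is correct, but it runs in the opposite direction from the paper's. You start from the sheaf $\mathscr{E}(\mathscr{L})=\pi_*p_1^*\mathscr{L}$, observe it is locally free of rank $2$ with fibre $\mathrm{H}^0(x+y,\mathscr{L}|_{x+y})$, exhibit the evaluation surjection $V\otimes\calO_{C^{(2)}}\to\mathscr{E}(\mathscr{L})$ (surjective precisely because $V$ separates points and tangent vectors, i.e.\ because $i$ is a closed immersion), and then invoke the universal property of $\mathrm{G}(2,V)$; identifying the classifying map with $\varphi$ is a pointwise kernel computation. The paper instead starts from $\varphi$, forms $P=\PP(\varphi^*\mathscr{E}_2)$ with its map $f$ to $\PP(V)$, writes $\varphi^*\mathscr{E}_2=\tilde\pi_*f^*\calO_{\PP(V)}(1)$, and identifies this with $\pi_*p_1^*\mathscr{L}$ by restricting to the divisor $D\cong C\times C$ of distinguished points on the fibres and pushing forward the sequence $0\to f^*\calO(1)(-D)\to f^*\calO(1)\to f^*\calO(1)|_D\to 0$, using that the kernel is $\calO_{\PP^1}(-1)$ on each fibre. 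Your route is more intrinsic and arguably tidier: it needs no relative vanishing argument, and as a bonus it actually \emph{constructs} $\varphi$ as a morphism (including along the diagonal), which the paper only asserts. What the paper's route buys is the explicit geometric package $(P,f,D)$ with $f|_D=i\circ p_1$ and $\tilde\pi|_D=\pi$, which is not incidental: Section~3 reuses exactly this diagram to define the residue surface via $\divisor(f^*s)=D+S$, so the longer proof doubles as setup for everything that follows. Both arguments are valid in all characteristics; your appeal to flatness of $\pi$ (finite with constant fibre length onto the smooth surface $C^{(2)}$) is sound.
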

\begin{proof}
If we view $\mathrm{G}(2,V)$ as the space on lines in $\PP(V)$, then
by pulling back the universal family of lines, we get the following
diagram
$$
\xymatrix{
  \PP(\varphi^*\mathscr{E}_2)\ar[d]_{\tilde{\pi}}\ar[r]^f &\PP(V)\\
  C^{(2)} &
}
$$
and we have $\varphi^*\mathscr{E}_2\cong
\tilde{\pi}_*f^*\calO_{\PP(V)}(1)$.  The fiber $\tilde\pi^{-1}(t)$
is the line $\varphi(t)\in\mathrm{G}(2,V)$ and $f$ is the inclusion
of the line into $\PP(V)$. Note that there are two distinguished
points $x$ and $y$ on the fiber
$\tilde{\pi}^{-1}(t)=\overline{i(x)\,i(y)}$, where $t\in C^{(2)}$ is
the point $\pi(x,y)$. Those distinguished points form a divisor $D$
on $\PP(\varphi^*(\mathscr{E}_2))$. By choosing an isomorphism
$D\cong C\times C$, we can assume that $f|_D=i\circ p_1$ and
$\tilde{\pi}|_D=\pi$. Then $f^*\calO_{\PP(V)}(1)|_D
=p_1^*\,i^*\calO_{\PP(V)}(1)=p_1^*\mathscr{L}$. Consider the
following short exact sequence
$$
\xymatrix@C=0.5cm{
  0 \ar[r] & f^*\calO_{\PP(V)}(1)\otimes\calO_{\PP(\varphi^*\mathscr{E}_2)}(-D) \ar[rr] &&f^*\calO_{\PP(V)}(1) \ar[rr] && f^*\calO_{\PP(V)}(1)|_D \ar[r] & 0 }
$$
Apply $R\pi_*$ and note that the left term restricts to
$\calO_{\PP^1}(-1)$ on each fiber of $\tilde{\pi}$, we get $
\varphi^*\mathscr{E}_2=\tilde{\pi}_*f^*\calO_{\PP(V)}(1)\cong
\pi_*p_1^*\mathscr{L}=\mathscr{E}(\mathscr{L}). $
\end{proof}

\begin{cor}
We have the equality
$c_2(\mathscr{E}(\mathscr{L}))=\mathfrak{a}^{[2]}$ in
$\mathrm{CH}_0(C^{(2)})$, where $\mathfrak{a}$ is a general element
of the complete linear system $|\mathscr{L}|$.
\end{cor}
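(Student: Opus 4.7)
The plan is to represent $c_2(\mathscr{E}(\mathscr{L}))$ as the class of the scheme-theoretic zero locus of a generic global section of the rank $2$ bundle $\mathscr{E}(\mathscr{L})$, and to identify that zero locus with $\mathfrak{a}^{[2]}$ by examining the situation on and off the diagonal $\delta(C) \subset C^{(2)}$ separately.

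The first step is to lift a section: since $\pi$ is finite affine, adjunction gives $H^0(C^{(2)}, \mathscr{E}(\mathscr{L})) = H^0(C \times C, p_1^*\mathscr{L}) = H^0(C, \mathscr{L})$. For a general $s \in H^0(C, \mathscr{L})$ (which exists by the very-ampleness hypothesis of the preceding proposition), Bertini yields a reduced divisor $\mathfrak{a} = \sum_{i=1}^d x_i$ with pairwise distinct simple zeros; let $\tilde{s}$ be the corresponding section of $\mathscr{E}(\mathscr{L})$.

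Next comes the off-diagonal analysis. Away from $\delta(C)$, $\pi$ is \'etale, and the fiber of $\mathscr{E}(\mathscr{L})$ over $\pi(x, y)$ with $x \ne y$ splits canonically as $\mathscr{L}_x \oplus \mathscr{L}_y$, with $\tilde{s}(\pi(x, y)) = (s(x), s(y))$. This vanishes exactly when $x$ and $y$ are distinct elements of $\mathfrak{a}$, and the vanishing is transverse; this portion of $Z(\tilde{s})$ is therefore the reduced $0$-cycle $\sum_{i<j} \pi_*(x_i, x_j) = \mathfrak{a}^{[2]}$.

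The third step is the diagonal analysis. At $\delta(x)$ with $x \in \mathfrak{a}$, trivialize $\mathscr{L}$ near $x$ by a local frame $e$ so that $s = t\cdot e$ for a local parameter $t$, and use $(t_1, t_2)$ on $C \times C$ near $(x, x)$ together with $u = t_1 + t_2$, $v = t_1 t_2$ on $C^{(2)}$ near $\delta(x)$. Then $\pi_*p_1^*\mathscr{L}$ is free of rank $2$ over $\mathcal{O}_{C^{(2)},\delta(x)} = k[[u, v]]$ on the basis $\{e, t_1 e\}$, and $\tilde{s} = p_1^* s = t_1 e$ is the second basis vector, hence nonzero in the fiber $\mathscr{E}(\mathscr{L}) \otimes k(\delta(x))$. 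So $\tilde{s}$ does not vanish at $\delta(x)$, and the diagonal contributes nothing. Combining the two regions, $Z(\tilde{s}) = \mathfrak{a}^{[2]}$ as $0$-cycles; since a regular section of a rank $2$ bundle on a smooth surface represents $c_2$, this yields $c_2(\mathscr{E}(\mathscr{L})) = \mathfrak{a}^{[2]}$ in $\mathrm{CH}_0(C^{(2)})$. The one genuinely delicate point is the diagonal check, where one must see that despite $s(x)=0$ the section $\tilde{s}$ survives to a nonzero element of the rank $2$ fiber; the remaining off-diagonal argument is a routine transversality computation.
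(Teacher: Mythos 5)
Your argument is correct, and it takes a genuinely different route from the paper's. The paper leans on the preceding proposition $\mathscr{E}(\mathscr{L})\cong\varphi^*\mathscr{E}_2$ together with Schubert calculus: $c_2(\mathscr{E}_2)$ is represented by the Schubert cycle of lines contained in a general hyperplane $H$, and pulling this back along $\varphi$ gives exactly the pairs of points of $\mathfrak{a}=i^*H$, i.e.\ $\mathfrak{a}^{[2]}$ --- a three-line computation once the identification of bundles and the representative of $c_2(\mathscr{E}_2)$ are granted. You instead bypass both ingredients and compute the zero scheme of the tautological section $\tilde{s}\in\mathrm{H}^0(C^{(2)},\pi_*p_1^*\mathscr{L})=\mathrm{H}^0(C,\mathscr{L})$ directly; the two are secretly the same section (the Schubert cycle for $H=Z(s)$ is the zero locus of the induced section of $\mathscr{E}_2$), but your proof makes no use of the Grassmannian at all. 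What you gain is self-containedness and a bit more information: your local analysis shows the zero scheme is reduced and supported off the diagonal, so the identity holds at the level of $0$-cycles and not just classes, and you correctly isolate the only delicate point, namely that at $\delta(x)$ with $x\in\mathfrak{a}$ the section $t_1e$ is a unit-vector in the rank-$2$ fiber with basis $\{e,t_1e\}$ even though $s(x)=0$. What the paper's route buys is brevity and the reuse of the structural isomorphism with $\varphi^*\mathscr{E}_2$, which it needs anyway in the rest of Section 2. Two tiny points to tidy up: you should note in passing that $\tilde{s}$ is also nonvanishing at diagonal points $\delta(y)$ with $y\notin\mathfrak{a}$ (immediate, since $s(y)\neq0$ already makes the first coordinate a unit), so that the zero locus really is exhausted by your two cases; and $\mathcal{O}_{C^{(2)},\delta(x)}$ should strictly be its completion $k[[u,v]]$, which is harmless for checking nonvanishing of a section at a point.
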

\begin{proof}
It is known from Schubert calculus (see \cite{ful} 14.7) that
$c_2(\mathscr{E}_2)$ is represented by the cycle defined by the
space of all lines that are contained in a hyperplane
$H\subset\PP(V)$. When $H$ is chosen to be general, then
$\mathfrak{a}=i^*H$ is an element of $|\mathscr{L}|$ such that
$\mathfrak{a}^{[2]}$ is defined. Then we have $
c_2(\mathscr{E}(\mathscr{L}))=c_2(\varphi^*\mathscr{E}_2)
=\varphi^*c_2(\mathscr{E}_2)=\mathfrak{a}^{[2]}$.
\end{proof}

For very ample $\mathscr{L}$, we take a general section
$s\in\mathrm{H}^0(C,\mathscr{L})$ write
$\mathfrak{a}=\divisor(s)=\sum_{i=1}^{d} x_i$, where
$d=\deg\mathscr{L}$. The section $s$ defines a short exact sequence
$$
 \xymatrix@C=0.5cm{
   0 \ar[r] & \calO_C \ar[rr]^{s} && \mathscr{L} \ar[rr] && \oplus_{i=1}^{d}\kappa(x_i) \ar[r] & 0 }
$$
Pull the sequence back to $C\times C$ via $p_1^*$ and then apply
$R\pi_*$, we get
$$
 \xymatrix@C=0.5cm{
   0 \ar[r] & \pi_*(\calO_{C\times C}) \ar[rr] && \mathscr{E}(\mathscr{L}) \ar[rr] && \oplus_{i=1}^{d}\calO_{D_{x_i}} \ar[r] & 0 }
$$
Since $\pi_*\calO_{C\times C}$ admits a nowhere vanishing section
``1", the second Chern class of $\pi_*\calO_{C\times C}$ is zero.
Hence we have $c(\pi_*\calO_{C\times C})=1+\Delta_0$. Take total
Chern classes in the above exact sequence and note that
$c(\calO_{D_i})=\frac{1}{1-D_i}$, we get
\begin{align*}
c(\mathscr{E}(\mathscr{L}))
&=(1+\Delta_0)\prod_{i=1}^{d}(\dfrac{1}{1-D_{x_i}})\\
&=(1+\Delta_0)\prod_{i=1}^{d}(1+D_{x_i}+D_{x_i}^2)\\
&=(1+\Delta_0)(1+\sum_{i=1}^{d}D_{x_i}+\sum_{i=1}^{d}D_{x_i}^2
+\sum_{1\leq i<j\leq d} D_{x_i}\cdot D_{x_j})\\
&=1 + (D_\mathfrak{a}+\Delta_0) + (\Delta_0\cdot
D_\mathfrak{a}+\delta_*{\mathfrak{a}} +\mathfrak{a}^{[2]})
\end{align*}
Hence we get
\begin{equation}\label{ample}
c_1(\mathscr{E}(\mathscr{L}))=D_\mathfrak{a}+\Delta_0,\quad
c_2(\mathscr{E}(\mathscr{L}))=\mathfrak{a}^{[2]}+\Delta_0\cdot
D_\mathfrak{a} +\delta_*\mathfrak{a}
\end{equation}
Compare this with the above Corollary, we have the following
\begin{equation}
\Delta_0\cdot D_{\mathfrak{a}}=-\delta_*\mathfrak{a}
\end{equation}

Now let $\mathscr{L}_1$ and $\mathscr{L}_2$ be two very ample line
bundles on $C$ with degrees $d_1$ and $d_2$ respectively. Pick a
general section $s_i\in\mathrm{H}^0(C,\mathscr{L}_i)$ where $i=1,2$.
Set
$$
\mathfrak{a}_1=\divisor(s_1)=\sum_{i=1}^{d_1}x_i, \quad
\mathfrak{a}_2=\divisor(s_2)=\sum_{i=1}^{d_2}y_i
$$
As before, we can use $s_2$ to construct the following short exact
sequence
$$
\xymatrix@C=0.5cm{
  0 \ar[r] & \mathscr{L}_1\otimes\mathscr{L}_2^{-1} \ar[rr]^{\quad s_2} && \mathscr{L}_1 \ar[rr] && \oplus_{i=1}^{d_2}\kappa(y_i) \ar[r] & 0 }
$$
Similarly, after applying $R\pi_*$, we get a short exact sequence on
$C^{(2)}$,
$$
\xymatrix@C=0.5cm{
  0 \ar[r] & \mathscr{E}(\mathscr{L}_1\otimes\mathscr{L}_2^{-1}) \ar[rr] && \mathscr{E}(\mathscr{L}_1) \ar[rr]
  && \oplus_{i=1}^{d_2}\calO_{D_{y_i}} \ar[r] & 0 }
$$
One easily computes the Chern classes as follows
\begin{align*}
c(\mathscr{E}(\mathscr{L}_1\otimes\mathscr{L}_2^{-1}))
&=(1+D_{\mathfrak{a}_1}+\Delta_0 +
\mathfrak{a}_1^{[2]})\prod_{i=1}^{d_2}(1-D_{y_i})\\
 &=(1+D_{\mathfrak{a}_1}+\Delta_0
 +\mathfrak{a}_1^{[2]})(1-D_{\mathfrak{a}_2}+\mathfrak{a}_2^{[2]})\\
 &=1 + (\Delta_0+D_{\mathfrak{a}_1}-D_{\mathfrak{a}_2}) +
 (\mathfrak{a}_1^{[2]}+\mathfrak{a}_2^{[2]} -\Delta_0\cdot D_{\mathfrak{a}_2} -D_{\mathfrak{a}_1}\cdot D_{\mathfrak{a}_2})
\end{align*}

We summarize the above discussion into the following
\begin{thm}\label{secant bundle}
Let $C$ be a smooth projective curve and $\mathscr{L}$ be a
invertible sheaf. Let $\mathfrak{a}_1$, $\mathfrak{a}_2$ and
$\mathfrak{a}$ be effective divisors on $C$ such that each is a sum
of distinct points with multiplicity 1. Let
$\mathscr{E}=\mathscr{E}(\mathscr{L})$ be the symmetrization of
$\mathscr{L}$. Then the following are true\\
\indent(a) If $\mathscr{L}\cong\calO_C(\mathfrak{a}_1 -
\mathfrak{a}_2)$, then the following equalities hold in the Chow
ring of $C^{(2)}$,
\begin{align}
 c_1(\mathscr{E}) & =\Delta_0+ D_{\mathfrak{a}_1} -
 D_{\mathfrak{a}_2}\\
 c_2(\mathscr{E}) & =\mathfrak{a}_1^{[2]} +\mathfrak{a}_2^{[2]} +
 \delta_*\mathfrak{a}_2 - D_{\mathfrak{a}_1}\cdot D_{\mathfrak{a}_2}
\end{align}
\indent(b) The following identities hold in the Chow ring of
$C^{(2)}$,
\begin{align}
D_{\mathfrak{a}}\cdot D_{\mathfrak{a}} &= \delta_*\mathfrak{a}
+2\,\mathfrak{a}^{[2]},\\
D_{\mathfrak{a}}\cdot\Delta_0 & = -\delta_*\mathfrak{a}\\
2\,\Delta_0 & =-\pi_*(\Delta_C)\\
2\Delta_0\cdot\Delta_0 & = -\delta_*{K_C}
\end{align}
\end{thm}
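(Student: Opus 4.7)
The theorem is essentially an accounting of identities that have surfaced piecewise in the preceding computation, and the plan is to isolate the outstanding pieces and verify them. For part (a), the two Chern class formulas have been derived under the assumption that $\mathscr{L}_1 = \calO_C(\mathfrak{a}_1)$ and $\mathscr{L}_2 = \calO_C(\mathfrak{a}_2)$ are both very ample. To reach an arbitrary $\mathscr{L} \cong \calO_C(\mathfrak{a}_1 - \mathfrak{a}_2)$, I would pick an auxiliary effective divisor $\mathfrak{c}$ of distinct points, disjoint from $\mathfrak{a}_1 \cup \mathfrak{a}_2$, so that both $\calO_C(\mathfrak{a}_i + \mathfrak{c})$ become very ample, and apply the established formulas to $\mathfrak{a}_i' = \mathfrak{a}_i + \mathfrak{c}$. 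A direct expansion using $D_{\mathfrak{a}_i'} = D_{\mathfrak{a}_i} + D_\mathfrak{c}$, $(\mathfrak{a}_i')^{[2]} = \mathfrak{a}_i^{[2]} + \mathfrak{c}^{[2]} + D_{\mathfrak{a}_i} \cdot D_\mathfrak{c}$, and $D_\mathfrak{c}^2 = \delta_* \mathfrak{c} + 2\mathfrak{c}^{[2]}$ causes all $\mathfrak{c}$-dependent contributions to cancel and returns the stated formulas.

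For part (b), the first two identities have already been verified above: $D_\mathfrak{a}^2 = \delta_* \mathfrak{a} + 2\mathfrak{a}^{[2]}$ by a direct application of the projection formula, and $\Delta_0 \cdot D_\mathfrak{a} = -\delta_* \mathfrak{a}$ by comparing the two expressions for $c_2(\mathscr{E}(\mathscr{L}))$ in the very ample case. For $2\Delta_0 = -\pi_*(\Delta_C)$, I would invoke the structure theorem for a double cover: $\pi$ is flat of degree $2$, ramified along $\Delta_C$ with $\pi|_{\Delta_C}$ an isomorphism onto its image, so $\pi_* \calO_{C \times C}$ splits as $\calO_{C^{(2)}} \oplus \mathscr{M}$ with $\mathscr{M}^{\otimes 2} \cong \calO_{C^{(2)}}(-\pi_* \Delta_C)$, and taking first Chern classes yields the identity.

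For the last identity, I would square $2\Delta_0 = -\pi_*\Delta_C$ and use the projection formula together with $\pi^* \pi_* \Delta_C = 2\Delta_C$ to reduce $(\pi_*\Delta_C)^2$ to $2\pi_*(\Delta_C^2)$. The self-intersection formula then gives $\Delta_C^2 = \Delta_{C,*}(c_1(N_{\Delta_C/C \times C})) = -\Delta_{C,*} K_C$, and pushing forward along $\pi \circ \Delta_C = \delta$ yields $4\Delta_0^2 = -2\delta_* K_C$, which is the desired formula. The only genuinely new input beyond the earlier calculations is the double-cover identification of $\mathscr{M}$, which I expect to be the most substantive step; everything else is bookkeeping via projection and self-intersection.
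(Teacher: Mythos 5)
Your proposal is correct in outline and identifies exactly the right division of labour: part (a) and the first two identities of (b) are already contained in the preceding computation, and the real content is the pair of identities involving $\Delta_0$. Your extra twisting argument for (a) --- adding an auxiliary divisor $\mathfrak{c}$ to both $\mathfrak{a}_1$ and $\mathfrak{a}_2$ and checking that all $\mathfrak{c}$-terms cancel --- is a genuine improvement in care over the paper, which simply ``summarizes the above discussion'' without addressing the case where $\calO_C(\mathfrak{a}_i)$ fails to be very ample. Your derivation of $2\Delta_0\cdot\Delta_0=-\delta_*K_C$ by squaring $2\Delta_0=-\pi_*\Delta_C$, using $\pi^*\pi_*\Delta_C=2\Delta_C$ and the self-intersection formula $\Delta_C^2=-\Delta_{C,*}K_C$, is equivalent to the paper's one-line computation $2\Delta_0\cdot\Delta_0=-\pi_*\Delta_C\cdot\Delta_0=\pi_*(\Delta_C\cdot\Delta_C)$.

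The one step where your route genuinely diverges, and where there is a (repairable) gap, is the identity $2\Delta_0=-\pi_*\Delta_C$. You invoke the structure theorem for double covers downstairs: $\pi_*\calO_{C\times C}\cong\calO_{C^{(2)}}\oplus\mathscr{M}$ with $\mathscr{M}^{\otimes 2}\cong\calO(-\pi_*\Delta_C)$, together with the claim that $\pi|_{\Delta_C}$ is an isomorphism onto its image. All three assertions use $\ch(k)\neq 2$: the splitting is produced by $\tfrac12\mathrm{Tr}$, the usual square-root description of $\mathscr{M}$ lifts a generator of $\mathscr{M}$ to the anti-invariant element $x-\tfrac12 s_1$, and in characteristic $2$ the restriction $\pi|_{\Delta_C}$ is purely inseparable of degree $2$ onto its image, so even the cycle $\pi_*\Delta_C$ is not the reduced branch curve. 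Since the paper insists that its relations among $1$-cycles hold in all characteristics, and Theorem 2.3 is stated with no restriction on $\ch(k)$, this matters. The paper avoids the issue entirely by working upstairs: the exact sequence $0\to\calO_{C\times C}\to\pi^*\pi_*\calO_{C\times C}\to\calO_{C\times C}(-\Delta_C)\to 0$ gives $\pi^*\Delta_0=-\Delta_C$ in any characteristic (the quotient is identified with $\calO(-\Delta_C)$ via $f\mapsto\epsilon(f)-\sigma^*\epsilon(f)$, which locally sends the generator to $x-y$), and then $2\Delta_0=\pi_*\pi^*\Delta_0=-\pi_*\Delta_C$ by the projection formula. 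Your argument can be salvaged in characteristic $2$ by replacing the naive square-root isomorphism with the discriminant of the trace form on $\pi_*\calO_{C\times C}$, but as written it proves the identity only away from characteristic $2$.
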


\begin{proof}
Only the last two equalities need to be proved. By construction we
have the following exact sequence
$$
\xymatrix@C=0.5cm{
  0 \ar[r] & \calO_{C\times C} \ar[rr] && \pi^*\pi_*\calO_{C\times C} \ar[rr] && \calO_{C\times C}(-\Delta_C) \ar[r] & 0 }
$$
This implies that $\pi^*\Delta_0 = c_1(\pi^*\pi_*\calO_{C\times
C})=-\Delta_C$. Hence $-\pi_*\Delta_C=\pi_*\pi^*\Delta_0=2\Delta_0$
and by projection formula we have
$2\Delta_0\cdot\Delta_0=-\pi_*\Delta_C\cdot\Delta_0=\pi_*(\Delta_C\cdot\Delta_C)=-\delta_*K_C$.
\end{proof}

\section{Residue surfaces}
Let $X\subset\PP^n$, $n\geq 4$, be a smooth cubic hypersurface over
an algebraically field $k$. In this section we will construct so
called residue surfaces associated to a curve or a pair of curves on
$X$. Let $V=\mathrm{H}^0(X,\calO_X(1))$.

Let $C\subset X$ be a complete at worst nodal curve on $X$. Note
that $C$ might be disconnected. If $x\in C$ is a nodal point, let
$\Pi_x$ be the plane spanned by the tangent directions of the two
branches at $x$.

\begin{defn}\label{defn of secant line}
A line $l$ on $X$ is called a \textit{secant line} of $C$ if, (first
type) it meets $C$ in two smooth points (which might be
infinitesimally close to each other); or (second type) it passes
through a node $x$ and lies in the plane $\Pi_x$ and also it is not
a component of $C$; or (third type) it is a component of $C$ for
some node $x\in l$ we have $\Pi_x\cdot X=2l+l'$ for some line $l'$.
\end{defn}

\subsection{Residue surface associated to a single curve}
Throughout this subsection, we make the following assumption.
\begin{assm}
The curve $C\subset X$ is smooth irreducible of degree $e\geq 2$ and
genus $g$. Furthermore, $C$ has only finitely many secant lines.
\end{assm}

We want to produce a surface on $X$ which is birational to the
symmetric product of $C$. First we fix several notations. Set
$\mathscr{L}=\calO_X(1)|_C$ and $\Sigma=C^{(2)}$. As in the previous
section, we have a morphism $\varphi:\Sigma\rightarrow
\mathrm{G}(2,V)$ with $\mathscr{E}\cong\varphi^*\mathscr{E}_2$,
where $\mathscr{E}=\mathscr{E}(\mathscr{L})$ is the symmetrization
of $\mathscr{L}$ and $\mathscr{E}_2$ is the canonical rank 2
quotient bundle on $\mathrm{G}(2,V)$. Set $P=\PP(\mathscr{E})$, then
we have the following diagram
$$
\xymatrix{
 D\ar@{^(->}[r] &P\ar[r]^{f\quad}\ar[d]_{\tilde{\pi}} &\PP(V)\\
 &\Sigma &
}
$$
such that a fiber of $\tilde\pi$ maps to a line on $\PP(V)$ and
$D\cong C\times C$ with $f|_D=p_1$ and $\pi=\tilde\pi|_D$ being the
canonical morphism from $C\times C$ to $C^{(2)}$. See the previous
section for more details. Let $\mathfrak{a}\in |\mathscr{L}|$ be a
general hyperplane section, and $\Delta_0=c_1(\pi_*\calO_{C\times
C})$ as before. Set $\xi=c_1(f^*\calO(1))$. By Theorem \ref{secant
bundle}, it is easy to get the following
\begin{equation}\label{relative canonical}
 K_{P/\Sigma}=c_1(\omega_{\tilde\pi})=\tilde\pi^*c_1(\mathscr{E})-2\xi
 =\tilde\pi^*(D_{\mathfrak{a}}+\Delta_0) - 2\xi
\end{equation}
Apply $R\tilde\pi_*$ to the following exact sequence
$$
\xymatrix@C=0.5cm{
  0 \ar[r] & \calO_P(-D) \ar[rr] && \calO_P \ar[rr] && \calO_D \ar[r] & 0 }
$$
and we get the following short exact sequence
$$
\xymatrix@C=0.5cm{
  0 \ar[r] & \tilde\pi_*\calO_P=\calO_\Sigma \ar[rr] && \pi_*\calO_D \ar[rr] && R^1\tilde\pi_*\calO_P(-D) \ar[r] & 0 }
$$
By relative duality, we have
$$
R^1\tilde\pi_*\calO_P(-D) \cong (\tilde\pi_*\calO(D+K_{P/\Sigma}))^*
\cong (\tilde\pi_*\calO(D-2\xi+\tilde\pi^*D_{\mathfrak{a}} +
\tilde\pi^*\Delta_0))^*
$$
Hence by taking the first Chern classes we get
$$
\Delta_0=c_1(\pi_*\calO_D)=c_1(R^1\tilde\pi_*\calO_P(-D))
=-c_1(\tilde\pi_*\calO(D-2\xi+\tilde\pi^*D_{\mathfrak{a}} +
\tilde\pi^*\Delta_0))
$$
Note that $D-2\xi$ is the pull back of some class from $\Sigma$, say
$D-2\xi=\tilde\pi^*D'$. Then the projection formula implies
$$
\tilde\pi_*\calO(D-2\xi+\tilde\pi^*D_{\fa}+\tilde\pi^*\Delta_0)\cong\calO(D'+D_\fa+\Delta_0)
\otimes\tilde\pi_*\calO_P=\calO(D'+D_\fa+\Delta_0)
$$
Combine the above two identities, we have
\begin{equation}\label{class of D}
D=2\xi -\tilde\pi^*(D_{\mathfrak{a}}+2\Delta_0)
\end{equation}
Let $s\in \mathrm{H}^0(\PP(V),\calO_{\PP(V)}(3))=\Sym^3V$ be the
degree 3 homogeneous polynomial whose zero defines
$X\subset\PP(V)=\PP^4$. Then
$$
\divisor(f^*s)=D+S
$$
for some surface $S$ in $P$.
\begin{defn}\label{residue surface single}
The surface $S$ together with the morphism $\phi=f|_S:S\rightarrow
X$ is called the \textit{residue surface} associated to $C\subset
X$.
\end{defn}
\noindent Note that the class of $D+S$ is $3\,\xi$, we combine this
with \eqref{class of D} and easily get the following
\begin{equation}\label{class of S}
S=\xi + \tilde\pi^*(D_{\mathfrak{a}}+2\Delta_0)
\end{equation}
The following lemma was proved in \cite{hrs} (Lemma 4.2). Here we
give a different proof.
\begin{lem}
Counting with multiplicities, there are
$N(e,g)=\frac{5e(e-3)}{2}+6-6g$ secant lines of $C$.
\end{lem}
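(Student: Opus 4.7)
The plan is to realize the secant lines of $C$ as the zero locus of a natural section of a rank-$2$ vector bundle on $\Sigma = C^{(2)}$, and evaluate the resulting Chern number using Theorem~\ref{secant bundle}.

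First, the cubic $s \in \Sym^3 V$ defining $X$ pulls back to a section $f^*s$ of $\calO_P(3\xi)$ which vanishes identically on $D$ because $f|_D = i\circ p_1$ factors through $C \subset X$. Dividing by the canonical section of $\calO_P(D)$ produces a section $\tilde s$ of $\calO_P(3\xi - D) = \calO_P(S)$ with $\divisor(\tilde s) = S$. Pushing forward by $\tilde\pi$ and combining \eqref{class of S} with the projection formula gives a section $\sigma = \tilde\pi_* \tilde s$ of the rank-$2$ bundle
\[
\tilde\pi_* \calO_P(S) \;\cong\; \mathscr{E} \otimes \calO_\Sigma(D_{\mathfrak{a}} + 2\Delta_0).
\]
Since each fiber $L_t = \tilde\pi^{-1}(t)$ is a $\PP^1$ meeting $D$ in two distinguished points but is not contained in $D$, the section $\sigma$ vanishes at $t \in \Sigma$ iff $\tilde s$ vanishes identically on $L_t$, iff $L_t \subset X$, iff $L_t$ is a secant line of $C$ in the sense of Definition~\ref{defn of secant line}.

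By the finiteness hypothesis in Assumption~3.3, the zero scheme of $\sigma$ is $0$-dimensional, so the number of secant lines counted with multiplicities equals $\int_\Sigma c_2\bigl(\mathscr{E}\otimes\calO_\Sigma(D_{\mathfrak{a}}+2\Delta_0)\bigr)$. Expanding via $c_2(\mathscr{E}\otimes M) = c_2(\mathscr{E}) + c_1(\mathscr{E})c_1(M) + c_1(M)^2$ with $M = \calO_\Sigma(D_{\mathfrak{a}}+2\Delta_0)$, and substituting the identities from Theorem~\ref{secant bundle} and its corollary---namely $D_{\mathfrak{a}}^2 = 2\mathfrak{a}^{[2]}+\delta_*\mathfrak{a}$, $D_{\mathfrak{a}}\cdot\Delta_0 = -\delta_*\mathfrak{a}$, $2\Delta_0^2 = -\delta_*K_C$, $c_1(\mathscr{E}) = D_{\mathfrak{a}}+\Delta_0$, $c_2(\mathscr{E})=\mathfrak{a}^{[2]}$---a short computation yields $5\mathfrak{a}^{[2]} - 5\delta_*\mathfrak{a} - 3\delta_*K_C$. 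Taking degrees via $\deg\mathfrak{a}^{[2]} = \binom{e}{2}$, $\deg\delta_*\mathfrak{a} = e$, and $\deg\delta_*K_C = 2g-2$ gives $\tfrac{5e(e-3)}{2} + 6 - 6g = N(e,g)$.

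The main obstacle is the careful identification of the zero scheme of $\sigma$ with the secant lines at the level of multiplicities; in particular, one must check that an honest tangent line to $C$ on $X$ corresponds to a simple zero of $\sigma$ at the diagonal point $\delta(x) \in \Sigma$, so that the scheme-theoretic count coincides with the geometric meaning of ``counting with multiplicities''. The finiteness assumption on secant lines is precisely what rules out any positive-dimensional vanishing locus of $\sigma$ and makes the Chern class computation equal to an honest integer count.
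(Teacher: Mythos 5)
Your proposal is correct and follows essentially the same route as the paper: both realize the secant lines as the zero scheme of the pushed-forward section of $\calO_P(S)$ inside the rank-$2$ bundle $\tilde\pi_*\calO_P(S)\cong\mathscr{E}\otimes\calO_\Sigma(D_{\mathfrak{a}}+2\Delta_0)$ and evaluate $c_2$ using the identities of Theorem~\ref{secant bundle}. Note that the multiplicity issue you flag at the end is dissolved by the paper's conventions, since Definition~\ref{well positioned single} \emph{defines} $e(C;l)$ as the length of $B_C$ at $[l]$.
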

\begin{proof}
Let $\sigma=\tilde\pi|_S:S\rightarrow \Sigma$. As a divisor on $P$,
the surface $S$ defines a invertible sheaf $\calO_P(S)$ together
with a section $s_0$. Then $s_0$ can be viewed as a section of
$\tilde\pi_*\calO_P(S)$ and the zero locus of this section is
exactly the scheme $B_C$ of lines on $X$ that meet $C$ in two
points. To compute the length of $B_C$, it suffices to compute
$c_2(\tilde\pi_*\calO_P(S))$. From \eqref{class of S}, we know that
$\tilde\pi_*\calO_P(S)=\mathscr{E}\otimes\calO_\Sigma(D_\mathfrak{a}
+2\Delta_0)$. It follows that
\begin{align*}
c_2(\tilde\pi_*\calO_P(S)) & = c_1(\mathscr{E})\cdot (D_\mathfrak{a}
+ 2\Delta_0) + c_2(\mathscr{E}) + (D_\mathfrak{a}+2\Delta_0)^2\\
 & = (D_\mathfrak{a}+\Delta_0)(D_\mathfrak{a}+2\Delta_0)
 +\mathfrak{a}^{[2]} + (D_\mathfrak{a}+2\Delta_0)^2\\
 & = 2 D_\mathfrak{a}^2 + 7D_\mathfrak{a}\cdot\Delta_0+ 6\Delta_0^2
 +\mathfrak{a}^{[2]}\\
 & = 5\mathfrak{a}^{[2]} -5\delta_*\mathfrak{a} -3\delta_*K_C\\
 &\equiv 5e(e-1)/2 -5e +6(1-g)=N(e,g)
\end{align*}
Here $\equiv$ denotes numerical equivalence.
\end{proof}

\begin{defn}\label{well positioned single}
We define the \textit{multiplicity}, $e(C;l)$, of a secant line $l$
to be the length of $B_C$ at the point $[l]$. The curve $C\subset X$
is \textit{well-positioned} if it has exactly $N(e,g)$ distinct
secant lines or equivalently all secant lines are of multiplicity 1.
\end{defn}

\begin{prop}\label{prop of single residue}
Assume that $C$ is neither a line nor a conic. Then the following are true\\
(1) The surface $S$ is smooth if and only if $C$ is well-positioned.
If $C$ is well-positioned, then $S$ is the blow-up of $\Sigma$ at
$N(e,g)$ points.\\
(2) Assume that $C$ is well-positioned, then the following
equalities hold in $\mathrm{CH}_1(S)$,
\begin{align}
\xi|_S &= 2\,\sigma^*D_\mathfrak{a} + 3\,\sigma^*\Delta_0 -
\sum_{i=1}^{N(e,g)}E_i\\
D|_S & = 3\,\sigma^*D_\mathfrak{a} + 4\,\sigma^*\Delta_0 -
2\sum_{i=1}^{N(e,g)}E_i\\
S|_S & =
3\,\sigma^*D_\mathfrak{a}+5\,\sigma^*\Delta_0-\sum_{i=1}^{N(e,g)}E_i
\end{align}
where $\sigma:S\rightarrow \Sigma$ is the blow up, and $E_i$ are the
exceptional curves.
\end{prop}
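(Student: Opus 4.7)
The plan is to establish part (1) by a local analysis of $S\subset P$ near the fibers over secant-line points, and then derive all three identities in part (2) from a single adjunction computation using that $S\to\Sigma$ is a blow-up.

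For part (1), I would first observe that $\sigma=\tilde\pi|_S:S\to\Sigma$ is birational: over a non-secant point $t=\pi(x,y)$, the line $\overline{i(x)\,i(y)}$ meets $X$ in the two points of $D\cap\tilde\pi^{-1}(t)$ together with a single residual point, which is the unique point of $S$ over $t$; over a secant point $[l]\in B_C$, the line $l\subset X$, so the defining section of $\calO_P(S)$ vanishes identically on the fiber $\tilde\pi^{-1}([l])\cong\PP^1$, forcing the whole fiber into $S$. To analyze $S$ in a neighborhood of such a fiber, I would trivialize the $\PP^1$-bundle $P$ over a small open $U\subset\Sigma$ around $[l]$ and write the defining section of $\calO_P(S)|_{\tilde\pi^{-1}(U)}$ in fiberwise homogeneous coordinates $[s:t]$ as $a(x,y)s+b(x,y)t$ for local functions $a,b$ on $U$; by construction $\{a=b=0\}=B_C\cap U$. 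A direct Jacobian computation then shows that $S$ is smooth over $U$ if and only if $da$ and $db$ are linearly independent at $[l]$, i.e., iff $B_C$ is reduced at $[l]$, and in that case $(a,b)$ are local coordinates on $U$ and $as+bt=0$ is precisely the standard equation of the blow-up $\mathrm{Bl}_{[l]}U\to U$. Assembling the local pictures, $\sigma$ is a proper birational morphism of smooth projective surfaces whose exceptional locus is the disjoint union of $N(e,g)$ smooth rational $(-1)$-curves, so $S$ is the blow-up of $\Sigma$ at the reduced scheme $B_C$.

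For part (2), it suffices to compute $\xi|_S$, because the formulas for $S|_S$ and $D|_S$ then follow by restricting \eqref{class of S} and \eqref{class of D} to $S$ and using $\sigma^*=\tilde\pi^*|_S$. The computation of $\xi|_S$ is a single adjunction: from \eqref{relative canonical},
\[ K_P=\tilde\pi^*K_\Sigma+\tilde\pi^*(D_\fa+\Delta_0)-2\xi, \]
so adjunction together with \eqref{class of S} yields
\[ K_S=(K_P+S)|_S=\sigma^*K_\Sigma+\sigma^*(2D_\fa+3\Delta_0)-\xi|_S. \]
On the other hand, part (1) gives $K_S=\sigma^*K_\Sigma+\sum_{i=1}^{N(e,g)}E_i$. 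Comparing the two expressions produces the stated formula for $\xi|_S$, and the rest is substitution.

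The main obstacle will be in part (1): the local model $as+bt=0$ is transparent, but one must verify that $S$ is globally reduced and irreducible, so that $\sigma$ is truly a birational morphism rather than carrying embedded or multiple components over $B_C$, and that the exceptional curves glue into disjoint smooth $(-1)$-curves. The hypothesis that $C$ is neither a line nor a conic enters here to ensure that $\varphi:\Sigma\to\mathrm{G}(2,V)$ is sufficiently nondegenerate for the setup to behave as described. Once part (1) is in hand, part (2) is a purely formal calculation.
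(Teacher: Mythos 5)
Your proposal is correct and follows essentially the same route as the paper: the local factorization of $f^*s$ near a secant fiber into the quadric cutting out $D$ times a residual linear form $uT_0+vT_1$, with smoothness of $S$ equivalent to $(u,v)$ generating the maximal ideal (i.e.\ $B_C$ reduced there), which simultaneously exhibits the blow-up local model; and part (2) by the identical adjunction computation $K_S=(K_P+S)|_S$ compared against $K_S=\sigma^*K_\Sigma+\sum E_i$, followed by substitution into \eqref{class of D} and \eqref{class of S}. The global reducedness issue you flag is already settled by the fact that $S$ has fiber degree $1$ over $\Sigma$ together with your local model, so no further argument is needed.
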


\begin{proof}
(1) Since $\sigma=\tilde\pi|_S:S\rightarrow \Sigma$ is isomorphism
away from the exceptional locus $E_i$, and each $E_i$ corresponds to
a line on $X$ that meets $C$ in two points. Let $E$ be one of those
$E_i$'s which meets $C$ in points $P_1$ and $P_2$. Let $(x,y)$ be a
set local parameters of $\Sigma$ at $\sigma(E)$. We use $[T_0:T_1]$
to denote the homogeneous coordinates of $E$ with $P_1=[0:1]$ and
$P_2=[\lambda:1]$. Then we have
$$
f^*s=Q(T_0,T_1)\cdot(u(x,y)T_0+v(x,y)T_1)
$$
in a neighborhood of $E$ where $Q(T_0,T_1)$ is a quadratic form in
$T_0$ and $T_1$ with
$$
Q|_{(x,y)=(0,0)}=T_0(T_0-\lambda T_1).
$$
Hence the local equation for $S$ is $uT_0+vT_1=0$. Thus $S$ is
smooth along $E$ if and only if $(u,v)$ generate the maximal idea of
$\sigma(E)$ in $\Sigma$. The last condition is the same as
$\tilde\pi_*(f^*s)$ has a simple zero at $\sigma(E)$ as a section of
$\tilde\pi_*(\calO_P(S))$. Hence $S$ is smooth if and only if $C$ is
well-positioned.

(2) By adjunction formula, we have
$$
K_S=[K_P+S]|_S=(\tilde\pi^*K_{\Sigma}+K_{P/\Sigma}+[S])|_S=\sigma^*K_\Sigma
+\sigma^*(2\,D_\mathfrak{a}+3\,\Delta_0)-\xi|_S
$$
On the other hand $S$ is the blow up of $\Sigma$ and we have
$$
K_S=\sigma^*K_\Sigma +\sum E_i
$$
Compare the above two identities, we get
$$
\xi|_S=\sigma^*(2\,D_\mathfrak{a}+3\,\Delta_0) - \sum E_i
$$
Hence $D|_S$ and $S|_S$ can be computed easily using \eqref{class of
D} and \eqref{class of S}.
\end{proof}

\subsection{Residue surface associated to a pair of curves}
In this section we fix two at worst nodal complete curves
$i_1:C_1\hookrightarrow X$ of degree $e_1$ and genus $g_1$ and
$i_2:C_2\hookrightarrow X$ of degree $e_2$ and genus $g_2$. Let
$C=C_1\cup C_2\subset X$ be the union. We allow $C_1$ and $C_2$ to
meet each other transversally at $x_1,\ldots,x_r$ which are smooth
points on each curve. If $C_1$ and $C_2$ do meet, we use
$\Pi_i=\Pi_{x_i}$ to denote the plane spanned by the tangent
directions of $C_1$ and $C_2$ at the point $x_i$.
\begin{defn}\label{secant line pair}
A line $l\subset X$ is called a \textit{secant line} of the pair
$(C_1,C_2)$ if $l$ is a secant line of $C_1\cup C_2$ but not of
$C_1$ nor of $C_2$.
\end{defn}
We fix the following assumptions for the remaining of this
subsection.
\begin{assm}
Both $C_1$ and $C_2$ are smooth irreducible. There are only finitely
many secant lines of the pair $(C_1,C_2)$. If one of the curves is a
line, then they do not meet each other.
\end{assm}

Let $\mathscr{L}_1=\calO_X(1)|_{C_1}$ and
$\mathscr{L}_2=\calO_X(1)|_{C_2}$. By sending $(x,y)\in C_1\times
C_2$ to the line $\overline{i_1(x)i_2(y)}\subset\PP(V)$, we get a
rational map $ \varphi_0:C_1\times C_2 \dashrightarrow
\mathrm{G}(2,V)$. The point $x_i\in C_1\cap C_2$ determines
$\bar{x}_i\in C_1\times C_2$, $i=1,\ldots,r$, which form the locus
where $\varphi_0$ is not defined. Let $\Sigma$ be the blow up of
$C_1\times C_2$ at the points $\bar{x}_i$ and $F_i$ be the
corresponding exceptional curves, $i=1,\ldots,r$. Then $\varphi_0$
extends to a morphism
$$
\varphi:\Sigma\rightarrow \mathrm{G}(2,V)
$$
such that $\varphi|_{F_i}$ parameterizes all lines through $x_i$
lying on the plane $\Pi_i$. This can be easily seen from the local
description of the map $\varphi_0$. Let $t_\alpha$ be the local
parameter of $C_\alpha$ at $x_i$, $\alpha=1,2$. In homogeneous
coordinates, the line $\overline{i_1(t_1)i_2(t_2)}$ can be
parameterized by $(1-\lambda)u_1(t_1)+\lambda u_2(t_2)$, where
$\lambda$ is a local coordinate of the line. Note that
$u_1(0)=u_2(0)=\vec{a}$ as vectors of $V^*$. Hence
$u_1=\vec{a}+t_1v_1(t_1)$ and $u_2=\vec{a}+t_2v_2(t_2)$. Hence the
above parametrization can be written as
$$
u_1 + \lambda(t_1v_1(t_1) - t_2v_2(t_2))
$$
A neighborhood of $F_i$ has two charts. On the chart with
coordinates $(x=t_1,y=t_2/t_1)$ we define $\varphi(x,y)$ to be the
line parameterized by
$$
u_1 + \lambda(v_1(x) - yv_2(xy))
$$
and on the chart with coordinates $(x'=t_1/t_2,y'=t_2)$, we set
$\varphi(x',y')$ to be the line parameterized by
$$
u_1 + \lambda(x'v_1(x'y') - v_2(y'))
$$
This gives the morphism $\varphi$ as an extension of $\varphi_0$.

Let $\mathscr{E}=\varphi^*\mathscr{E}_2$. The definition of
$\varphi$ implies that on $\Sigma-\cup F_i$ the sheaf $\mathscr{E}$
is isomorphic to $p_1^*\mathscr{L}_1\oplus p_2^*\mathscr{L}_2$. We
also note that $\varphi(F_i)$ is a line (with respect to the
Pl\"ucker embedding) and hence $c_1(\mathscr{E})\cdot F_i=1$.
Combine the above two facts, we have
\begin{equation}
c_1(\mathscr{E})  = \mathfrak{a}_1\times C_2 +
C_1\times\mathfrak{a}_2 -\sum_{i=1}^{r} F_i
\end{equation}
The Schubert calculus tells us that $c_2(\mathscr{E}_2)$ is
represented by the Schubert cycle parameterizing all lines contained
in a hyperplane. This gives
\begin{equation}
 c_2(\mathscr{E})  =
\mathfrak{a}_1\times\mathfrak{a}_2.
\end{equation}
Here $\mathfrak{a}_1\in|\mathscr{L}_1|$ and
$\mathfrak{a}_2\in|\mathscr{L}_2|$ are general elements from the
corresponding complete linear systems. We are also viewing
$\mathfrak{a}_1\times C_2$, $C_1\times\mathfrak{a}_2$ and
$\mathfrak{a}_1\times\mathfrak{a}_2$ as cycles on $\Sigma$. Let
$$
\xymatrix{
 P \ar[r]^f\ar[d]_{\tilde\pi} &\PP(V)\\
 \Sigma &
}
$$
be the total family of lines over $\Sigma$. The morphism $\tilde\pi$
admits two distinguished sections $D_1\subset P$ and $D_2\subset P$
corresponding to points on $C_1$ and $C_2$ respectively. It is easy
to see that $D_1$ and $D_2$ meet above $F_i$. Let $D=D_1+D_2$. If
$s\in \Sym^3V$ is the homogeneous polynomial that defines $X$, then
$f^*s$ can be viewed as a section of
$\calO_P(3)=f^*\calO_{\PP(V)}(3)$, whose zero defines
$$
\divisor(f^*s)=D+S
$$
for some divisor $S\subset P$.

\begin{defn}\label{residue surface pair}
The surface $S$ together with the morphism $\phi=f|_S:S\rightarrow
X$ is called the \textit{residue surface} associated to the pair of
curves $C_1$ and $C_2$.
\end{defn}

In order to determine the divisor classes of $D$ and $S$, let
$\xi=c_1(\calO_P(1))$, where $\calO_P(1)=f^*\calO_{\PP(V)}(1)$. Let
$\pi=\tilde\pi|_D:D\rightarrow\Sigma$. Consider the following short
exact sequence
$$
\xymatrix@C=0.5cm{
  0 \ar[r] & \calO_P(-D) \ar[rr] && \calO_P \ar[rr] && \calO_D \ar[r] & 0 }
$$
By applying $R\tilde\pi_*$ to the above sequence, we get
$$
\xymatrix@C=0.5cm{
  0 \ar[r] & \calO_\Sigma=\tilde\pi_*\calO_P \ar[rr] && \pi_*\calO_D \ar[rr] && R^1\tilde\pi_*\calO_P(-D) \ar[r] & 0 }
$$
By duality,
$R^1\tilde\pi_*\calO_P(-D)=(\tilde\pi_*\calO(K_{P/\Sigma}+D))^*
=(\tilde\pi_*\calO_P(-2\xi+D+\tilde\pi^*c_1(\mathscr{E})))^*$. Hence
we get
$$
D=2\xi -\tilde\pi^*c_1(\mathscr{E})
-\tilde\pi^*c_1(\tilde\pi_*\calO_D)
$$
Note we have the following short exact sequence on $D$,
$$
\xymatrix@C=0.5cm{
  0 \ar[r] & \calO_{D_1}(-\sum F_i) \ar[rr] && \calO_D \ar[rr] && \calO_{D_2} \ar[r] & 0 }
$$
Here we view $F_i$ as divisors on $D_1$ by identifying $D_1$ with
$\Sigma$. By taking direct images, we have
$$
\xymatrix@C=0.5cm{
  0 \ar[r] & \calO_\Sigma(-\sum F_i) \ar[rr] &&\pi_*\calO_D \ar[rr] &&\calO_\Sigma \ar[r] & 0 }
$$
and it follows that $c_1(\pi_*\calO_D)=-\sum F_i$. This gives the
class of $D$ by
\begin{equation}\label{class of D 2}
D=2\xi-\tilde\pi^*(\mathfrak{a}_1\times C_2 +C_1\times
\mathfrak{a}_2) +2\sum_{i=1}^{r}\tilde\pi^*F_i
\end{equation}
Since $D+S=3\xi$, we get
\begin{equation}\label{class of S 2}
S=\xi+ \tilde\pi^*(\mathfrak{a}_1\times C_2 +C_1\times
\mathfrak{a}_2) -2\sum_{i=1}^{r}\tilde\pi^*F_i
\end{equation}

\begin{lem}
Counted with multiplicity, there are $5e_1e_2-6r$ secant lines of
the pair $(C_1,C_2)$, where $r$ is the number of points in which the
two curves meet each other.
\end{lem}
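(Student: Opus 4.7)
The strategy mirrors the one used for a single curve: reinterpret the secant-line scheme as the zero locus of a section of a rank-$2$ bundle on $\Sigma$, and then compute its second Chern class. As before, the divisor $S\subset P$ is defined as the residual divisor of $D$ in $\divisor(f^*s)$, and the section $s_0\in\mathrm{H}^0(P,\calO_P(S))$ corresponds, via push-forward, to a section $\tilde\pi_*s_0\in\mathrm{H}^0(\Sigma,\tilde\pi_*\calO_P(S))$. Since $S\cdot L_t = 1$ on each fiber $L_t=\tilde\pi^{-1}(t)$, the sheaf $\tilde\pi_*\calO_P(S)$ has rank $2$, and $\tilde\pi_*s_0$ vanishes at $t$ precisely when $L_t\subset X$. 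Thus the scheme $B_{C_1,C_2}\subset\Sigma$ of secant lines of the pair is identified with the zero locus of $\tilde\pi_*s_0$, and its length is $c_2(\tilde\pi_*\calO_P(S))$.

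The next step is a formal Chern-class computation. From \eqref{class of S 2}, projection formula gives
$$
\tilde\pi_*\calO_P(S) \;\cong\; \mathscr{E}\otimes L, \qquad L=\calO_\Sigma\bigl(\mathfrak{a}_1\times C_2+C_1\times\mathfrak{a}_2 -2\sum_{i=1}^r F_i\bigr).
$$
Using the established formulas $c_1(\mathscr{E})=\mathfrak{a}_1\times C_2+C_1\times\mathfrak{a}_2-\sum F_i$ and $c_2(\mathscr{E})=\mathfrak{a}_1\times\mathfrak{a}_2$, together with the intersection numbers on the blow-up $\Sigma$ (namely $(\mathfrak{a}_1\times C_2)\cdot(C_1\times\mathfrak{a}_2)=e_1e_2$ while the divisors $\mathfrak{a}_j$ can be chosen away from the $x_i$ so $(\mathfrak{a}_1\times C_2)\cdot F_i=(C_1\times\mathfrak{a}_2)\cdot F_i=0$, and $F_i\cdot F_j=-\delta_{ij}$), one computes
$$
c_2(\mathscr{E}\otimes L)=c_2(\mathscr{E})+c_1(\mathscr{E})\cdot c_1(L)+c_1(L)^2 = e_1e_2+(2e_1e_2-2r)+(2e_1e_2-4r)=5e_1e_2-6r.
$$

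The only subtlety, and the main thing to verify carefully, is the identification of $B_{C_1,C_2}$ with the zero locus of $\tilde\pi_*s_0$ \emph{as schemes} — in particular at points lying on the exceptional curves $F_i$. A point of $F_i$ corresponds to a line through $x_i$ in the plane $\Pi_i$; such a line is contained in $X$ exactly when it is a component of the plane cubic $\Pi_i\cap X$, which by Definition \ref{secant line pair} is excluded from ``secant lines of the pair'' only if it is also a secant of $C_1$ or of $C_2$ individually, and the assumption of finiteness ensures no unwanted positive-dimensional contributions. One must check using the explicit local coordinates $(x,y)$ and $(x',y')$ of $\Sigma$ near $F_i$ (introduced just before \eqref{class of D 2}) that the local length at any such point agrees with the natural multiplicity attached to the corresponding secant line. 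This compatibility, together with the Chern class computation above, yields the claimed count $5e_1e_2-6r$.
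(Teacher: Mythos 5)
Your proof is correct and follows essentially the same route as the paper: identify $B_{C_1,C_2}$ with the zero locus of the pushed-forward section of $\calO_P(S)$ and compute $c_2(\tilde\pi_*\calO_P(S))$ using \eqref{class of S 2}. Your twist $\calO_\Sigma(\mathfrak{a}_1\times C_2+C_1\times\mathfrak{a}_2-2\sum F_i)$ is the one consistent with \eqref{class of S 2} and with the final count $5e_1e_2-6r$ (the paper's displayed intermediate formula writes $-\sum F_i$, apparently a typo), and your explicit intersection-number computation matches the paper's $5\,\mathfrak{a}_1\times\mathfrak{a}_2+6(\sum F_i)^2$.
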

\begin{proof}
The surface $S$ gives rise to a section $s_0$ of $\calO_P(S)$. Then
$s_0$ can also be viewed as a section of $\tilde\pi_*\calO_P(S)$,
whose zero locus defines the scheme $B_{C_1,C_2}$ of secant lines of
$(C_1,C_2)$. By \eqref{class of S 2}, we easily see that
$$
\tilde\pi_*\calO(S)=\mathscr{E}\otimes p_1*\mathscr{L}_1\otimes
p_2^*\mathscr{L}_2\otimes\calO_\Sigma(-\sum F_i)
$$
Hence one easily computes
$$
c_2(\tilde\pi_*\calO_P(S))=5\mathfrak{a}_1\times\mathfrak{a}_2+6(\sum
F_i)^2\equiv 5e_1e_2-6r
$$
This implies that the length of $B_{C_1,C_2}$ is $5e_1e_2-6r$.
\end{proof}

\begin{defn}\label{well positioned pair}
For each secant line $l$ of $(C_1,C_2)$ we define the
\textit{multiplicity}, $e(C_1,C_2;l)$, of $l$ to be the length of
$B_{C_1,C_2}$ at the point $[l]$. The pair $(C_1,C_2)$ is called
\textit{well-positioned} if it has exactly $5e_1e_2-6r$ distinct
secant lines, namely all the secant lines are of multiplicity 1.
\end{defn}

\begin{prop}\label{prop of pair residue}
Let $C_1$ and $C_2$ be two smooth complete curves on $X$ as above.
Then the following are true.\\
(1) The residue surface $S$ associated to $(C_1,C_2)$ is smooth if
and only if the pair $(C_1,C_2)$ is well-positioned. In that case,
$S$ is a blow up of $\Sigma$ at $5e_1e_2-6r$ points.\\
(2) Assume that $(C_1,C_2)$ is a well-positioned pair. The following
equalities hold in $\mathrm{CH}_1(S)$
\begin{align}
 \xi|_S &=2\sigma^*(\mathfrak{a}_1\times
C_2 +C_1\times\mathfrak{a}_2)-3\sum_{i=1}^{r}\sigma^* F_i-\sum_{i=1}^{5e_1e_2-6r}E_i\\
 D|_S &=3\sigma^*(\mathfrak{a}_1\times
C_2 +C_1\times\mathfrak{a}_2)-4\sum_{i=1}^{r}\sigma^* F_i-2\sum_{i=1}^{5e_1e_2-6r}E_i\\
 S|_S &=3\sigma^*(\mathfrak{a}_1\times
C_2 +C_1\times\mathfrak{a}_2)-5\sum_{i=1}^{r}\sigma^*
F_i-\sum_{i=1}^{5e_1e_2-6r}E_i
\end{align}
where $\sigma=\tilde{\pi}|_S:S\rightarrow \Sigma$ is the blow up and
$E_i$ are the exceptional divisors of $\sigma$.
\end{prop}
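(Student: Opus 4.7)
The plan is to mimic the proof of Proposition \ref{prop of single residue} step for step, adapting to the fact that $\Sigma$ is now the blow-up of $C_1\times C_2$ at the intersection points $\bar{x}_i$ rather than a symmetric product, so that the exceptional divisors $F_i$ require a little extra care.

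For part (1), I would first observe that $\sigma=\tilde\pi|_S\colon S\to\Sigma$ is an isomorphism over the complement of the finite set of points $[l]\in\Sigma$ corresponding to secant lines of the pair $(C_1,C_2)$: for a generic $(x,y)\in C_1\times C_2$ the line $\overline{i_1(x)\,i_2(y)}$ meets $X$ in the three points $i_1(x)$, $i_2(y)$ and a unique residual point. This remains true at a generic point of each $F_i$, because the plane cubic $\Pi_i\cdot X$ has $C_1$ and $C_2$ as transverse branches meeting at $x_i$, so each line of the pencil through $x_i$ in $\Pi_i$ still has a well-defined residual third intersection with $X$. Then near a point $[l]\in\Sigma$ that is a secant line with $l\cap C_1=\{P_1\}$, $l\cap C_2=\{P_2\}$, I pick local parameters $(x,y)$ on $\Sigma$ at $[l]$ and homogeneous coordinates $[T_0:T_1]$ on the fiber $\tilde\pi^{-1}([l])\cong l$ with $P_1=[0:1]$, $P_2=[\lambda:1]$, and factor
\[
f^*s = Q(T_0,T_1)\cdot\bigl(u(x,y)T_0+v(x,y)T_1\bigr),
\]
where $Q|_{(0,0)}=T_0(T_0-\lambda T_1)$ cuts out $D_1+D_2$. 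The local equation for $S$ becomes $uT_0+vT_1=0$, which is smooth along the exceptional curve $E=\{u=v=0\}$ precisely when $(u,v)$ generate the maximal ideal of $[l]$, i.e.\ precisely when the section $s_0$ of $\tilde\pi_*\calO_P(S)$ has a simple zero at $[l]$. This is the well-positioned condition, and in that case $\sigma$ contracts each of these $5e_1e_2-6r$ rational curves to a smooth point, so $S$ is the blow-up of $\Sigma$ at those points.

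For part (2), I would use adjunction
\[
K_S=\bigl(\tilde\pi^*K_\Sigma+K_{P/\Sigma}+[S]\bigr)\bigr|_S,
\]
combined with $K_{P/\Sigma}=\tilde\pi^*c_1(\mathscr{E})-2\xi$, the formula $c_1(\mathscr{E})=\mathfrak{a}_1\times C_2+C_1\times\mathfrak{a}_2-\sum_i F_i$, and \eqref{class of S 2} for $[S]$, to obtain
\[
K_S=\sigma^*K_\Sigma+\sigma^*\!\bigl(2(\mathfrak{a}_1\times C_2+C_1\times\mathfrak{a}_2)-3\textstyle\sum_i F_i\bigr)-\xi|_S.
\]
Comparing with the blow-up formula $K_S=\sigma^*K_\Sigma+\sum_i E_i$ solves for $\xi|_S$ and gives the first identity; the formulas for $D|_S$ and $S|_S$ then follow by restricting \eqref{class of D 2} and \eqref{class of S 2} and substituting in the expression for $\xi|_S$.

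The main obstacle I anticipate is the verification, around each exceptional divisor $F_i$, that $\sigma$ really is an isomorphism there and that no hidden component of $S$ sits over $F_i$. Because $D_1$ and $D_2$ agree as constant sections $x_i$ over $F_i$, the divisor $D_1+D_2$ cuts the fiber line with multiplicity $2$ at $x_i$, and one must check that the residual third intersection with the plane cubic $\Pi_i\cdot X$ varies smoothly and defines $S$ as a genuine section over a neighborhood of $F_i$. Once this local picture is settled, the rest of the argument is a routine extension of the single-curve case.
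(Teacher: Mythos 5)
Your proposal is correct and follows essentially the same route as the paper: the paper disposes of (1) by the phrase ``direct local computation as before'' (i.e.\ the factorization $f^*s=Q\cdot(uT_0+vT_1)$ from the single-curve case, which you spell out, including the check over the $F_i$ that the paper leaves implicit), and proves (2) by exactly the adjunction computation you give, comparing $K_S=(K_P+S)|_S$ with the blow-up formula $K_S=\sigma^*K_\Sigma+\sum E_i$ and then restricting \eqref{class of D 2} and \eqref{class of S 2}. All three resulting formulas check out.
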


\begin{proof}
The statement (1) follows from direct local computation as before.
For (2), note that under the assumption, $S$ is a smooth surface. By
adjunction formula,
$$
K_S=(K_P+S)|_S=\sigma^*K_\Sigma-\xi|_S+2\sigma^*(\mathfrak{a}_1\times
C_2 +C_1\times\mathfrak{a}_2)-3\sum_{i=1}^{r}\sigma^* F_i
$$
On the other hand the blow-up gives
$$
K_S=\sigma^*K_\Sigma +\sum_{i=1}^{5e_1e_2-6r}E_i
$$
We easily get $\xi|_S$ by comparing the above two equalities. The
other equalities follow easily.
\end{proof}

\section{Relations among 1-cycles} In this section we assume that
$X\subset\PP(V)=\PP^n_k$ is a smooth cubic hypersurface. We want to
investigate the cycle classes represented by a curve on $X$.

Let $\mathrm{Hilb}_{e,g}(X)$ be the Hilbert scheme of degree $e$
genus $g$ curves on $X$. Let
$\mathcal{H}^{e,g}=\mathcal{H}^{e,g}(X)$ be the normalization of the
reduced Hilbert scheme $(\mathrm{Hilb}_{e,g}(X))_{red}$. We use
$Z\subset X\times\mathcal{H}^{e,g}$ to denote the universal family
over $\mathcal{H}^{e,g}$. Note that $Z$ is nothing but the pull back
of the universal family over $\mathrm{Hilb}_{e,g}(X)$. Let
$\mathcal{U}^{e,g}\subset \mathcal{H}^{e,g}$ be the open subscheme
of all points $[C]$ whose corresponding curve $C$ is
well-positioned. When $\mathcal{U}^{e,g}\neq\emptyset$, we define
$$
Y_0=\Set{(x,[C])\in X\times\mathcal{U}^{e,g}}{x\in l\text{ for some
secant line }l\text{ of }C}
$$
and take $Y=\overline{Y_0}\subset X\times\mathcal{H}^{e,g}$. We need
to generalize the concept of secant lines.

\begin{defn}\label{generalized secant line}
Let $[C]\in\mathcal{H}^{e,g}$ be a closed point. If the fiber
$Y_{[C]}=\cup \tilde{E}_i$ is purely one dimensional. Let $E_i$ be
$\tilde{E}_i$ with the reduced structure. Then we call $E_i$ a
\textit{generalized secant line} of $C$ with \textit{multiplicity}
$e(C;E_i)$ being the length of $\tilde{E}_i$ at its generic point.
For a pair $(C_1,C_2)$ of curves on $X$, a \textit{generalized
secant line} $l$ of the pair is defined to be a generalized secant
line of $C_1\cup C_2$ with multiplicity $e(C_1,C_2;l)=e(C_1\cup
C_2;l)-e(C_1;l)-e(C_2;l)$. If the above multiplicity is 0, we do not
count $l$ as a generalized secant line of the pair.
\end{defn}

Let $\mathcal{T}^{e,g}\subset \mathcal{H}^{e,g}$ be the open subset
of all curves (including reducible ones) that have finitely many
generalized secant lines. Then we have $\mathcal{U}^{e,g}\subset
\mathcal{T}^{e,g}$. We use
$\mathcal{U}^{e,g}_{[C]}\subset\mathcal{T}^{e,g}_{[C]}
\subset\mathcal{H}^{e,g}_{[C]}$ to denote the corresponding subsets
of the component containing the point $[C]$.

\begin{thm}\label{main theorem}
Let $X\subset \PP^n_k$ be a smooth cubic hypersurface of dimension
at least 3. Let $h$ denote the class of hyperplane on $X$.\\
\indent(a) Let $[C]\in\mathcal{H}^{e,g}(X)$ be a connected curve of
degree $e$ and genus $g$ on $X$. Assume: (a1) $C$ is not a line;
(a2) $C$ has finitely many (generalized) secant lines $E_i$,
$i=1,\ldots,m$, with multiplicity $a_i=e(C;E_i)$. Then we have
\begin{equation}\label{relation 1}
(2e-3)C+\sum_{i=1}^{m} a_iE_i = (\frac{1}{2}(e-1)(3e-4)-2g)h^{n-2}
\end{equation}
 in $\mathrm{CH}_1(X)$.\\
\indent(b) Let $C_1$ and $C_2$ be two connected curves on $X$ with
degrees $e_1$ and $e_2$ respectively. Assume: (b1) $C_1$ and $C_2$
only meet each other transversally at $r$ smooth points
$x_1,\ldots,x_r$; (b2) any component of $C_1$ or $C_2$ containing
any of the $x_i$'s is not a line; (b3) $(C_1,C_2)$ has finitely many
(generalized) secant lines $E_i$, $i=1,\ldots,m$, with
multiplicities $a_i$. Then we have
\begin{equation}\label{relation 2}
2e_2C_1+2e_1C_2 +\sum_{i=1}^{m}a_iE_i=(3e_1e_2-2r)h^{n-2}
\end{equation}
in $\mathrm{CH}_1(X)$.\\
\indent(c) Let $L\subset X$ be a line and $C\subset X$ be an
irreducible curve of degree $e$ on $X$. Assume: (c1) $L$ meet $C$
transversally in a point $x$; (c2) $C$ has finitely many
(generalized) secant lines; (c3) the pair $(L,C)$ has finitely many
(generalized) secant lines $E_i$ with multiplicity $a_i$,
$i=1,\ldots,m$. Then
\begin{equation}\label{relation 3}
(2e-1)L+2C+\sum_{i=1}^{m}a_iE_i=(3e-2)h^{n-2}
\end{equation}
in $\mathrm{CH}_1(X)$.
\end{thm}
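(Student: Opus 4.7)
The plan is to deploy the residue surface $\phi: S \to X$ of Section 3 and the divisor class identities of Propositions~\ref{prop of single residue} and~\ref{prop of pair residue}, by computing a well chosen pushforward in $\mathrm{CH}_1(X)$ in two independent ways and equating the results. For part (a), the key object is $\phi_*(D|_S) \in \mathrm{CH}_1(X)$.

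The \emph{direct computation} uses $D \cong C \times C$ and $f|_D = i_C \circ p_1$. From $[S] = \xi + \tilde{\pi}^*(D_\fa + 2\Delta_0)$ on $P$, the restriction $S|_D$ has class $2\,\fa \times C + C \times \fa - 2\,\Delta_C$ on $C \times C$; pushing forward by $p_1$ gives $\phi_*(D|_S) = (e-2)[C]$. The \emph{formula computation} expands $D|_S = 3\,\sigma^*D_\fa + 4\,\sigma^*\Delta_0 - 2\sum E_i$ from Proposition~\ref{prop of single residue} and pushes each summand separately: $\phi_*(E_i) = a_i[l_i]$ by construction; $\phi_*(\sigma^*D_\fa) = e(e-1)h^{n-2} - e[C]$, obtained by summing the identity $[C] + [R_x] = (e-1)h^{n-2}$ over the $e$ points $x$ of a hyperplane section $\fa$ of $C$, where $R_x$ is the residual curve and the identity comes from the cone $\mathrm{Cone}_x(C) \subset \PP^n$ of degree $e-1$; and the pushforward of $\sigma^*\Delta_0$ is extracted from $2\Delta_0 = -\pi_*\Delta_C$ together with the decomposition $2[C] + [T_C] = (2e+2g-2)h^{n-2}$ coming from the tangent developable $\mathrm{Tan}(C) \subset \PP^n$ of degree $2e+2g-2$. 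These decompositions hold rationally (not only numerically) in $\mathrm{CH}_1(X)$ because the auxiliary surfaces have classes in $\mathrm{CH}_2(\PP^n) = \Z L^{n-2}$, so the Gysin map $j^!$ for $j: X \hookrightarrow \PP^n$ makes the identifications immediate.

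Equating the two computations of $\phi_*(D|_S)$ and simplifying yields $2\bigl((2e-3)[C] + \sum a_i[l_i]\bigr) = \bigl((e-1)(3e-4) - 4g\bigr)h^{n-2}$. Parts (b) and (c) follow by entirely analogous strategies using the pair residue surface of Section 3.2: for (b), $D = D_1 + D_2$ splits, the direct computation produces a $\Z$-linear combination of $[C_1]$ and $[C_2]$, and the auxiliary cones $\mathrm{Cone}_{x_i}(C_j)$ (of degree $e_j$ since $x_i \notin C_j$ for the pair case) play the role of $\mathrm{Cone}_x(C)$. Part (c) requires extra care because $L$ is simultaneously a component of the configuration and may itself qualify as a secant line of $(L,C)$ in the sense of Definition~\ref{defn of secant line}; it can be handled either directly by the pair residue surface construction adapted to this degenerate situation, or by combining (a) applied to $L \cup C$ with (a) applied to $C$, carefully tracking $L$'s multiplicity at the node.

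\textit{Main obstacle.} The equation derived for (a) carries an overall factor of $2$, a direct consequence of the identity $2\Delta_0 = -\pi_*\Delta_C$. Since both $(e-1)(3e-4)$ and $4g$ are even, the right-hand side is manifestly $2$ times an integral multiple of $h^{n-2}$, and dividing by $2$ recovers the stated formula provided the relevant part of $\mathrm{CH}_1(X)$ has no $2$-torsion — automatic for $\dim X \geq 5$ via Paranjape's theorem. In the general case, this step requires a separate argument, most naturally a more refined integral computation of $\sigma^*\Delta_0$ via the square-root line bundle on $\Sigma$ associated to the double cover $\pi: C \times C \to C^{(2)}$, so that the extraneous factor of $2$ is sidestepped rather than introduced.
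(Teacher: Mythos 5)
Your core computation takes a genuinely different route from the paper's: you evaluate $\phi_*(D\cdot S)$ in two ways and try to identify $\phi_*(\sigma^*D_{\fa})$ and $\phi_*(\sigma^*\Delta_0)$ geometrically via cones over $C$ and the tangent developable. The paper never computes these pushforwards individually; instead it uses the fiber-product identity \eqref{class from Sigma}, which says that $\phi'_*(\tilde\pi^*\mathfrak{A}|_D)+\phi_*(\tilde\pi^*\mathfrak{A}|_S)$ is an \emph{integral} multiple of $h^{n-2}$ for any divisor $\mathfrak{A}$ on $\Sigma$, and then evaluates the $D$-term on $C\times C$, where $\pi^*\Delta_0=-\Delta_C$ holds integrally. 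This difference is exactly the source of your factor-of-$2$ obstacle, and that obstacle is a genuine gap rather than a technicality: the tangent developable only sees $\pi_*\Delta_C=-2\Delta_0$, so your method determines $2\phi_*(\sigma^*\Delta_0)$ and nothing finer, and division by $2$ is not available. For a cubic threefold, $\mathrm{A}_1(X)\cong J(X)(k)$ has nontrivial $2$-torsion, so an identity known only after multiplication by $2$ really does fail to pin down the class; and appealing to Paranjape's theorem for $\dim X\geq 5$ is circular inside this paper, since Corollary \ref{chow group} is deduced from the very statement being proved. The repair you gesture at (the ``square-root line bundle'') is precisely $\calO_{\Sigma}(\Delta_0)$, and the clean way to exploit it is the paper's: take $\mathfrak{A}=\Delta_0$ in \eqref{class from Sigma} to get $\phi_*(\sigma^*\Delta_0)=[C]+(\cdots)h^{n-2}$ with integer coefficients, then fix the unknown multiple of $h^{n-2}$ by comparing degrees. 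As written, your argument proves only $2\times$\eqref{relation 1}.

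The second gap is one of scope. Your argument, even granting the factor of $2$, establishes (a) and (b) only for smooth, well-positioned curves, because the formulas of Propositions \ref{prop of single residue} and \ref{prop of pair residue} that you invoke are proved under exactly those hypotheses. The theorem, however, allows arbitrary connected (possibly singular, reducible, non-well-positioned) curves with finitely many \emph{generalized} secant lines counted with multiplicities $a_i$, and part (c) unavoidably requires (a) for the nodal curve $L\cup C$. The paper spends roughly half of its proof on this extension: the specialization of the cycle $(2e-3)Z+Y$ over a curve in the Hilbert scheme via Lemma \ref{continuity lemma}, Observations I and II, and the device of attaching very free rational curves so as to land at a smooth point of the Hilbert scheme whose general member is well-positioned. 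None of this machinery (or a substitute for it) appears in your proposal, so the statement in the generality claimed remains unproven.
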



\begin{cor}\label{chow group}
For smooth cubic hypersurfaces of dimension at least 2, the Chow
group $\mathrm{CH}_1(X)$ is generated by the classes of all lines on
$X$. If $\dim X\geq3$, then algebraic equivalence and homological
equivalence are the same for 1-cycles on $X$. For $\dim X\geq 5$, we
have $\mathrm{CH}_1(X)\cong\Z$.
\end{cor}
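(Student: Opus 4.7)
The plan is to split the corollary into its three assertions and handle them in turn. The $\dim X=2$ case is classical: on a smooth cubic surface, $\mathrm{CH}_1(X)=\mathrm{Pic}(X)$ is generated by the 27 lines. So suppose $\dim X\geq 3$. The first step is to observe that $h^{n-2}$ already lies in the subgroup $\Lambda\subset\mathrm{CH}_1(X)$ generated by lines: pick two lines $L_1,L_2\subset X$ meeting at a point (which exist for $n\geq 4$), let $\Pi=\langle L_1,L_2\rangle$ be the plane they span, and cut with $X$; the resulting plane cubic contains $L_1\cup L_2$ and has a residual line $L_3\subset X$, so $h^{n-2}=[L_1]+[L_2]+[L_3]\in\Lambda$.

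Next I would show every smooth curve $C\subset X$ with $[C]\in\mathcal{T}^{e,g}$ lies in $\Lambda$. For $C$ of degree $e\geq 2$, Theorem \ref{main theorem}(a) immediately gives $(2e-3)[C]\in\Lambda+\Z\cdot h^{n-2}=\Lambda$. Since $\dim X\geq 3$, one can also find a line $L\subset X$ meeting $C$ transversally at a single smooth point so that the pair $(L,C)$ has only finitely many generalized secant lines; Theorem \ref{main theorem}(c) then yields $2[C]\in\Lambda$. As $2e-3$ is odd, $\gcd(2e-3,2)=1$, and a B\'ezout combination of the two relations forces $[C]\in\Lambda$. To upgrade from this dense open locus of curves to an arbitrary irreducible curve $C_0\subset X$, I would move $C_0$ in a rational family $\{C_t\}_{t\in\PP^1}$ of effective $1$-cycles on $X$ whose general member is smooth and well-positioned, for instance via a pencil on an auxiliary smooth surface in $X$ through $C_0$, and then invoke constancy of class under rational deformation. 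Constructing such a pencil for every $C_0$ is the most delicate step, and is where I expect the main obstacle to lie.

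Once $\Lambda=\mathrm{CH}_1(X)$ has been established, both remaining assertions follow from the geometry of the Fano variety of lines $F(X)$. When $\dim X\geq 3$, $F(X)$ is connected (in fact irreducible), so all lines on $X$ are algebraically equivalent; hence $\mathrm{CH}_1(X)/\!\equiv_{\mathrm{alg}}$ is cyclic, generated by the class of a single line of degree $1$. By weak Lefschetz, $H^{2\dim X-2}(X,\Z)\cong\Z$ is generated by the class of a line, so the degree map identifies both $\mathrm{CH}_1(X)/\!\equiv_{\mathrm{alg}}$ and $\mathrm{CH}_1(X)/\!\equiv_{\mathrm{hom}}$ with $\Z$, proving algebraic and homological equivalence agree on 1-cycles.

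Finally, when $\dim X\geq 5$, an adjunction-based canonical class computation on the Grassmannian $\mathrm{G}(2,V)$ shows that $F(X)$ is a smooth Fano variety, hence rationally chain connected. Any two lines on $X$ are then joined by a chain of rational curves in $F(X)$, each sweeping out a $\PP^1$-family of lines on $X$; these lines are therefore rationally equivalent. Consequently $\mathrm{CH}_1(X)$ is generated by the class of a single line, which is non-torsion (degree $1$), giving $\mathrm{CH}_1(X)\cong\Z$.
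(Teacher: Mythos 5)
Your treatment of the first case (smooth $C$ with finitely many secant lines) and of the last two assertions is essentially the paper's argument: the coprimality of $2e-3$ and $2$ coming from relations (a) and (b)/(c), the irreducibility of $F(X)$ for $\dim X\geq 3$ giving algebraic equivalence of all lines, and rational (chain) connectedness of the Fano variety $F(X)$ for $\dim X\geq 5$ forcing all lines to be rationally equivalent. The observation that $h^{n-2}$ is itself a sum of three lines is also consistent with the paper, where every relation's right-hand side is a multiple of $h^{n-2}$.

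The genuine gap is exactly where you flag it: the reduction from an arbitrary irreducible curve $C_0$ (possibly singular, possibly with infinitely many secant lines) to the smooth well-positioned case. A pencil through $C_0$ on an auxiliary smooth surface $S\subset X$ does not exist or does not do the job in general: $S$ cannot be taken smooth if $C_0$ has bad singularities (a general hypersurface section through $C_0$ is singular along $\mathrm{Sing}(C_0)$); $C_0$ may move in no pencil on $S$ at all; and even when a pencil exists, its members are constrained to lie on the fixed surface $S$, so there is no reason the general member is well-positioned in $X$ (finitely many secant lines, all of multiplicity one) --- well-positionedness is a genericity condition in the Hilbert scheme of $X$, not of $S$. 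The paper's mechanism is different: attach sufficiently many very free rational curves $C_i$ to $C_0$ so that the comb $C'=C_0\cup(\cup_i C_i)$ has unobstructed deformations and hence smooths out inside its Hilbert scheme component to a family over a curve $B$ whose general member is well-positioned; extend the secant-line assignment $b\mapsto\sum[L_i]\in\mathrm{Sym}^{n_1}(F)$ over all of $B$ and apply the continuity lemma (Lemma \ref{continuity lemma}) to conclude that the relations of Theorem \ref{main theorem} hold at the special fibre $C'$; deduce $(2e'-3)[C']$ and $2[C']$ lie in $\Lambda$, hence $[C']\in\Lambda$; and finally subtract the teeth $C_i$, each of which is already in $\Lambda$ by the smooth case. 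Without this ``attach very free curves and smooth'' step (or an equivalent degeneration argument), the corollary is only proved for curves in the dense open locus $\mathcal{U}^{e,g}$, not for all of $\mathrm{CH}_1(X)$.
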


\begin{proof}
(of Theorem \ref{main theorem}) First, we will prove both statements
(a) and (b) in a similar way, under the additional assumption that
the curve $C$ (resp. $C_1$ and $C_2$) is (are) smooth and $C$ (resp.
the pair $(C_1,C_2)$) is well-positioned. Consider the following
situation. Let $\Sigma$ be a surface together with a morphism
$\varphi:\Sigma\rightarrow \mathrm{G}(2,V)$ and a diagram as follows
$$
\xymatrix{
  D\cup S \ar[d]_{i'} \ar[r]^{\;\varphi'} & Y \ar[d]_{i_Y} \ar[r]^{p'} & X \ar[d]^{i_X} \\
  P \ar[d]_{\tilde\pi} \ar[r]^{\tilde\varphi\quad} & \mathrm{G}(1,2,V) \ar[d]_{\pi_0} \ar[r]^{\quad p} & \PP(V) \\
  \Sigma \ar[r]^{\varphi\quad} & \mathrm{G}(2,V)  &   }
$$
where all the squares are fiber products. Let $\phi:S\rightarrow X$
and $\phi':D\rightarrow X$ be the natural morphisms. Assume that the
image of $D$ in $X$ is at most 1 dimensional and the image of $S$ in
$X$ is 2 dimensional. Then one computes
$$
\phi_*S=p'_*(D+
S)=p'_*i_Y^*\tilde\varphi_*P=i_X^*p_*\tilde\varphi_*P=(p_*\tilde\varphi_*P)|_X
$$
Note that $p_*\tilde\varphi_*P=a[\PP^3]$ in $\mathrm{CH}_3(\PP(V))$
for some integer $a>0$. It follows that $\phi_*S=a\,h^{n-3}$. Let
$\xi=c_1(\calO_P(1))$ where
$\calO_P(1)=\tilde\varphi^*p^*\calO_{\PP(V)}(1)$. Then by projection
formula, we get
\begin{equation}\label{image of xi}
\phi_*(\xi|_S)=\phi_*\phi^*h=h\cdot\phi_*S=a\,h^{n-2}
\end{equation}
Namely $\phi_*\xi|_S$ is an integral multiple of $h^{n-2}$. Let
$\mathfrak{A}$ be a divisor on $\Sigma$. By construction
$$
\varphi'_*(\tilde\pi^*\mathfrak{A}|_D+\tilde\pi^*\mathfrak{A}|_S)=\tilde\varphi_*\tilde\pi^*\mathfrak{A}|_Y=\pi^*\varphi_*\mathfrak{A}|_Y
$$
By applying $p'_*$, we get the following
\begin{equation}\label{class from Sigma}
\phi'_*(\tilde\pi^*\mathfrak{A}|_D)+\phi_*(\tilde\pi^*\mathfrak{A}|_S)
=p'_*i_Y^*\pi^*\varphi_*\mathfrak{A}=(p_*\pi_0^*\varphi_*\mathfrak{A})|_X
= b[\PP^2]|_X=b\,h^{n-2}
\end{equation}
for some integer $b$.

To prove (a), we take $\Sigma$ to be $C^{(2)}$ and $S$ to be the
associated residue surface. We know that $D\cong C\times C$ and
$\phi'=p_1$. We take $\mathfrak{A}=D_\mathfrak{a}$ in \eqref{class
from Sigma}, where $\mathfrak{a}\subset C$ a general hyperplane
section of $C$.  Recall that
$D_\mathfrak{a}=(\tilde{\pi}|_D)_*(\fa\times C)$ is defined in
\eqref{first notations}. Note that $\phi'_*(\tilde\pi^*
D_\mathfrak{a}|_D)=\phi'_*(\fa\times C+C\times\fa)=e[C]$, we know
that
$$
\phi_*(\tilde\pi^* D_\mathfrak{a}|_S)=-e[C]+(\cdots)h^{n-2}
$$
Similarly, in \eqref{class from Sigma} if we take
$\mathfrak{A}=\Delta_0$ (see section 2 for the definition of
$\Delta_0$) and note that $\phi'_*(\tilde\pi^*\Delta_0|_D)=-[C]$, we
get
$$
\phi_*(\tilde\pi^*\Delta_0|_S)=[C]+(\cdots)h^{n-2}
$$
Put all these in to the equality for $\xi|_S$ in Proposition
\ref{prop of single residue}, we have
$$
(2e-3)[C]+\sum E_i=-\phi_*(\xi|_S)+(\cdots)h^{n-2}=\text{multiple of
}h^{n-2}
$$
We know that the degree of $h^{n-2}$ is 3, hence by comparing the
total degrees of the two sides of the above equality, we see that
the right hand side is $(1/2(e-1)(3e-4)-2g)h^{n-2}$.

For (b), we take $\Sigma$ to be the blow-up of $C_1\times C_2$ at
the points where the two curves meet. We use the notations from
Proposition \ref{prop of pair residue}. First we notice
$\sigma^*F_i$ maps to a singular plane cubic whose class is
$h^{n-2}$. This is because the image of $\sigma^*F_i$ in $X$ is the
intersection of $\Pi_i$ and $X$. Then we take
$\mathfrak{A}=\mathfrak{a}_1\times C_2 + C_1\times\mathfrak{a}_2$ in
\eqref{class from Sigma} and note that
$\phi'_*(\tilde\pi^*\mathfrak{A}|_D)=e_2[C_1]+e_1[C_2]$, we get
$$
\phi_*(\sigma^*\mathfrak{A})=-e_2[C_1]-e_1[C_2]+(\cdots)h^{n-2}
$$
We put this relation into the equality for $\xi|_S$ in Proposition
\ref{prop of pair residue} and get
$$
2e_2[C_1]+2e_1[C_2]+\sum E_i=(\cdots)h^{2}
$$
By comparing the degrees, we know that the right hand side is
$(3e_1e_2-2r)h^{n-2}$.

Note that if the statement of (b) holds for each pair
$(C_{1,i},C_{2,j})$ where $C_{1,i}$ is a component of $C_1$ and
$C_{2,j}$ is a component of $C_2$, then the (b) holds for
$(C_1,C_2)$. This means that we proved (b) when all components if
$C_1$ and $C_2$ are smooth and the components are pairwise
well-positioned.

To prove the theorem in full generality, we need the following
\begin{lem}\label{continuity lemma}
Let $B/k$ be a smooth projective curve. Let $y\in Z_{p+1}(X\times
B)$ be an algebraic cycle of relative dimension $p$ (over $B$). Let
$\gamma:B(k)\rightarrow \mathrm{CH}_p(X)$ be the map induce by $y$.
Assume that there exists a Zariski open $U\subset B$ such that
$\gamma$ is constant on $U$, then $\gamma$ is constant on $B$.
\end{lem}
The proof of this lemma is easy since any closed point of $B$ is
rationally equivalent to a linear combination of points of $U$.

\textit{Observation I}: If $[C]\in\mathcal{T}^{e,g}$ is a smooth
point and $\mathcal{U}^{e,g}_{[C]}\neq\emptyset$, then (a) holds for
$C$.

Set $z=(2e-3)Z+Y\in \mathrm{CH}^{n-2}(X\times\mathcal{H}^{e,g})$. We
can pick a smooth curve $B$ that maps to $\mathcal{H}^{e,g}_{[C]}$
and connects $[C]$ with a general point. Then we apply Lemma
\ref{continuity lemma} with $y$ being the pullback of $z$. Since (a)
holds for a general point of $B$, then the lemma implies that it
holds for all points of $B$, whose image are smooth points of
$\mathcal{T}^{e,g}_{[C]}$. Completely similarly, we have the
following

\textit{Observation II}: Notations as in (b). Let
$[C_1]\in\mathcal{F}_{C_1,C_2}$ be a smooth point, where
$\mathcal{F}_{C_1,C_2}\subset\mathcal{H}^{e,g}_{[C_1]}$ is the
subscheme of curves meeting $C_2$ in $r$ smooth points. If (b) holds
for $(C'_1,C_2)$ for a general point
$[C'_1]\in\mathcal{F}_{C_1,C_2}$, then (b) holds for $(C_1,C_2)$.

We first prove (b) in it's full generality. Step 1: we assume that
$C_2$ has smooth components. Without loss of generality, we assume
that $C_2$ is smooth irreducible. We can attach sufficiently many
very free rational curves $C'_i$ to $C_1$ at smooth points and get
$C'=C_1\bigcup(\cup_iC'_i)$ of degree $e'$ and genus $g'=g_1$, such
that (i) The deformation of $C'$ in $X$, fixing the points where the
curve meets $C_2$, is unobstructed, see \cite{Ko} II.7.; (ii) The
pairs $(C'_i,C_2)$ are all well positioned; (iii) For a general
point $[\tilde{C}]\in \mathcal{F}_{C',C_2}$, the pair
$(\tilde{C},C_2)$ is well-positioned. Note that (i) implies that
$[C']\in\mathcal{F}_{C',C_2}$ is a smooth point. Furthermore, (ii)
and (iii) imply that (b) holds for $(C'_i,C_2)$ and
$(\tilde{C},C_2)$. By observation II, we know that (b) holds for
$(C',C_2)$. Then by linearity, we know that (b) holds for
$(C_1,C_2)$. Step 2: general case. We can repeat the above argument
by attaching very free rational curve to one of the curves.

So for (a) with $C$ being general, the only case we need to verify
is when $\mathcal{U}^{e,g}=\emptyset$ or $[C]\in\mathcal{H}^{e,g}$
is not a smooth point. In either case we can attach very free
rational curves $C_i$ to $C$ at smooth points and get a curve
$C'=C\bigcup(\cup_iC_i)$. We may assume that the following
conditions holds: (i) The deformation of the curve $ C'$ in $X$ is
unobstructed and hence gives a smooth point
$[C']\in\mathcal{H}^{e',g'}$; (ii) the pair $(C,C_i)$ has finitely
many generalized secant lines $E_{i,s}$ of multiplicity $a_{i,s}$;
(iii) Each pair $(C_i,C_j)$ is well-positioned with secant lines
$L_{ij,s}$; (iv) Each curve $C_i$ is well-positioned with secant
lines $L_{i,s}$; (v) A general point
$[\tilde{C}]\in\mathcal{H}^{e',g'}_{[C']}$ corresponds to a
well-positioned curve $\tilde{C}\subset X$. By observation I, (i)
and (v) imply that (a) holds for $C'$, i.e.
$$
(2e+2\sum_i e_i-3)(C+\sum_i C_i)+\sum_{i}\sum_s L_{i,s}
+\sum_{i,j}\sum_s L_{ij,s} +\sum_i\sum_s a_{i,s}E_{i,s}
+\sum_{i=1}^{m} a_iE_i
$$
is a multiple of $h^{n-2}$. By (iv), we have
$$
(2e_i-3)C_i+\sum_s L_{i,s} = (\cdots)h^{n-2}
$$
By (ii), we have
$$
2e_iC+2eC_i+\sum_s a_{i,s}E_{i,s} = (\cdots)h^{n-2}
$$
By (iii), we have
$$
2e_iC_j+2e_jC_i+\sum_s L_{ij,s} = (\cdots)h^{n-2}, \quad i<j
$$
Let $i$ (and $j$) run over all possible choices and sum up the above
three equations, then take the difference of that with the first big
sum above, we get (a) for the curve $C$.

To prove (c), let $L_i$ be the (generalized) secant lines of $C$ and
$b_i$ be the corresponding multiplicities. Then by (a) for $L\cup
C$, we have
$$
(2e-1)(L+C)+\sum b_iL_i +\sum a_iE_i=(\cdots)h^{n-2}
$$
By (a) for the curve $C$, we get
$$
(2e-3)C+\sum b_iL_i = (\cdots)h^{n-2}
$$
Then we take the difference of the above two equations and get (c).
\end{proof}

\begin{proof}
(of Corollary \ref{chow group}). Let $C\subset X$ be an irreducible
curve on $X$. First assume that $C$ is smooth and has finitely many
secant lines. Then (a) implies that $(2e-3)C$ is an integral
combination of lines. We choose a line $L$ such that $(C,L)$ has
finitely many secant lines, then (b) implies that $2C$ is an
integral combination of lines. Hence $C$ itself is an integral
combination of lines in $\mathrm{CH}_1(X)$. If $C$ has infinitely
many secant lines. Then we attach sufficiently many very free
rational curve $C_i$ to $C$ and get $C'=C\bigcup(\cup_iC_i)$ such
that (i) the class each $C_i$ can be written as integral combination
of lines; (ii) The deformation of $C'$ in $X$ is unobstructed; $C'$
can be smoothed out to get a complete family $\pi:S\rightarrow B$ to
a smooth complete curve $B$; (iii) $S_{b_0}$ is the curve $C'$ and a
general fiber $S_b$ is well-positioned on $X$. By associating the
secant lines of $S_b$ to the point $b$, we get a rational map
$\varphi_0:B\dasharrow \Sym^{n_1}(F)$, where $F=F(X)$ is the Fano
scheme of lines on $X$. Pick a general line $L$ on $X$. By
associating the secant lines of $(L,S_b)$ to $b$, we have a rational
map $\phi_0:B\dasharrow \Sym^{n_2}(F)$. Both of the two maps can be
extended to the whole curve $B$ and get morphisms $\varphi$ and
$\phi$. Assume that $\varphi(b_0)=\sum a_i[L_i] $ and
$\phi(b_0)=\sum a'_i[E_i]$. Then the Lemma \ref{continuity lemma}
implies the following equalities hold in the Chow group,
$$
(2e'-3)C'+\sum a_iL_i = (\cdots)h^{n-2}
$$
and
$$
2C'+(2e'-1)L +\sum b_iE_i = (\cdots)h^{n-2}
$$
Hence $C'$ is an integral combination of lines. This implies that
$C$ itself is so.

When $n\geq 4$, the scheme, $F=F(X)$, of lines on $X$ is smooth
irreducible. This shows that algebraic equivalence agrees with
homological equivalence for 1-cycles on $X$. When $n\geq 6$, the
scheme $F=F(X)$ is a smooth Fano variety, see Theorem \ref{geomtry
of fano scheme}. This implies that $F$ is rationally connected, see
\cite{kmm} and \cite{cam}. Hence all lines on $X$ are rationally
equivalent to each other.
\end{proof}

\section{Fano scheme of lines and intermediate Jacobian of a cubic threefold}
In this section we first review some geometry of the Fano scheme of
lines on a cubic hypersurface and then collected some known results
about intermediate Jacobian of a cubic threefold.
\subsection{General theory on Fano schemes}
Let $X\subset\PP^n_k$ be a smooth cubic hypersurface over an
algebraically closed field. Let $F=F(X)$ be the Fano scheme of lines
on $X$. Let $V=\mathrm{H}^0(\PP^n,\calO(1))$. By definition, $F(X)$
can be viewed as a subscheme of $\mathrm{G}(2,V)$, the grassmannian
that parameterizes all lines on $\PP^n$. Let
\begin{equation}\label{universal line}
\xymatrix{
    \mathrm{G}(1,2,{V})\ar[r]^{f}\ar[d]_{\pi} &\mathrm{G}(1,{V})=\PP^n\\
    \mathrm{G}(2,V) &
  }
\end{equation}
be the universal family of lines on $\PP^n=\PP(V)$. Let
$s\in\Sym^3V$ be the homogeneous polynomial defining $X$. Then $s$
can also be viewed as a section $\tilde{s}$ of
$\pi_*f^*\calO_{\PP^n}(3)=\Sym^3(\mathscr{E}_2)$, where
$\mathscr{E}_2$ is the universal rank two quotient bundle on
$\mathrm{G}(2,V)$. Then $F\subset\mathrm{G}(2,V)$ is exactly the
zero locus of $\tilde{s}$.
\begin{thm}\label{geomtry of fano scheme}
(\cite{ak}) Notations as above, then the following are true\\
(a) $F$ is smooth irreducible of dimension $2n-6$;\\
(b) The dualizing sheaf $\omega_F\cong\calO_F(5-n)$, here
$\calO_F(1)$ comes from the Pl\"ucker embedding of
$\mathrm{G}(2,V)$. In particular, if $n\geq 6$ then $F$ is a Fano
variety.
\end{thm}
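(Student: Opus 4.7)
The plan is to realize $F$ as the zero scheme of the regular section $\tilde{s}\in H^0(G(2,V),\Sym^3\mathscr{E}_2)$ and deduce everything from standard properties of zero loci of regular sections of vector bundles on Grassmannians. Note that $\dim G(2,V)=2(n-1)=2n-2$ and $\mathrm{rk}\,\Sym^3\mathscr{E}_2=4$, so the expected dimension of $F$ is $2n-6$.

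For part (a), I would proceed in two steps. First, smoothness: at a point $[\ell]\in F$, the Zariski tangent space to the zero locus $Z(\tilde{s})$ fits into a short exact sequence involving $T_{[\ell]}G(2,V)=\Hom(\ell,V/\ell)$ and the derivative of $\tilde{s}$, and a local computation identifies $T_{[\ell]}F$ with $H^0(\ell,N_{\ell/X})$. For a smooth cubic, the normal bundle $N_{\ell/X}$ of a line $\ell\cong\PP^1$ on $X$ splits as a direct sum of line bundles of degrees summing to $2n-6-(-2)=n-4$ with each summand of degree at most $1$ (this last being the key smoothness input, which one verifies by a direct coordinate computation using that $X$ is smooth along $\ell$); hence $h^1(\ell,N_{\ell/X})=0$ and $h^0(\ell,N_{\ell/X})=2n-6$. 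Thus $F$ is smooth at every point, of the correct dimension $2n-6$, which means $\tilde{s}$ is a regular section everywhere. Second, irreducibility: since $F$ is the zero locus of a section of a globally generated vector bundle of rank $4$ on the smooth projective irreducible variety $G(2,V)$ of dimension $2n-2\geq 2\cdot 4$, Fulton--Lazarsfeld connectedness gives that $F$ is connected, and combined with smoothness this forces irreducibility.

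For part (b), I would apply the adjunction formula for the regular zero locus of a section of a vector bundle:
\[
\omega_F\cong \bigl(\omega_{G(2,V)}\otimes \det \Sym^3\mathscr{E}_2\bigr)\big|_F.
\]
The canonical bundle of the Grassmannian $G(2,n+1)$ is $\calO(-n-1)$ in the Pl\"ucker embedding, since $\omega_{G(r,V)}=\det(\mathscr{E}_r)^{-\dim V/r}$ up to the usual sign. For $\det \Sym^3\mathscr{E}_2$, writing the Chern roots of $\mathscr{E}_2$ as $a,b$, the Chern roots of $\Sym^3\mathscr{E}_2$ are $3a,\,2a+b,\,a+2b,\,3b$, whose sum is $6(a+b)=6c_1(\mathscr{E}_2)$, so $\det \Sym^3\mathscr{E}_2\cong \calO(6)$. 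Combining gives $\omega_F\cong \calO_F(-n-1+6)=\calO_F(5-n)$. For $n\geq 6$, $\omega_F^{-1}=\calO_F(n-5)$ is the restriction of an ample line bundle to a closed subvariety and hence ample, so $F$ is a Fano variety.

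The main obstacle is the pointwise smoothness/transversality of $\tilde{s}$ at every $[\ell]\in F$: showing that $N_{\ell/X}$ has no summand of degree $\geq 2$ when $X$ is a smooth cubic is not completely formal and requires a genuine local computation using the smoothness of $X$ along $\ell$. Once this is in hand, both the dimension statement and the adjunction computation follow cleanly, and connectedness via the Grassmannian upgrades this to irreducibility.
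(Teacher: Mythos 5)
The paper itself does not prove this statement: it is quoted from Altman--Kleiman \cite{ak}, and your overall strategy (realize $F$ as the zero scheme of $\tilde{s}\in \mathrm{H}^0(\mathrm{G}(2,V),\Sym^3\mathscr{E}_2)$, identify $T_{[\ell]}F$ with $\mathrm{H}^0(\ell,\mathscr{N}_{\ell/X})$, and get $\omega_F$ by adjunction) is exactly the standard one carried out there. Much of your sketch is fine, and the smoothness input is in fact easier than you fear: the normal bundle sequence $0\to\mathscr{N}_{\ell/X}\to\mathscr{N}_{\ell/\PP^n}\to\calO_\ell(3)\to 0$ exhibits $\mathscr{N}_{\ell/X}$ as a sub-bundle of $\calO_\ell(1)^{\oplus(n-1)}$, so every summand has degree $\le 1$ formally; since the rank is $n-2$ and the total degree is $n-4$, every summand then has degree $\ge -1$, whence $h^1=0$ and $h^0=2n-6$ with no coordinate computation needed. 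Two harmless slips: $2n-6-(-2)$ equals $2n-4$, not $n-4$ (the degree $n-4$ comes from $(n-1)-3$ in the normal bundle sequence), and $\omega_{\mathrm{G}(r,V)}\cong\det(\mathscr{E}_r)^{-\dim V}$, not $\det(\mathscr{E}_r)^{-\dim V/r}$, though the value $\calO(-n-1)$ you actually use is correct. You should also say a word about nonemptiness of $F$ (e.g.\ $c_4(\Sym^3\mathscr{E}_2)\neq 0$), without which ``smooth irreducible of dimension $2n-6$'' is vacuous.

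The genuine gap is the connectedness step. The Fulton--Lazarsfeld connectedness theorem for zero loci of sections requires the bundle to be \emph{ample}, and $\Sym^3\mathscr{E}_2$ is not ample on $\mathrm{G}(2,V)$: the line bundle $\calO_{\PP(\mathscr{E}_2)}(1)$ is the pullback of $\calO_{\PP(V)}(1)$ under the evaluation map $\mathrm{G}(1,2,V)\to\PP(V)$, whose fibres are $(n-1)$-dimensional, so $\mathscr{E}_2$, and hence $\Sym^3\mathscr{E}_2$, is only globally generated and nef. Global generation alone does not give connectedness of zero loci even when $\dim$ exceeds the rank (a section of $\calO(2,0)$ on $\PP^1\times\PP^1$ vanishes on two disjoint rulings), and the numerical condition $\dim\ge 2\,\mathrm{rk}$ you quote is not the hypothesis of a theorem that applies here. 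The Sommese-type refinement for $k$-ample spanned bundles needs $\dim \mathrm{G}(2,V)-\mathrm{rk}-k\ge 1$ with $k=n-1$, i.e.\ $n\ge 6$, so it misses precisely the threefold and fourfold cases. The standard repair --- and what \cite{ak} actually does --- is to resolve $\calO_F$ by the Koszul complex of $\tilde{s}$ and use Bott-type vanishing on the Grassmannian to conclude $\mathrm{H}^0(F,\calO_F)=\mathrm{H}^0(\mathrm{G}(2,V),\calO)=k$, which gives connectedness (and is also how the numbers in Proposition \ref{geometry of Fano surface}(d) are obtained); alternatively one may invoke the Debarre--Manivel connectedness theorem for Fano schemes of complete intersections, or the classical incidence-variety argument over the space of all smooth cubics. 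With connectedness supplied by one of these, your smoothness argument does upgrade it to irreducibility, and part (b) is correct as written.
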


\subsection{Intermediate Jacobian of a cubic threefold}
In this subsection we fix $X\subset \PP^4_{k}=\PP(V)$ being a smooth
cubic threefold and we assume that $\ch(k)\neq2$. Let $F=F(X)$ the
Fano surface of lines on $X$. Let
\begin{equation}\label{universal line cubic threefold}
\xymatrix{P\ar[r]^{f}\ar[d]_\pi &X\\
  F &}
\end{equation}
be the universal family of lines on $X$. Let $l\subset X$ be a line
on $X$ and $[l]$ be the corresponding point on $F$. Let
$\mathscr{N}_{l/X}$ be the normal bundle. Then we have the following
short exact sequence
$$
\xymatrix@C=0.5cm{
  0 \ar[r] & \mathscr{N}_{l/X} \ar[rr] && \mathscr{N}_{l/\PP^4} \ar[rr] && \mathscr{N}_{X/\PP^4}|_l \ar[r] & 0
  }.
$$
Since $\mathscr{N}_{l/\PP^4}\cong \calO(1)^{\oplus 3}$ and
$\mathscr{N}_{X/\PP^4}|_l\cong \calO(3)$, we get
$\mathscr{N}_{l/X}\cong \calO^2$ or $\calO(1)\oplus \calO(-1)$. The
line $l$ is of \textit{first type} if
$\mathscr{N}_{l/X}\cong\calO^2$; otherwise, $l$ is of \textit{second
type}. Let $D_0\subset F$ be the locus whose points are all lines of
second type. The condition for $l$ to be of second type is
equivalent to the existence of a plane $\Pi\subset\PP^4$ such that
$\Pi\cdot X=2l+l'$ for some other line $l'\subset X$, see Lemma 1.14
of \cite{murre}. A point $x\in X$ is called an \textit{Eckardt
point} if there are infinitely many lines through $x$. We know that
there are at most finitely many of such points, see the discussion
on p.315 of \cite{cg}.

\begin{prop}(\cite{cg}, \cite{murre}, \cite{ak})\label{geometry of Fano
surface}
The following are true\\
(a) We have a canonical isomorphism
$\Omega_F^1\cong\mathscr{E}_2|_F$;\\
(b) $D_0$ is a smooth curve on $F$ whose divisor class is $2K_F$;\\
(c) The ramification divisor of $f$ in diagram \eqref{universal line
cubic threefold} is given by $R=\pi^{-1}D_0\subset P$. If we write
$B=f(R)\subset X$, then $B$ is linearly equivalent to $30h$, where
$h$ is a hyperplane section of $X$;\\
(d) We have the following equalities
\begin{align*}
&h^0(F,\calO_F)=1, \qquad h^1(F,\calO_F)=5, \qquad h^2(F,\calO_F)=10,\\
&h^0(F,\omega_F)=10, \qquad h^1(F, \omega_F)=5, \qquad
h^2(F,\omega_F)=1
\end{align*}
\end{prop}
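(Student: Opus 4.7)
The plan is to exploit the embedding $F\hookrightarrow G:=\mathrm{G}(2,V)$ as the zero locus of the regular section $\tilde s\in H^0(G,\Sym^3\mathscr{E}_2)$ defining $X$, and to deduce all four parts from this description. Writing $\mathscr{S}$ for the tautological rank-three subbundle on $G$ so that $\Omega^1_G\cong\mathscr{S}\otimes\mathscr{E}_2^\vee$, part (a) would follow from the conormal sequence
$$0\to(\Sym^3\mathscr{E}_2)^\vee|_F\to\mathscr{S}\otimes\mathscr{E}_2^\vee|_F\to\Omega^1_F\to0,$$
combined with a separate geometric identification of the resulting rank-two quotient. The cleanest route is the deformation-theoretic isomorphism $T_{[l]}F\cong H^0(l,\mathscr{N}_{l/X})$, whose dual I would identify with $\mathscr{E}_2|_{[l]}=H^0(l,\calO_l(1))$ by transporting a local trivialization of the first-type normal bundle through the defining equation of $X$. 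One then checks that this identification globalizes canonically and is compatible with the determinant computation $\det\Omega^1_F=\det\mathscr{E}_2|_F=\calO_F(1)$, which matches Theorem~\ref{geomtry of fano scheme}(b) for $n=4$.

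For (b), a line $l$ is of second type exactly when $h^1(l,\mathscr{N}_{l/X})\geq 1$. I would realize $D_0$ as the degeneracy locus of a natural morphism of locally free sheaves on $F$, built from the second-order expansion of $\tilde s$ along the universal line $P\to F$: the appropriate $2$-jet of the cubic equation along $l$ produces a section of a line bundle on $F$ whose zero locus is exactly $D_0$. Identifying this line bundle as $\omega_F^{\otimes 2}$ via (a) and the Plücker identification $\omega_F\cong\calO_F(1)\cong\det\mathscr{E}_2|_F$ gives $D_0\equiv 2K_F$.

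For (c), I would apply Riemann--Hurwitz to $f\colon P\to X$. Since $P=\PP_F(\mathscr{E}_2|_F)$ in Grothendieck's convention, one has $K_{P/F}=\pi^*K_F-2\xi$, where $\xi=c_1(\calO_P(1))=f^*h$. Together with $K_X=-2h|_X$ this yields
$$R=K_P-f^*K_X=2\pi^*K_F-2\xi+2\xi=2\pi^*K_F=\pi^{-1}D_0,$$
where the last identification uses (b) and a direct check that $df$ drops rank along precisely the fibers over $D_0$. The class of $B=f(R)$ then follows from the projection formula: $B\cdot h^2=R\cdot\xi^2=2K_F\cdot\pi_*\xi^2=2K_F^2=90$, using $\pi_*\xi^2=c_1(\mathscr{E}_2|_F)=K_F$ and $K_F^2=45$. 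Since $h^3=3$ on $X$ and $f|_R$ is birational onto $B$, one concludes $B\sim 30h$ (linear equivalence follows because $\Pic(X)=\Z\cdot h$).

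For (d), the systematic approach is the Koszul resolution
$$0\to\wedge^4\mathscr{F}^\vee\to\wedge^3\mathscr{F}^\vee\to\wedge^2\mathscr{F}^\vee\to\mathscr{F}^\vee\to\calO_G\to\calO_F\to0,$$
with $\mathscr{F}=\Sym^3\mathscr{E}_2$, computing each $H^*(G,\wedge^i\mathscr{F}^\vee)$ by Borel--Weil--Bott on $\mathrm{G}(2,5)$. A quicker bookkeeping uses $\omega_F\cong\calO_F(1)$ from (a), giving $h^0(\omega_F)=h^0(F,\calO_F(1))=\dim H^0(G,\calO_G(1))=\binom{5}{2}=10$ (since $F$ is not contained in any Plücker hyperplane), combined with Noether's formula $12\chi(\calO_F)=K_F^2+c_2(F)=45+27=72$, which yields $\chi(\calO_F)=6$ and hence $h^1(\calO_F)=5$; the remaining three values come from Serre duality. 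The most delicate step is (b), specifically the clean realization of $D_0$ as the zero locus of a section of $\omega_F^{\otimes 2}$ in a way that works uniformly in characteristic $\neq 2$; this is the point at which the original arguments in \cite{cg,murre,ak} require the most care.
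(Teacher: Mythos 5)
You should first be aware that the paper does not reprove this proposition: parts (a), (b), (d) are quoted directly from Altman--Kleiman, Murre and Clemens--Griffiths, and only (c) is argued, by observing that $f$ is \'etale over $[l]\notin D_0$ (where $\mathscr{N}_{l/X}\cong\calO^2$ is globally generated) and ramified over $D_0$, and then computing $B\cdot l=2K_F\cdot D_l=6(D_l)^2=30$ for a general line $l$. Your treatment of (c) is a legitimately different and correct route: Riemann--Hurwitz gives $[R]=K_P-f^*K_X=2\pi^*K_F$, and combined with the support identification and (b) this pins down $R=\pi^{-1}D_0$; your degree count $B\cdot h^2=2K_F\cdot\pi_*\xi^2=2K_F^2=90$ with $h^3=3$ is equivalent to the paper's $B\cdot l=30$. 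Both versions share the same unstated hypothesis that $f|_R$ is generically injective onto $B$, so neither is more complete there. Your computation buys independence from the relations $K_F\equiv 3D_l$, $(D_l)^2=5$, at the price of needing $K_F^2=45$ and the irreducibility of $D_0$ to convert the class identity into an equality of divisors.

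The genuine gaps are in (b) and (d). For (b), the proposition asserts that $D_0$ is a \emph{smooth} curve, and your proposal never addresses smoothness; moreover the key step --- that the $2$-jet construction produces a section of a line bundle which is then ``identified'' with $\omega_F^{\otimes 2}$ --- is asserted rather than carried out, and as you yourself note this is exactly where the cited sources do the real work (Murre, Cor.~1.9 for smoothness; Clemens--Griffiths, Prop.~10.21 for the class). For (d), the ``quicker bookkeeping'' is circular at the step $h^0(F,\omega_F)=\dim H^0(G,\calO_G(1))=10$: non-degeneracy of $F$ in the Pl\"ucker embedding only gives injectivity of the restriction map, hence $h^0(F,\calO_F(1))\geq 10$; the equality requires linear normality of $F$ and the completeness of the hyperplane system, i.e.\ vanishing statements of the form $H^1(G,\mathscr{I}_F(1))=0$ that come precisely from the Koszul resolution you were trying to avoid (and $c_2(F)=27$ is also quoted without derivation, though it does follow from (a) by Schubert calculus). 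So parts (a), (c) are essentially sound as sketches, but (b) and (d) would need substantially more to stand on their own rather than by citation.
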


\begin{proof}
(a) is proved in \cite{ak} (Theorem 1.10); (d) is proved in
\cite{ak} (Proposition 1.15). For (b), we note that smoothness of
$D_0$ is Corollary 1.9 of \cite{murre} and the class of $D_0$ is
computed in \cite{cg} (Proposition 10.21). If $[l]\in D_0$, then
$\mathrm{H}^0(l,\mathscr{N}_{l/X})$ does not generate
$\mathscr{N}_{l/X}$. This implies that infinitesimally $l$ only
moves in the $\calO(1)$ direction of $\mathscr{N}_{l/X}$. Hence $f$
ramifies along $\pi^{-1}([l])$. If $[l]\notin D_0$, then
$\mathscr{N}_{l/X}$ is globally generated and $f$ is \'etale along
$\pi^{-1}([l])$. Since $X$ has Picard number one, to know the
divisor class of $B$, we only need to compute the intersection
number $B\cdot l$ for a general line $l\subset X$. By the projection
formula,
$$
B\cdot l=(f_*\pi^*D_0)\cdot l=D_0\cdot(\pi_*f^*l)=2K_F\cdot D_l,
$$
where $D_l\subset F$ is the divisor of all lines meeting $l$. It is
known (see \cite{cg}, Section 10) that $K_F$ is numerically
equivalent to $3D_l$ and $(D_l)^2=5$. Hence we have $B\cdot l=30$.
This proves (c).
\end{proof}

\begin{defn}
Let $T/k$ be a variety, then the group of \textit{divisorial
correspondence}, $\mathrm{Corr}(T)$ is defined to be
$$
\mathrm{Corr}(T)=\frac{\Div(T\times T)}{p_1^*\Div(T)+p_2^*\Div(T)}
$$
\end{defn}

For the Fano surface $F$, there is a natural divisorial
correspondence $I\in \mathrm{Corr}(F)$, called \textit{incidence
correspondence}. To be more precise, we have the universal line
$P\subset X\times F$. Let $p$ and $q$ be the two projections from
$X\times F\times F$ to $X\times F$ and $r$ be projection to $F\times
F$. Then
$$
I=r_*(p^*P\cdot q^*P-P_0)
$$
where $P_0=\Set{(x,[l],[l])}{x\in l}$. Equivalently, $I$ is the
closure of all pairs $(l_1,l_2)$ of distinct lines such that $l_1$
meets $l_2$. Then $I$ induces a homomorphism
$\lambda_I:\Alb(F)\rightarrow\Pic^0_F$.

Let $\mathrm{A}_1(X)$ be the group of algebraic 1-cycles on $X$ that
are algebraically equivalent to 0 modulo rational equivalence. Let
$\varphi: \mathrm{A}_1(X)\rightarrow A$ be a homomorphism from
$\mathrm{A}_1(X)$ to an abelian variety $A$. The homomorphism
$\varphi$ is said to be \textit{regular} if for any algebraic family
$Z\subset X\times T$ of curves on $X$ parameterized by $T$, the rule
$t\mapsto\varphi([Z_t]-[Z_{t_0}])$ induces a morphism from $T$ to
$A$. The homomorphism $\varphi$ is said to be \textit{universal} if
for any regular homomorphism $\varphi':\mathrm{A}_1(X)\rightarrow
A'$ to an abelian variety $A'$, there is a unique homomorphism
$\psi:A\rightarrow A'$ of abelian varieties such that
$\varphi'=\psi\circ\varphi$. By the main result of \cite{murre3},
there is a universal regular homomorphism
$\varphi_0:\mathrm{A}_1(X)\rightarrow J(X)$ from $\mathrm{A}_1(X)$
to an abelian variety $J(X)$. In \cite{murre2}, $J(X)$ is naturally
realized as a Prym variety. To be more precise, let $l\subset X$ be
a general line on $X$ and $\tilde\Delta$ be the smooth curve of
genus 11 that parameterizes all lines on $X$ meeting $l$. We use
$S_l$ to denote the surface swept out by lines parameterized by
$\tilde{\Delta}$. There is natural involution $\sigma$ on
$\tilde\Delta$. Here $\sigma([l_1])$ is defined to be the residue
line $[l_2]$ of $l\cup l_1$. Let $\Delta=\tilde\Delta/\sigma$ be the
quotient curve which is a smooth plane quintic, hence of genus 6.
The involution $\sigma$ induces an involution
$i:J(\tilde\Delta)\rightarrow J(\tilde\Delta)$. The Prym variety
associated to $\tilde\Delta/\Delta$ is defined to be
$\mathrm{Pr}(\tilde\Delta/\Delta)=\mathrm{Im}(1-i)\subset
J(\tilde\Delta)$. Theorem 5 of \cite{murre2} says that
$J(X)\cong\mathrm{Pr}(\tilde{\Delta}/\Delta)$. There is a natural
principal polarization on $\mathrm{Pr}(\tilde\Delta/\Delta)$ whose
theta divisor $\Xi$ is half of the restriction of the theta divisor
from $J(\tilde\Delta)$, see part iii of \cite{mumford}. This induces
a principal polarization on $J(X)$ whose theta divisor will be
denoted by $\Theta$. It is shown in Section IV of \cite{murre2} that
the principal polarization $\Theta$ essentially comes from the
Poincar\'e duality on $\mathrm{H}^3(X)$. Here
$\mathrm{H}^3(X)=\mathrm{H}^3(X,\Q_\ell)$ is the $\ell$-adic
cohomology with $\ell\neq\mathrm{char}(k)$.

\begin{defn}\label{defn of intermediate jacobian}
We define the \textit{intermediate Jacobian} of $X$ to be $J(X)$
together with the principal polarization $\Theta$.
\end{defn}

\begin{rmk}
When $k=\C$ the above definition agrees with the classical
definition using Hodge theory, see \cite{cg}. See \cite{murre3} for
a proof of this fact.
\end{rmk}

Suppose that we are given a family $Z\rightarrow T$ of algebraic
1-cycles on $X$. We have a natural homomorphism
$\Psi_T:\mathrm{A}_0(T)\rightarrow \mathrm{A}_1(X)$ by sending a
point of $T$ to the class of the 1-cycle it represents. Since
$\varphi_0:\mathrm{A}_1(X)\rightarrow J(X)$ is regular, the
composition $\varphi_0\circ\Psi_T:T\rightarrow J(X)$ is a morphism
and hence it induces a homomorphism
$$
\psi_T:\mathrm{Alb}(T)\rightarrow J(X)
$$
of abelian varieties. Both $\Psi_T$ and $\psi_T$ will be called the
\textit{Abel-Jacobi map}. In particular, we have the Abel-Jacobi
maps $\psi_F$ and $\Psi_F$ associated to the Fano surface $F$.

\begin{thm}(\cite{cg},\cite{murre2},\cite{beauville1},\cite{beauville2})
Let $X/k$ be a smooth cubic threefold and $\ch(k)\neq2$. Let
$F=F(X)$ be the Fano surface of lines on $X$. Then the following are true.\\
(a) The incidence correspondence $I\in\mathrm{Corr}(F)$ defines a
principal polarization on $\Alb(F)$. Namely,
$\lambda_I:\Alb(F)\rightarrow\Pic^0_F$ is an isomorphism.\\
(b) The homomorphism $\varphi_0:\mathrm{A}_1(X)\rightarrow J(X)$ is
an isomorphism of abelian groups. \\
(c) The homomorphism $\Psi_F$
induces a isomorphism
$$
\psi_F:\Alb(F)\rightarrow J(X)
$$
as principally polarized abelian varieties.\\
(d) $X$ is completely determined by $J(X)$ together with the
polarization.
\end{thm}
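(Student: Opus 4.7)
The plan is to treat (a)--(d) as a package whose heart is the Abel-Jacobi isomorphism (c), with (a) and (b) feeding into it and (d) being a separate Torelli-type statement. First I would establish (a) by direct computation: the incidence class $I \subset F \times F$ is the residual of the diagonal in $r_*(p^*P \cdot q^*P)$, so its class is expressible in tautological Chern classes of $\mathscr{E}_2|_F$ by the same secant-bundle bookkeeping as in Section 2. Having the class, I would verify that $\lambda_I : \mathrm{Alb}(F) \to \mathrm{Pic}^0_F$ is an isomorphism by checking that the induced bilinear form on $\mathrm{H}^1(F, \Q_\ell)$ is unimodular. The Hodge numerology of Proposition \ref{geometry of Fano surface}(d) gives $\dim \mathrm{Alb}(F) = 5 = \dim J(X)$, and a K\"unneth decomposition of $[I]$ together with a direct intersection number calculation on $F$ should show the form is indeed a principal polarization.

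For (b), surjectivity of $\varphi_0$ is essentially free from Corollary \ref{chow group}: any class in $\mathrm{A}_1(X)$ is represented by a 0-cycle of degree zero on $F$, which lifts to $\mathrm{Alb}(F)$ and maps to $J(X)$ via $\psi_F$. For injectivity, I would use the universality of $\varphi_0 : \mathrm{A}_1(X) \to J(X)$ combined with an infinitesimal computation: the differential of $\psi_F$ at the origin is the pairing on $\mathrm{H}^0(F, \Omega_F^1)^\vee$ induced by the universal line, which by Proposition \ref{geometry of Fano surface}(a) (namely $\Omega_F^1 \cong \mathscr{E}_2|_F$) is an isomorphism onto the corresponding cotangent space of $J(X)$. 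The main obstacle is controlling the kernel of $\psi_F$ globally, not merely infinitesimally; here I would invoke the isogeny statement from \cite{murre2} and use (a) to rule out a nontrivial kernel by dimension and polarization count.

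For (c), once (a) and (b) are in place, both $\mathrm{Alb}(F)$ and $J(X)$ are $5$-dimensional principally polarized abelian varieties and $\psi_F$ is an isogeny. Matching the polarizations reduces to checking that under $\psi_F^* : \mathrm{H}^3(X, \Q_\ell) \to \mathrm{H}^1(F, \Q_\ell)$, the Poincar\'e duality pairing on $\mathrm{H}^3(X)$ that defines $\Theta$ pulls back to the form induced by $I$. Both sides admit explicit descriptions via the universal line $P \subset X \times F$, and the match is a direct intersection computation on $F \times F$; here the Prym realization of $J(X)$ as $\mathrm{Pr}(\tilde{\Delta}/\Delta)$ enters essentially, since it is the Prym theta that must be identified with the pullback of the incidence polarization along $\psi_F$.

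Finally, (d) is the Torelli theorem for cubic threefolds. My plan is to recall the Clemens-Griffiths argument: $\Theta \subset J(X)$ has a unique singular point whose projectivized tangent cone is projectively isomorphic to $X$, so $(J(X), \Theta)$ determines $X$. The hard part, which I would cite rather than reprove, is the analysis of this singularity; the characteristic-free version relevant here is carried out in \cite{beauville1}.
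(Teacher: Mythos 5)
The paper treats this as a survey statement and proves it almost entirely by citation: (a) is Theorem 8 of \cite{murre2}; (b) follows from Beauville's Th\'eor\`eme 3.1 in \cite{beauville1} applied to the conic bundle $X_l\to\PP^2$ obtained by blowing up a general line, which identifies $\mathrm{Pr}(\tilde\Delta/\Delta)$ with $\mathrm{A}_1(X_l)\cong\mathrm{A}_1(X)$ as groups; (c) is Theorem 7 and Lemma 7 of \cite{murre2} (the latter gives $(f\times f)^*\tilde\Theta\equiv I$, which is exactly the polarization matching); (d) is \cite{cg} and \cite{beauville2}. Your plan to recompute the class of $I$ for (a) and to match polarizations cohomologically in (c) is a legitimate alternative to citing Murre, though both are only sketched.

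The genuine gap is in your argument for (b). You propose to get injectivity of $\varphi_0:\mathrm{A}_1(X)\to J(X)$ from ``universality plus an infinitesimal computation'' of the differential of $\psi_F$, supplemented by an isogeny statement. This conflates two different maps: $\psi_F:\mathrm{Alb}(F)\to J(X)$ is a homomorphism of abelian varieties, and showing it is an isomorphism (or even injective) says nothing about the kernel of $\varphi_0$, which is a homomorphism from the \emph{abstract group} $\mathrm{A}_1(X)$. A priori $\ker\varphi_0$ could contain classes that are killed by every regular homomorphism to an abelian variety --- the universal property of $J(X)$ gives no control over such classes, and no tangent-space or polarization count can detect them. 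This is precisely why the paper routes (b) through the conic bundle structure: Beauville's Th\'eor\`eme 3.1 is a statement about the group $\mathrm{A}_1(X_l)$ itself, identifying it with the points of the Prym variety, and the birational invariance $\mathrm{A}_1(X_l)\cong\mathrm{A}_1(X)$ then gives (b). Without an input of this kind (or the equivalent arguments in \cite{murre} and \cite{bloch-murre}), your plan for (b) does not close. A minor further point: the characteristic-free Torelli theorem in (d) is \cite{beauville2}, not \cite{beauville1}; the latter is the Prym/conic-bundle paper used for (b).
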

\begin{proof}
For (a), see Theorem 8 in \cite{murre2}. We use the realization of
$J(X)$ as $\mathrm{Pr}(\tilde\Delta/\Delta)$. Let $X_l$ be the
blow-up of $X$ along the general line $l\subset X$. Then the
projection from $l$ defines a morphism $p:X_l\rightarrow\PP^2$. This
makes $X_l$ into a conic bundle over $\PP^2$ and
$\tilde\Delta/\Delta$ is the associated double cover. Then by
Th\'eor\`eme 3.1. of \cite{beauville1}, we get
$\mathrm{Pr(\tilde\Delta/\Delta)}\cong\mathrm{A}_1(X_l)$. Since
$l\cong\PP^1$, we also have $\mathrm{A}_1(X_l)\cong\mathrm{A}_1(X)$.
Hence (b) follows easily. For (c), it is known that $\psi_F$ is an
isomorphism, see Theorem 7 of \cite{murre2}. Hence we only need to
track the polarizations. Fix $s_0\in F$ then we have the Albanese
morphism $\alpha:F\rightarrow \Alb(F)$ which sends $s_0$ to 0. Let
$f=\psi_F\circ\alpha$. Then by Lemma 7 of \cite{murre2}, we know
that $(f\times f)^*\tilde\Theta\equiv I$, where $\tilde\Theta\subset
J(X)\times J(X)$ is the divisorial correspondence on $J(X)$ that
induces the principal polarization. By passing to the Albanese
variety, the above fact says that the polarization on $J(X)$ pulls
back to the polarization $\lambda_I$. The statement (d), also known
as ``Torelli theorem", was first obtained in \cite{cg} for $k=\C$
and then generalized to the general case in \cite{beauville2}.
\end{proof}

\section{Intermediate jacobian of a cubic threefold as Prym-Tjurin
variety} In this section, we use the relations among 1-cycles to
study the intermediate jacobian of a smooth cubic threefold $X$. To
be more precise, we show that the intermediate jacobian is naturally
isomorphic to the Prym-Tjurin variety constructed from curves on
$X$. Throughout this section we fix $X\subset\PP^4_k$ to be a smooth
cubic threefold over an algebraically closed field. Let $F=F(X)$ be
the Fano surface of lines on $X$ and $I\subset F\times F$ be the
incidence correspondence.
\begin{defn}
Let $\Gamma$ be a smooth curve which might be reducible. Let
$J=J(\Gamma)$ be jacobian of $\Gamma$. Let $i:J\rightarrow J$ be an
endomorphism which is induced by a correspondence. Assume that $i$
satisfies the following quadratic equation
\begin{equation}\label{quadratic relation}
(i-1)(i+q-1)=0
\end{equation}
for some integer $q\geq 1$. Then we define the \textit{Prym-Tjurin
variety} associated to $i$ as follows
\begin{equation}\label{definition of prym-tjurin}
\mathrm{Pr}(C,i)=\mathrm{Im}(1-i)\subset J(C)
\end{equation}
\end{defn}

Now Let $C$ be a possibly reducible smooth curve on the cubic
threefold $X$. Assume that $C$ has only finitely many secant lines.
Let $\tilde{C}$ be the normalization of the curve that parameterizes
all lines meeting $C$. Hence we have a natural morphism
$\eta:\tilde{C}\rightarrow F$. Let $D(I)=(\eta\times\eta)^*I$ be the
pull back of the incidence correspondence of $F$. Let
$i_0:\tilde{J}=J(\tilde{C})\rightarrow\tilde{J}$ be the endomorphism
induced by $D(I)$. Let $[L]\in \tilde{C}$ be a general point and
$L_i$ be all the secant lines of the pair $(L,C)$. Let $U\subset
\tilde{C}$ be the dense open subset of lines $L$ such that $(L,C)$
has finitely many secant lines $L_i$. The association
$$
[L]\mapsto \sum [L_i]
$$
defines a correspondence $D_U$ on the dense open subset $U$ of
$\tilde{C}$. Let $D\subset \tilde{C}\times\tilde{C}$ be the closure
of $D_U$.

\begin{defn}
A smooth complete curve $C\subset X$ is \textit{admissible} if the
following conditions hold: (i) $C$ has only finitely many secant
lines; (ii) if a line $L$ meets $C$, it meets $C$ transversally;
(iii) $C$ meets the divisor $B\subset X$ (see Proposition
\ref{geometry of Fano surface}) transversally in smooth points of
$B$.
\end{defn}
The following lemma will be needed in later proofs
\begin{lem}\label{six lines}
Let $x\in X$ be a general point and $L_j$, $j=1,\ldots,6$, be the
six lines on $X$ passing through $x$. Then
$$
\sum_{j}^{6}L_j=2h^2
$$
in $\mathrm{CH}_1(X)$, where $h$ is the hyperplane class on $X$.
\end{lem}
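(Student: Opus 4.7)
The plan is to reduce the identity to a standard Picard-group relation on the minimal resolution of the tangent hyperplane section $Y=T_xX\cap X$. For a general $x$, this $Y$ is a cubic surface in $T_xX\cong\PP^3$ with a single $A_1$ singularity at $x$ and smooth elsewhere, and all six lines $L_1,\dots,L_6$ through $x$ lie on $Y$ because $L_j\subset T_xX$ automatically. Let $\pi\colon\tilde{Y}\to Y$ be the minimal resolution, contracting a single $(-2)$-curve $E$ to $x$; crepantness of Du Val resolutions gives $\pi^*K_Y=K_{\tilde{Y}}$.

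The weak del Pezzo $\tilde{Y}$ is classically realized as a blow-up $\beta\colon\mathrm{Bl}_{p_1,\dots,p_6}(\PP^2)\to\PP^2$, with $p_1,\dots,p_6$ lying on a smooth conic $C_0$ whose strict transform is $E$; the six exceptional divisors $e_j$ are precisely the strict transforms of the $L_j$. Writing $H=\beta^*\calO_{\PP^2}(1)$, the relation $K_{\tilde{Y}}=-3H+\sum e_j$ rearranges to
$$
\sum_{j=1}^{6} e_j = 3H + K_{\tilde{Y}}.
$$
The plan is then to push this identity forward under $(\iota\circ\pi)_*$, with $\iota\colon Y\hookrightarrow X$.

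Evaluating each piece: by crepantness $\pi_*K_{\tilde{Y}}=K_Y=-H_Y$, and any plane section $\Pi\cap Y=\Pi\cap X$ with $\Pi\subset T_xX$ a 2-plane represents $h^2$ on $X$, so $(\iota\pi)_*K_{\tilde{Y}}=-h^2$. For the second term, specialize a line in $\PP^2$ to $\ell_{12}$, the line through $p_1,p_2$: its strict transform $\tilde{\ell}_{12}=H-e_1-e_2$ is a $(-1)$-curve disjoint from $E$ (since $\tilde{\ell}_{12}\cdot E=0$), so $\pi$ maps it isomorphically to a line $M_{12}\subset Y$ not through the node, and $\pi_*H=M_{12}+L_1+L_2$. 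The crucial geometric observation is that $M_{12}$ is residual to $L_1+L_2$ in the plane section $\Pi_{12}\cap Y$ (where $\Pi_{12}\subset T_xX$ is the 2-plane spanned by $L_1$ and $L_2$), so $L_1+L_2+M_{12}=\Pi_{12}\cap X$ represents $h^2$ in $\mathrm{CH}_1(X)$, giving $(\iota\pi)_*H=h^2$. Combining the two pieces,
$$
\sum_{j=1}^{6}L_j = (\iota\pi)_*K_{\tilde{Y}} + 3(\iota\pi)_*H = -h^2 + 3h^2 = 2h^2.
$$

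The main technical point to verify is the geometric setup for a general $x$: the surface $Y$ has a single $A_1$ node at $x$ (no worse and nowhere else), the six $L_j$ exhaust the lines through that node on $Y$, and $\tilde{Y}$ admits the conic-based blow-up model with the $e_j$ matching the $\tilde{L}_j$. All three are standard facts in the classical theory of nodal cubic surfaces, but must be invoked carefully; once they are in hand, the Picard-group pushforward argument above is essentially a one-line calculation.
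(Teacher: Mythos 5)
Your argument is correct, but it takes a genuinely different route from the paper. The paper deduces the identity from its main relation for a pair of disjoint lines (Theorem \ref{main theorem}(b), which gives $2E_t+2L+\sum_{j=0}^{4}L_{t,j}=3h^2$ for a well-positioned pair) by specializing a general pair $(L_6,E_t)$ to the incident pair $(L_6,L_5)$; in the limit the five secant lines degenerate to the residual line $L_0$ of the plane spanned by $L_5,L_6$ together with $L_1,\dots,L_4$, and the conclusion follows from $L_0+L_5+L_6=h^2$. You instead bypass the secant-line machinery entirely and read the relation off the classical geometry of the one-nodal cubic surface $Y=T_xX\cap X$: the identity $\sum e_j=3H+K_{\tilde Y}$ on the minimal resolution pushes forward to $\sum L_j=3h^2-h^2$ once one checks $(\iota\pi)_*H=h^2$ (via $H=\tilde\ell_{12}+e_1+e_2$ and the plane section $L_1+L_2+M_{12}$) and $(\iota\pi)_*K_{\tilde Y}=\iota_*K_Y=-h^2$; I verified these pushforwards and they are right. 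What each approach buys: yours is self-contained and avoids having to justify the flat-limit behaviour of the five secant lines under specialization, at the cost of invoking the standard but nontrivial facts that for general $x$ the tangent hyperplane section has a single $A_1$ node at $x$, that the six lines of $X$ through $x$ are exactly the lines of $Y$ through the node, and that $\tilde Y$ is the blow-up of $\PP^2$ at six points on a conic with the $e_j$ matching the strict transforms of the $L_j$ (facts which you correctly flag as the points needing care, and which hold over any algebraically closed field for general $x$); the paper's proof stays entirely within the cycle-theoretic machinery it has already built and needs no input from the classification of singular cubic surfaces.
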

\begin{proof}
Fix a line $L=L_6$ passing through $x$. Let $E_t$, $t\in T$, be a
1-dimensional family of lines on $X$ such that $E_{t_0}=L_5$. By
choosing $E_t$ general enough, we may assume that $(L,E_t)$ is
well-positioned for $t$ close to but different from $t_0$. Let
$L_{t,j}$, $j=0,\ldots,4$, be the secant lines of $(L,E_t)$, $t\neq
t_0$. By (b) of Theorem \ref{main theorem}, we have
$$
2E_t+2L+\sum_{j=0}^{4}=3h^2
$$
Not let $t\to t_0$ and assume that $L_{t,0}$ specializes to the
residue line $L_0$ of $L_5\cup L_6$ and $L_{t,j}$ specializes to
$L_j$ for $j=1,2,3,4$. By taking limit, see \S11.1 of \cite{ful},
the above identity gives
$$
2L_5+2L_6+L_0+\sum_{j=1}^{4}=3h^2
$$
Since $L_0+L_5+L_6=h^2$, the lemma follows.
\end{proof}

\begin{thm}\label{J(X) as prym-tjurin}
Assume that $\ch(k)\neq 2$. Let $C=\cup C_s\subset X$ be an
admissible smooth curve of degree $e$. Assume that all components of
$C$ are rational. Let $\tilde{C}$ be the normalization of the curve
parameterizing lines meeting $C$ as above. Let
$i=i_0+1\in\End(\tilde{J})$. Then the following are
true.\\
(a) The endomorphism $i$ satisfies the quadratic relation
\eqref{quadratic relation} with $q=2e$. \\
(b) The correspondence $D\in\mathrm{Corr}(\tilde{C})$ is symmetric,
effective and without fixed point. The endomorphism $i$ is induced
by $D$.\\
(c) Assume that $\ch(k)\nmid q$. Then the associated Prym-Tjurin
variety $\mathrm{Pr}(\tilde{C},i)$ carries a natural principal
polarization whose theta divisor $\Xi$ satisfies the following
equation
$$
\tilde\Theta|_{\mathrm{Pr}(\tilde{C},i)}\equiv q\Xi
$$
where $\tilde\Theta$ is the theta divisor of $\tilde{J}$. \\
(d) Assume that $\ch(k)\nmid q$. Then the Abel-Jacobi map
$\psi_{\tilde{J}}:\tilde{J}\rightarrow J(X)$ factors through
$\mathrm{Pr}(\tilde{C},i)$ and induces an isomorphism
$$
u_C:\mathrm{Pr}(\tilde{C},i)\rightarrow J(X)
$$
as principally polarized abelian varieties.
\end{thm}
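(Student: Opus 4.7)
My plan is to establish the parts in the order (b), (a), (c), (d), with the key input for both (a) and (d) being a factorization of the endomorphism $i_0$ through the Albanese of the Fano surface $F$. For part (b), symmetry of $D$ follows since being a secant line of the pair $(l_1, C)$ and of $(l_2, C)$ are symmetric conditions on $(l_1, l_2)$; effectiveness is built into the closure construction; and no fixed points because a line $L$ itself cannot be a secant line of $(L, C)$. The identification $i_D = i_0 + 1 = i$ reduces to showing $D \equiv D(I) + \Delta$ in $\mathrm{Corr}(\tilde{C})$: geometrically both $D$ and $D(I)$ record pairs of distinct meeting lines, and the closure behavior along the diagonal (coupled with the exclusion of pairs meeting on $C$ and of secant lines of $C$) contributes precisely one copy of $\Delta$.

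For (a), the central observation is that $i_0$ is a \emph{pullback} endomorphism. Writing $\eta : \tilde{C} \to F$ for the natural morphism, the identity $D(I) = (\eta \times \eta)^* I$ yields a factorization
\begin{equation*}
  i_0 \;=\; \eta^* \circ \lambda_I \circ \eta_*,
\end{equation*}
where $\eta_* : \tilde{J} \to \Alb(F)$ is the Albanese pushforward, $\lambda_I : \Alb(F) \to \Pic^0(F)$ the isomorphism of Theorem 5.6(a), and $\eta^* : \Pic^0(F) \to \tilde{J}$ is pullback. Consequently $\ker(i_0) \supset \ker(\eta_*) = \ker(\psi)$, so $i - 1$ vanishes on $\ker(\psi)$. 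In parallel, applying Theorem 4.1(c) to the pair $(L, C)$ with $L$ varying over $\tilde{C}$ yields the relation
\begin{equation*}
  (2e-1)L + 2C + \sum_{\alpha} E_\alpha(L) \;=\; (3e-2)\,h^2 \quad \text{in } \mathrm{CH}_1(X),
\end{equation*}
and passing to the Abel-Jacobi image gives $\psi \circ i = -(q-1)\psi$, hence $\mathrm{Im}(i + q - 1) \subset \ker(\psi) \subset \ker(i - 1)$. For any $\alpha \in \tilde{J}$, the element $(i+q-1)\alpha$ thus lies in $\ker(i-1)$, which yields $(i-1)(i+q-1)\alpha = 0$ and proves (a).

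For (c), with the quadratic relation and $D$ satisfying the Kanev hypotheses (symmetric, effective, fixed-point-free), the standard Prym-Tjurin theory produces a principal polarization $\Xi$ on $\mathrm{Pr}(\tilde{C}, i) = \mathrm{Im}(i - 1)$ satisfying $\tilde{\Theta}|_{\mathrm{Pr}} \equiv q\,\Xi$, assuming $\mathrm{char}(k) \nmid q$. For (d), the surjectivity of $\psi$ (since $\eta(\tilde{C})$ sweeps out enough of $F$ to generate $\Alb(F)$) together with the self-duality $\Alb(F) \cong \Pic^0(F)$ via $\lambda_I$ forces $\ker(\eta^*) = 0$, so $\ker(i-1) = \ker(\psi)$ and $\psi$ descends to an isogeny $u_C : \mathrm{Pr}(\tilde{C}, i) \to J(X)$. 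This is an isomorphism by dimension count, and tracing how the incidence $I$ on $F$ realizes the polarization on $J(X)$ via Theorem 5.6(c) shows that $u_C$ identifies $\Xi$ with $\Theta_{J(X)}$.

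The main obstacle is the rigorous justification of the factorization $i_0 = \eta^* \circ \lambda_I \circ \eta_*$ via the theory of correspondences and Jacobians, together with the vanishing $\ker(\eta^*) = 0$. A secondary subtlety is the precise identification $D \equiv D(I) + \Delta$ in $\mathrm{Corr}(\tilde{C})$, which hinges on careful analysis of the boundary behavior of the closure $D$ near pairs where one component is a secant of $C$ or the two lines meet on $C$. Once these functorial points are in place, the quadratic relation and the polarization matching follow cleanly from the cycle-theoretic input provided by Theorem 4.1(c).
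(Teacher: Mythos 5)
Your skeleton coincides with the paper's: the factorization $i_0=\eta^*\circ\lambda_I\circ\eta_*$, the cycle relation \eqref{relation 3}, and Kanev's Prym--Tjurin machinery are exactly the inputs used there. But there are genuine gaps. In (b), the identity is not ``$D\equiv D(I)+\Delta$ via closure behaviour along the diagonal.'' The difference $D(I)-D$ is an honest non-diagonal correspondence: it sends $[L]$ to the five \emph{other} lines $[L_1],\dots,[L_5]$ through the point $x=L\cap C$ (these meet both $L$ and $C$ but are not secant lines of the pair). It induces $-1$ on $\tilde J$ only because $[L]+\sum_{j=1}^{5}[L_j]$ is the fibre of $\eta^{-1}(C)\to C$ over $x$, and these fibres are rationally equivalent as $x$ varies precisely because the components of $C$ are rational --- a hypothesis your argument never invokes. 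The same omission infects (a): $D(I)([L])$ contains the five lines $L_j$ in addition to the secant lines of the pair, so \eqref{relation 3} alone does not give $\psi\circ i=-(q-1)\psi$; one also needs the class of $\sum_{j=1}^{5}L_j$, i.e.\ Lemma \ref{six lines} ($\sum_{j=1}^{6}L_j=2h^2$). (If instead you compute with $D$ rather than $D(I)+1$, you are presupposing (b), whose proof is the flawed step.) Your fixed-point-freeness argument is also too quick: the diagonal is disjoint from $D_U$, but a diagonal point could lie in the closure $\overline{D_U}$; excluding this uses admissibility condition (iii), since a fixed point forces $\Pi\cdot X=2L+L'$, hence $L\subset B$ with $C$ tangent to $B$ at $x$.

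In (d), ``an isomorphism by dimension count'' is not a valid step: an isogeny between abelian varieties of equal dimension need not be an isomorphism, and $\ker(\eta^*)=0$ requires $\ker(\eta_*)$ to be connected, which is not automatic from surjectivity. The paper instead verifies Welters' criterion $\psi(D([L]))=(1-q)\psi([L])+\mathrm{const}$ (again using the cycle relations), deduces $\psi_*(\tilde C)\equiv(q/24)\theta^4$, and applies Theorems 5.4 and 5.6 of \cite{kanev} to conclude that $(J(X),\Theta)$ is a direct summand of $(\mathrm{Pr}(\tilde C,i),\Xi)$, whence equality by surjectivity of $\eta^*\circ\lambda_I$. ``Tracing how the incidence realizes the polarization'' is exactly the step that requires this machinery; as written it is an assertion, not a proof.
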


\begin{proof}
To prove (a), we consider the following commutative diagram
\begin{equation}\label{diagram prym-tjurin}
\xymatrix{
 \tilde{J}\ar[r]^{i_0+q} & \tilde{J}\ar[r]^{\eta_*\quad}\ar[dd]^{i_0} &\Alb(F)\ar[rd]^{\psi_F}\ar[dd]^{\lambda_I}
 &\\
 & & &J(X)\ar[ld]\\
 &\tilde{J} &\Pic_F^0\ar[l]_{\eta^*} &
}
\end{equation}
Here the map $J(X)\rightarrow \Pic_F^0$ is induced by the map
$\mathrm{A}_1(X)\rightarrow \Pic(F)$ which sends a curve on $X$ to
the divisor (on $F$) of all lines meeting the curve. Let
$[L],[L']\in \tilde{C}$ be two general points from the same
component such that the pair $(C,L)$ (resp. $(C,L')$) is
well-positioned. Let $E_j$ (resp. $E'_j$), $j=1,\ldots,5e-5$, be all
the secant lines of the pair $(C,L)$ (resp. $(C,L')$). Let $L_j$
(resp. $L'_j$), $j=1,\ldots,5$, be the other five lines on $X$
passing through $C\cap L$ (resp. $C\cap L'$). Then by
\eqref{relation 3} (and also \eqref{relation 2} if $C$ is
disconnected), we have
$$
(2e-1)L+\sum_{j=1}^{5e-5}E_j=(2e-1)L'+\sum_{j=1}^{5e-5}E'_j.
$$
Note that $i_0([L])=D(I)([L])=\sum_{j=1}^{5} [L_j]+\sum [E_j]$ and
that $L+\sum_{j=1}^{5} L_j=2h^2$, see Lemma \ref{six lines}. Then we
get
\begin{align*}
\psi_F\circ\eta_*\circ(i_0+q)([L]-[L']) &= 2e(L-L')+ (\sum L_j-\sum
L'_j) + (\sum E_j-\sum E'_j)\\
&= 2e(L-L') + (L'-L) + (1-2e)(L-L')\\
&=0
\end{align*}
Since $\psi_F$ is isomorphism, this implies that
\begin{equation}\label{condition b of kanev}
\eta_*\circ (i_0+q)=0
\end{equation}
Hence
$$
i_0(i_0+q)=\eta^*\circ\lambda_I\circ\eta_*\circ(i_0+q)=0,
$$
which is the same as the quadratic equation \eqref{quadratic
relation}. By construction, we have
$$
(D(I)-D)([L]-[L'])=\sum [L_j]-\sum [L'_j]=-([L]-[L']) +([L]+\sum
[L_j]) - ([L']+\sum [L'_j])
$$
We also know that $([L]+\sum [L_j]) - ([L']+\sum [L'_j])$ is the
pull-back of $x-x'$ on $C$, where $x=L\cap C$ and $x'=L'\cap C$.
Since $C$ has rational components, the class of $x-x'$ is trivial.
Hence we get
$$
(D(I)-D)([L]-[L'])=-([L]-[L']).
$$
This implies that $i$ is induced by $D$. By definition, $D$ is
symmetric and effective. We still need to show that $D$ is fixed
point free. If $[L]\in\tilde{C}$ is a fixed point of $D$. Let $x$ be
the point of $L$ meeting $C$. This is well defined even if $L$ is a
secant line of $C$. This is because when $L$ a secant line of $C$,
the point $[L]$ of $f^{-1}C$ is a nodal point, here $f$ is the
morphism in \eqref{universal line cubic threefold}. Hence $[L]$
gives rise to two points $[L]$ and $[L']$ of the normalization
$\tilde{C}$ corresponding to the choice of a point of $L\cap C$. Let
$\Pi$ be the plane spanned by the tangent direction of $C$ at $x$
and the line $L$. The condition $[L]\in D([L])$ implies that
$\Pi\cdot X=2L+L'$. This implies that $L\subset B$ and $C$ is
tangent to $B$ at the point $x$. But this is not allowed since $C$
is admissible. The statement (c) follows from (b) by applying
\cite{kanev} (Theorem 3.1.). Furthermore, (c) implies that
$\ker(i-1)$ is connected and equal to $\mathrm{im}(i+q-1)$, see
Lemma 7.7 of \cite{bloch-murre}. Hence in diagram \eqref{diagram
prym-tjurin}, the morphism $\eta_*$ factors through
$\mathrm{Pr}(\tilde{C},i)$ via $i_0$. In this way we get
$u'_C:\mathrm{Pr}(\tilde{C},i)\rightarrow \Alb(F)$ such that
$u'_C\circ i_0=\eta_*$. We take $u_C=\psi_F\circ u'_C$. Hence we
only need to prove that $u'_C$ induces an isomorphism between
principally polarized abelian varieties. To do this, let
$\psi:\tilde{C}\rightarrow A=\Alb(F)$ be the natural morphism
(determined up to a choice of a point on $\tilde{C}$). After
identifying $A$ with $J(X)$, we have
\begin{equation}\label{condition i of Masiewicki}
\psi(D([L]))= \sum \psi([L_i])=\sum
L_i=(1-2e)L-2C+\text{const.}=(1-q)\psi([L])+\text{const.}
\end{equation}
Also notice that $\eta_*$ is epimorphism (since
$\eta(\tilde{C})$ is ample on $F$). By a theorem of Welters (see
Theorem 5.4 of \cite{kanev}), this condition together with
\eqref{condition b of kanev} implies
\begin{equation}\label{condition ii of Masiewicki}
\psi_*(\tilde{C})\equiv (q/24)\theta^4
\end{equation}
where $\theta$ is a theta divisor of $A$. Then by Theorem 5.6 of
\cite{kanev}, we know that $(A,\theta)$ is a direct summand of
$(\mathrm{Pr}(\tilde{C},i),\Xi)$. But we know that
$\eta^*\circ\lambda_I$ is surjective, hence $u'_C$ has to be an
isomorphism.
\end{proof}

Now let $C_1$ and $C_2$ be two smooth admissible rational curves on
$X$ which do not meet. Let $C=C_1\cup C_2$ be the disjoint union of
two admissible curves and assume that $C$ is again admissible. We
use the a subscription for notations of the corresponding curve. So
we have $\tilde{C}_1$, $\tilde{C}_2$, $\tilde{C}=\tilde{C}_1\cup
\tilde{C}_2$ and correspondingly $\tilde{J}_1$, $\tilde{J}_2$,
$\tilde{J}=\tilde{J}_1\oplus \tilde{J}_2$. Assume that $\ch(k)\nmid
q_1q_2$. The correspondence $D$ on $\tilde{C}$ can be written as
$$
D=\begin{pmatrix} D_{1} &D_{12}\\
  D_{21} &D_{2} \end{pmatrix}
$$
Let $[L]\in \tilde{C}_1$, then $D_{21}([L])=\sum [L_j]$ where $L_j$
are all secant lines of $(C_2,L)$. We also see that
$^tD_{12}=D_{21}$.

\begin{prop}
Let notations and assumptions be as above. Then the following hold\\
(a) The induced morphism $D_{21}:\tilde{J}_1\rightarrow \tilde{J}_2$
fits into the following commutative diagram
$$
\xymatrix{
 \tilde{J}_1\ar[r]^{D_{21}}\ar[d]_{i_1+q-1} &\tilde{J}_2
 \ar[d]^{i_2-1}\\
 \tilde{J}_1\ar[r]^{-D_{21}} &\tilde{J}_2
}
$$
(b) The following equalities hold
$$
 D_{21}(i_1+q_1-1)=(i_2+q_2-1)D_{21}=0
$$
Also we have
$\mathrm{Im}(D_{21})=\mathrm{Pr}(\tilde{C}_2,i_2)\subset\tilde{J}_2$
and the endomorphism $D_{21}$ factors through
$i_1-1:\tilde{J}_1\rightarrow\mathrm{Pr}(\tilde{C}_1,i_1)$ and
induces an isomorphism
$$
t_{21}:(\mathrm{Pr}(\tilde{C}_1,i_1),\Xi_1)\rightarrow
(\mathrm{Pr}(\tilde{C}_2,i_2),\Xi_2)
$$
of principally polarized abelian varieties.\\
(c) The isomorphism $t_{21}$ is compatible with $u_{C_1}$ and
$u_{C_2}$, namely $u_{C_2}\circ t_{21}=u_{C_1}$.
\end{prop}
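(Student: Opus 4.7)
The plan is to deduce all three parts by combining the quadratic identity of Theorem~\ref{J(X) as prym-tjurin}(a) applied to the admissible union $C = C_1 \cup C_2$ with the pair relation \eqref{relation 2} applied to $(L, C_2)$ for varying $[L] \in \tilde{C}_1$. First I would observe that, since $C_1 \cap C_2 = \emptyset$, the curve parameterizing lines on $X$ that meet $C$ is $\tilde{C}_1 \sqcup \tilde{C}_2$, and for a generic $[L] \in \tilde{C}_1$ the line $L$ misses the finitely many secants of the pair $(C_1, C_2)$, so $D([L])$ splits as the disjoint union of the secants of $(L, C_1)$ and of $(L, C_2)$. This identifies the diagonal blocks with the individual Prym-Tjurin endomorphisms, $D_1 = i_1$ and $D_2 = i_2$. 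With $q = q_1 + q_2 = 2(e_1+e_2)$ being the value attached to $C$, part (a) then reads off as the $(2,1)$-entry of the matrix identity $(i-1)(i+q-1) = 0$, namely $D_{21}(i_1 + q - 1) + (i_2 - 1) D_{21} = 0$.

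For part (b), I would apply \eqref{relation 2} to the pair $(L, C_2)$ (with $r = 0$) to obtain $\sum_j L_j = -q_2 L - 2 C_2 + 3 e_2 h$ in $\mathrm{CH}_1(X)$, and then compose with the factorization $\psi_F \circ \eta_{i*} = u_{C_i} \circ (i_i - 1)$ coming from Theorem~\ref{J(X) as prym-tjurin}(d) to obtain the key identity
$$
u_{C_2}(i_2 - 1) D_{21} = -q_2\, u_{C_1}(i_1 - 1). \qquad (\ast)
$$
The core step is then to show $D_{21}(i_1 + q_1 - 1) = 0$. From part (a), $(i_2 - 1) D_{21}(i_1 + q_1 - 1) = -D_{21}(i_1 + q - 1)(i_1 + q_1 - 1)$, and a short expansion using $(i_1 - 1)(i_1 + q_1 - 1) = 0$ gives $(i_1 + q - 1)(i_1 + q_1 - 1) = q(i_1 + q_1 - 1)$ in $\End(\tilde{J}_1)$, hence
$$
(i_2 + q - 1)\, D_{21}(i_1 + q_1 - 1) = 0.
$$
Because the characteristic roots of $i_2$ on the Tate module are $1$ and $1 - q_2$, the endomorphism $i_2 + q - 1$ has roots $q$ and $q_1$, both nonzero; hence it is an isogeny with finite kernel, and any morphism from the connected abelian variety $\tilde{J}_1$ into that kernel must vanish. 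Substituting back into (a) also yields $(i_2 + q_2 - 1) D_{21} = 0$.

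These two vanishings say that $D_{21}$ kills $\ker(i_1 - 1)$ and lands in $\mathrm{Im}(i_2 - 1) = \mathrm{Pr}(\tilde{C}_2, i_2)$, so it factors uniquely as $D_{21} = \bar{D}_{21} \circ (i_1 - 1)$ for some $\bar{D}_{21}\colon \mathrm{Pr}(\tilde{C}_1, i_1) \to \mathrm{Pr}(\tilde{C}_2, i_2)$. Squaring the quadratic relation shows that $(i_2 - 1)$ acts as multiplication by $-q_2$ on $\mathrm{Pr}(\tilde{C}_2, i_2)$, so $(\ast)$ collapses to $-q_2\, u_{C_2} \bar{D}_{21}(i_1 - 1) = -q_2\, u_{C_1}(i_1 - 1)$; dividing by $-q_2$ (using $\ch(k) \nmid q_2$) and cancelling the surjection $(i_1 - 1)\colon \tilde{J}_1 \twoheadrightarrow \mathrm{Pr}(\tilde{C}_1, i_1)$ gives $u_{C_2} \bar{D}_{21} = u_{C_1}$ on $\mathrm{Pr}(\tilde{C}_1, i_1)$. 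Setting $t_{21} := \bar{D}_{21}$ then proves (c); and since $t_{21} = u_{C_2}^{-1} u_{C_1}$, it is automatically an isomorphism of principally polarized abelian varieties by Theorem~\ref{J(X) as prym-tjurin}(d), while $\mathrm{Im}(D_{21}) = t_{21}(\mathrm{Pr}(\tilde{C}_1, i_1)) = \mathrm{Pr}(\tilde{C}_2, i_2)$ completes (b). The hard part will be producing the algebraic identity forcing $D_{21}(i_1 + q_1 - 1)$ into $\ker(i_2 + q - 1)$, which requires simultaneously invoking the combined quadratic relation on $\tilde{J}$ and the individual one on $\tilde{J}_1$.
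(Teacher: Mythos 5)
Your argument is correct and lands all three parts, but it diverges from the paper's proof at two substantive points, so a comparison is worthwhile. The common ground is the setup: both proofs write the quadratic relation of Theorem \ref{J(X) as prym-tjurin}(a) for the union $C_1\cup C_2$ in block-matrix form and read part (a) off the $(2,1)$-entry. The first divergence is in establishing the two vanishings $D_{21}(i_1+q_1-1)=0$ and $(i_2+q_2-1)D_{21}=0$. The paper works with the decompositions $\tilde{J}_j=P_j+Q_j$ on which $i_j$ acts by the scalars $1-q_j$ and $1$: it first notes that $i_1+q-1$ is surjective, so part (a) forces $\mathrm{Im}(D_{21})\subseteq P_2$ (whence $(i_2+q_2-1)D_{21}=0$), and then restricts the $(2,1)$-identity to $Q_1$ to get $q_1D_{21}|_{Q_1}=0$. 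You instead stay entirely in the endomorphism algebra: multiplying the $(2,1)$-identity on the right by $i_1+q_1-1$ and using the individual quadratic relation on $\tilde{J}_1$ gives $(i_2+q-1)D_{21}(i_1+q_1-1)=0$, and you dispose of the left factor by observing that $i_2+q-1$ has eigenvalues $q$ and $q_1$ on the Tate module, hence is an isogeny; both computations are valid and of comparable length. The second, more interesting divergence is how $t_{21}$ is shown to be an isomorphism of principally polarized abelian varieties: the paper extracts $(D_{12}D_{21})|_{P_1}=q_1q_2$ from the diagonal entries of \eqref{matrix equation} and concludes $t_{12}t_{21}=t_{21}t_{12}=1$, treating the compatibility (c) as a separate diagram chase afterwards; you reverse the logical order, first proving (c) via the Abel--Jacobi identity $\psi_{\tilde{J}_2}\circ D_{21}=-q_2\,\psi_{\tilde{J}_1}$ (a direct consequence of \eqref{relation 2} applied to the pairs $(L,C_2)$) and then deducing that $t_{21}=u_{C_2}^{-1}\circ u_{C_1}$ is an isomorphism of ppav's from Theorem \ref{J(X) as prym-tjurin}(d). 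Your route never uses $D_{12}$ or the diagonal entries, and it delivers the polarization-preserving statement explicitly (the paper leaves that step implicit, as a consequence of (c) and (d)), at the cost of making (b) depend on the full strength of (d) rather than keeping it internal to $\End(\tilde{J})$. One shared soft spot, not a gap specific to your write-up: both proofs pass from ``$D_{21}$ kills $\mathrm{Im}(i_1+q_1-1)$'' to ``$D_{21}$ factors through $i_1-1$,'' which requires $\ker(i_1-1)$ to be connected; this is exactly Lemma 7.7 of \cite{bloch-murre} under the standing hypothesis $\ch(k)\nmid q_1q_2$, and is the same appeal the paper makes in the proof of Theorem \ref{J(X) as prym-tjurin}(d).
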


\begin{proof}
By (a) of Theorem \ref{J(X) as prym-tjurin}, we know that
$(D-1)(D+q-1)=0$ as an endomorphism of $\tilde{J}$. This can be
written in the following matrix form,
\begin{equation}\label{matrix equation}
\begin{pmatrix}
q_2(i_1-1)+D_{12}D_{21} &(i_1-1)D_{12}+D_{12}(i_2+q-1)\\
D_{21}(i_1+q-1)+(i_2-1)D_{21} &D_{21}D_{12}+q_1(i_2-1)
\end{pmatrix}
=0
\end{equation}
In particular, we have $D_{21}(i_1+q-1)+(i_2-1)D_{21}=0$. This is
the same as the commutativity of the diagram in (a). If we write
$P_1=\mathrm{Pr}(\tilde{C}_1,i_1)$ and
$Q_1=\mathrm{Im}(i_1+q-1)\subset \tilde{J}_1$. Then
$\tilde{J}_1=P_1+Q_1$ and $i_1|_{P_1}=1-q_1$, $i_1|_{Q_1}=1$, see
\cite{bloch-murre}. Hence the morphism $i_1+q-1$ is surjective. This
implies that $\mathrm{Im}(D_{21})\subset
P_2=\mathrm{Pr}(\tilde{C}_2,i_2)$, which implies
$(i_2+q_2-1)D_{21}=0$. Hence we get the following commutative
diagram
$$
\xymatrix{
 \tilde{J}_1\ar[r]^{D_{21}}\ar[d]_{i_1+q-1} &P_2
 \ar[d]^{-q_2}\\
 \tilde{J}_1\ar[r]^{-D_{21}} &P_2
}
$$
Thus $q_2D_{21}(\alpha)=D_{21}(i_1+q-1)(\alpha)=D_{21}(q\alpha)$,
i.e. $q_1D_{21}(\alpha)=0$, for all $\alpha\in Q_1$. This implies
that $D_{21}(i_1+q_1-1)=0$. Hence $D_{21}$ factors through $P_1$ and
gives the morphism $t_{21}:P_1\rightarrow P_2$, i.e.
$t_{21}\circ(i_1-1)=D_{21}$. One easily verifies that
$D_{21}|_{P_1}=-q_1t_{21}$. We have similar identities for $D_{12}$
and $t_{12}$. Note that the equations of the diagonal entries of
\eqref{matrix equation} lead to the following
$$
(D_{12}D_{21})|_{P_1}=q_1q_2,\quad (D_{21}D_{12})|_{P_2}=q_1q_2
$$
These relations give $t_{21}t_{12}=1$ and $t_{12}t_{21}=1$. The
compatibility of $t_{21}$ and $u_i=u_{C_i}$ is an easy diagram
chasing from
$$
\xymatrix{
 \tilde{J}_1\ar[d]_{i_1-1} \ar[r]^{D_{21}} &P_2\ar[d]^{u_2}\ar[r]^{j_2} &\tilde{J}_2\ar[d]^{\psi_{\tilde{J}_2}}\\
 P_1\ar[r]^{u_1}\ar[ru]_{t_{21}} &J(X)\ar[r]^{-q_2} &J(X)
}
$$
Note that $u_1\circ(i_1-1)=\psi_{\tilde{J}_1}$ and the up left
triangle, the right square and the outside square are all
commutative. Then one easily sees that $q_2(u_2\circ t_{21}-u_1)=0$,
which implies $u_2\circ t_{21}=u_1$.
\end{proof}

\begin{cor}
Let $C\subset X$ be a smooth admissible rational curve and
$\ch(k)\nmid q$. Let $L$ and $L'$ be two lines without meeting $C$.
Let $L_i$ (resp. $L'_i$) be all the secant lines of the pair $(L,C)$
(resp. $(L',C)$). Then
$$
\sum [L_i]-\sum [L'_i]\in \mathrm{Pr}(\tilde{C},i)\subset \tilde{J}
$$
\end{cor}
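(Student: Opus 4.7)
\medskip

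\noindent\textbf{Proof proposal.} The plan is to deduce the corollary directly from the preceding proposition on the Abel--Jacobi map for a family of curves on $X$. That proposition applies to any family $\mathscr{C}/T$ for which general fibres form a pair with $C$ having only finitely many secant lines, and it asserts that the induced morphism $\Psi_{T,\tilde{J}}\colon T\to\tilde{J}$ has image in $\mathrm{Pr}(\tilde{C},i)$ (understood as a translate of the subabelian variety).

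First, I would take $T\subset F=F(X)$ to be the Zariski open subset of the Fano surface consisting of lines $L_t$ that do not meet $C$ and such that the pair $(L_t,C)$ has only finitely many (possibly generalised) secant lines. Since the condition of having finitely many secant lines is open and the hypothesis of the corollary guarantees it for $L$ and $L'$, both $[L]$ and $[L']$ lie in $T$. Let $\mathscr{C}\subset X\times T$ be the restriction of the universal line family $P\to F$ from the diagram \eqref{universal line cubic threefold}. This gives a family satisfying the hypotheses of the previous proposition, with $\mathscr{C}_t=L_t$ the fibrewise line.

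By that proposition, the rule $t\mapsto \sum_i[L_i(t)]$, sending $t$ to the sum of secant lines of the pair $(L_t,C)$, defines a morphism $\Psi_{T,\tilde{J}}\colon T\to\tilde{J}$ whose image lands in $\mathrm{Pr}(\tilde{C},i)$. Since $\mathrm{Pr}(\tilde{C},i)\subset\tilde{J}$ is a subabelian variety (in particular a subgroup containing $0$), any such image lies in a single translate of it, and hence differences of values of $\Psi_{T,\tilde{J}}$ lie in the honest subgroup $\mathrm{Pr}(\tilde{C},i)$. Evaluating at the points $[L],[L']\in T$ yields
\begin{equation*}
\sum_i[L_i]-\sum_i[L'_i]\;=\;\Psi_{T,\tilde{J}}([L])-\Psi_{T,\tilde{J}}([L'])\;\in\;\mathrm{Pr}(\tilde{C},i),
\end{equation*}
which is the desired conclusion.

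The main thing to verify is therefore that the previous proposition is applicable, i.e.\ that $T$ is non-empty and that $L,L'\in T$; this is straightforward since the locus of lines with finitely many secant lines to $C$ is open, non-empty (take a generic line), and contains $L$ and $L'$ by hypothesis. There are no further obstacles: the corollary is essentially a special case of the preceding proposition, obtained by taking the parameter variety $T$ to be an open subset of the Fano surface and the family $\mathscr{C}$ to be the restriction of the universal line.
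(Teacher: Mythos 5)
Your argument is circular within the logic of the paper. The proposition you invoke --- that the image of $\Psi_{T,\tilde{J}}$ lies in $\mathrm{Pr}(\tilde{C},i)$ for a family $\mathscr{C}/T$ --- appears \emph{after} this corollary in Section 6, and its proof consists of the single line ``$\Psi_{T,\tilde{J}}(t)=v'_C(\mathscr{C}_t)$,'' where $v'_C$ is the map $L-L'\mapsto \sum[L_i]-\sum[L'_i]$ whose very well-definedness (with values in $\mathrm{Pr}(\tilde{C},i)$) is what the present corollary establishes. So when you specialize that proposition to the universal family of lines over an open subset $T\subset F$, the assertion ``the image lands in a translate of $\mathrm{Pr}(\tilde{C},i)$'' is exactly the statement to be proved, not something you may assume. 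Nothing in your write-up supplies an independent reason why the composite $T\to\tilde{J}\to\tilde{J}/\mathrm{Pr}(\tilde{C},i)$ should be constant; the formal manipulation about translates and differences is fine, but it only kicks in once that containment is known.

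The paper closes this gap differently: it chooses an auxiliary smooth admissible rational curve $C_1$ meeting both $L$ and $L'$ transversally in a single point, so that $[L]$ and $[L']$ become points of $\tilde{C}_1$, and then identifies $\sum[L_i]-\sum[L'_i]$ with $D_{21}([L]-[L'])$ for the correspondence $D_{21}:\tilde{J}_1\to\tilde{J}_2$ of the preceding proposition (with $C_2=C$). That proposition has already shown $\mathrm{Im}(D_{21})=\mathrm{Pr}(\tilde{C}_2,i_2)=\mathrm{Pr}(\tilde{C},i)$ via the matrix form of the quadratic relation $(D-1)(D+q-1)=0$ on $\tilde{J}_1\oplus\tilde{J}_2$, which is where the real content lives. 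To repair your proof you would need either to reproduce that correspondence-theoretic argument or to give a genuinely independent proof of the later proposition; as written, you have only restated the corollary in different language.
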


\begin{proof}
Choose another smooth admissible rational curve $C_1$ that meets
both $L$ and $L'$ transversally in a single point. Take $C_2=C$,
then the left hand side is just $D_{21}([L]-[L'])$.
\end{proof}

Fix a smooth admissible rational curve $C\subset X$ such that
$\ch(k)\nmid q$. The above corollary allows us to define a map
$$
v'_C:A_1(X)\rightarrow \mathrm{Pr}(\tilde{C},i),\quad L-L'\mapsto
\sum [L_i]-\sum [L'_i]
$$

\begin{cor}
The above map induces an morphism $v_C:J(X)\rightarrow
\mathrm{Pr}(\tilde{C},i)=P$, which is the same as $u^{-1}_{C}$.
\end{cor}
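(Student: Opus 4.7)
The plan is to define $v_C$ directly as the inverse morphism $u_C^{-1}$, which exists and is a morphism of abelian varieties because $u_C$ is an isomorphism by Theorem~\ref{J(X) as prym-tjurin}. I would then verify that this abstractly defined morphism coincides, on the generating subset of differences of lines avoiding $C$, with the set-theoretic rule $v'_C$. This ordering sidesteps a direct well-definedness check for $v'_C$: once agreement on a generating set is established, the morphism $u_C^{-1}$ is automatically the map induced by $v'_C$.

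The key input is the identity $u_C\circ(i-1)=\psi_{\tilde J}$ on $\tilde J$, which falls out of the defining relation $u'_C\circ(i-1)=\eta_*$ used in the construction $u_C=\psi_F\circ u'_C$ from the proof of Theorem~\ref{J(X) as prym-tjurin}. Since $(i-1)(i+q-1)=0$, the endomorphism $i-1$ restricts to $-q$ on $P=\mathrm{Im}(i-1)$; together with $\ch(k)\nmid q$ this yields the inversion formula
\[
u_C(x)=-\tfrac{1}{q}\,\psi_{\tilde J}(x)\qquad\text{for } x\in P.
\]

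To finish, I would apply Theorem~\ref{main theorem}(b) to the pair $(L,C)$ with $e_1=1$, $e_2=e$, $r=0$, and subtract the corresponding relation for $(L',C)$ to obtain $\sum L_i-\sum L'_i=-q(L-L')$ in $\mathrm{A}_1(X)$. Since $\psi_{\tilde J}$ carries the class $\sum[L_i]-\sum[L'_i]\in P$ to $\sum L_i-\sum L'_i$ in $J(X)$, the inversion formula gives $u_C\bigl(\sum[L_i]-\sum[L'_i]\bigr)=L-L'$, equivalently $u_C^{-1}([L]-[L'])=v'_C(L-L')$. Because the locus of lines meeting $C$ is a proper closed subscheme of the irreducible surface $F(X)$, the differences $[L]-[L']$ with $L,L'$ both avoiding $C$ already generate $\mathrm{A}_1(X)$ (using Corollary~\ref{chow group}), so $v_C:=u_C^{-1}$ is the unique morphism induced by $v'_C$.

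The only genuinely delicate point is the inversion formula for $u_C|_P$, which is where the hypothesis $\ch(k)\nmid q$ is actually used; everything else reduces to the secant-line relation in Theorem~\ref{main theorem}(b). The main psychological obstacle is resisting the urge to first prove $v'_C$ well-defined on all of $\mathrm{A}_1(X)$; inverting the logical order and defining $v_C$ as $u_C^{-1}$ up front makes the argument essentially formal.
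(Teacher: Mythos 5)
Your argument is correct and rests on exactly the two identities the paper's proof uses: $\psi_{\tilde J}|_P=u_C\circ(i-1)|_P=-q\,u_C$ and, via Theorem \ref{main theorem}(b) applied to $(L,C)$ and $(L',C)$, the relation $\sum L_i-\sum L'_i=-q(L-L')$ in $\mathrm{A}_1(X)$; the paper then simply cancels, writing $-q\,u_C\circ v_C=\psi_{\tilde J}\circ j\circ v_C=-q$ and concluding $u_C\circ v_C=1$. The only genuine difference is organizational: you define $v_C:=u_C^{-1}$ first and then match it against $v'_C$ on the generators $[L]-[L']$, which has the merit of actually settling the well-definedness of $v'_C$ on $\mathrm{A}_1(X)$, a point the paper's formulation quietly presupposes. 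One caution: the ``inversion formula'' $u_C(x)=-\tfrac{1}{q}\psi_{\tilde J}(x)$ does not make literal sense pointwise on an abelian variety, and for a single class $x=\sum[L_i]-\sum[L'_i]$ your computation only yields $q\bigl(u_C(x)-(L-L')\bigr)=0$, i.e.\ agreement up to $q$-torsion. This is harmless but should be discharged explicitly: either work with homomorphisms throughout and invoke torsion-freeness of $\Hom$ between abelian varieties (which is what the paper's cancellation of the factor $-q$ implicitly uses), or note that as $L$ varies in the connected family of lines avoiding $C$ the discrepancy defines a morphism into the finite group scheme $J(X)[q]$, hence is constant, and vanishes at $L=L'$.
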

\begin{proof}
We have the following commutative diagram
$$
\xymatrix{
 J(X)\ar[r]^{v_C}\ar[d]_{-q} &P\ar[rd]_{-q}\ar@{^(->}[r]^{j}&\tilde{J}\ar[d]\ar[r]^{\psi_{\tilde{J}}\quad} &J(X)\\
 J(X)\ar[rr]^{v_C}&&P\ar[ru]_{u_C} &
}
$$
Then we have $-qu_C\circ v_C=\psi_{\tilde{J}}\circ j\circ v_C=-q$ on
$J(X)$. This implies that $u_C\circ v_C=1$.
\end{proof}

Now we study the Abel-Jacobi map associate to a family of curves on
$X$. Let $\mathscr{C}\subset (X\times T)$ be a family of curves on
$X$ parameterized by $T$. The Abel-Jacobi map $\psi_T$ associated to
this family is given by sending $t$ to the class of $\mathscr{C}_t$.
Assume that for a general point $t\in T$, the curve $\mathscr{C}_t$
does not meet $C$ and the pair $(\mathscr{C}_t,C)$ has finitely many
(generalized) secant lines. The rule associating all the
(generalized) secant lines of $(\mathscr{C}_t,C)$ to the point $t$
defines a correspondence
$$
\Psi_{T,C}:T\dashrightarrow \tilde{C}
$$
Up to the choice of a base point of $T$, one gets a morphism
$$
\Psi_{T,\tilde{J}}:T\rightarrow \tilde{J}
$$

\begin{prop}
Let $T$ and $\Psi_{T,\tilde{J}}$ be as above, then the following are
true\\
(a) The image of $\Psi_{T,\tilde{J}}$ is in
$P=\mathrm{Pr}(\tilde{C},i)\subset\tilde{J}$.\\
(b) The composition $\xymatrix{T\ar[r]^{\Psi_{T,\tilde{J}}}
&P\ar[r]^{u_C} &J(X)}$ is identified with the Abel-Jacobi map
$\psi_T$.
\end{prop}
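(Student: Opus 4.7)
The plan is to prove (a) and (b) together by showing the morphism identity
\begin{equation*}
\Psi_{T,\tilde{J}}=v_C\circ\psi_T\colon T\to\tilde{J}
\end{equation*}
where $v_C=u_C^{-1}\colon J(X)\to P$ is the morphism constructed in the second corollary above. Since $v_C$ lands in $P$, statement (a) is immediate, and composing with $u_C$ yields $u_C\circ\Psi_{T,\tilde{J}}=u_C\circ v_C\circ\psi_T=\psi_T$, which is (b).

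First I will check the identity after applying $\psi_{\tilde{J}}\colon\tilde{J}\to J(X)$. By construction $\psi_{\tilde{J}}\circ\Psi_{T,\tilde{J}}(t)$ is represented by the 1-cycle $\sum_i L_i^{(t)}-\sum_i L_i^{(t_0)}$ on $X$. Theorem \ref{main theorem}(b) applied to the pair $(C,\mathscr{C}_t)$, which is available since by hypothesis $\mathscr{C}_t$ does not meet $C$, gives
\begin{equation*}
\sum_i L_i^{(t)}+2e_t C+2e\,\mathscr{C}_t\equiv 3e\,e_t\,h^{n-2}
\end{equation*}
in $\mathrm{CH}_1(X)$, and subtracting the corresponding identity at $t_0$ yields $\psi_{\tilde{J}}\circ\Psi_{T,\tilde{J}}=-q\,\psi_T$. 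The identity $\psi_{\tilde{J}}\circ v_C=-q\cdot\mathrm{id}_{J(X)}$ is already in the proof of the preceding corollary (combine $u_C\circ v_C=\mathrm{id}$ with $\psi_{\tilde{J}}|_P=-q\,u_C$). Hence the difference $\Phi\coloneqq\Psi_{T,\tilde{J}}-v_C\circ\psi_T\colon T\to\tilde{J}$ satisfies $\psi_{\tilde{J}}\circ\Phi=0$, so $\Phi$ factors through the connected component of $\ker\psi_{\tilde{J}}$, which is an abelian subvariety complementary to $P$.

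The hard part will be upgrading $\psi_{\tilde{J}}\circ\Phi=0$ to the stronger statement $\Phi=0$, since $\ker\psi_{\tilde{J}}$ is nontrivial in general. My strategy is to reduce to families of lines, where $\Phi=0$ is tautological: if $\mathscr{C}_t=L_t$ is itself a line, then the formula defining $v_C$ in the second corollary above reads exactly $v_C([L_t]-[L_{t_0}])=\sum_i[L_i^{(t)}]-\sum_i[L_i^{(t_0)}]=\Psi_{T,\tilde{J}}(t)$. By Lemma \ref{continuity lemma} the equality $\Phi=0$ is local on $T$, so I may replace $T$ by a smooth complete curve. I will then follow the scheme used in the closing arguments of the proofs of Theorem \ref{main theorem} and Corollary \ref{chow group}: attach sufficiently many very free rational curves to each member of $\mathscr{C}/T$ to obtain an enlarged family $\mathscr{C}'/T$ whose general member can be smoothed to a disjoint union of lines. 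Additivity of both $\Psi_{T,\tilde{J}}$ and $v_C\circ\psi_T$ over the components of a reducible curve, which itself stems from the linearity in Theorem \ref{main theorem}(b) and (c), then reduces the general case to the line case.

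The most delicate point of this reduction will be ensuring that after the attachment-and-smoothing the pair $(C,\mathscr{C}'_t)$ still has only finitely many generalized secant lines, that the admissibility conditions on $C$ remain intact, and that the resulting decomposition of $\Psi_{T,\tilde{J}}$ over the components of the degenerate fiber really matches the sum of line contributions on the nose (not merely up to $\ker\psi_{\tilde{J}}$). Once this is controlled, $\Phi=0$ follows, and then (a) and (b) are simultaneous consequences of the identity $\Psi_{T,\tilde{J}}=v_C\circ\psi_T$.
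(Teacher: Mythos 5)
Your reduction to the identity $\Psi_{T,\tilde{J}}=v_C\circ\psi_T$ is the right reformulation, and your first step (checking the identity after composing with $\psi_{\tilde{J}}$, via Theorem \ref{main theorem}(b)) is correct but, as you yourself note, only determines $\Psi_{T,\tilde{J}}-v_C\circ\psi_T$ up to the abelian subvariety $\ker(\psi_{\tilde{J}})^0$. The second step you propose to close this gap does not work as sketched, for two reasons. First, the claim that after attaching very free rational curves the enlarged members ``can be smoothed to a disjoint union of lines'' is not established anywhere in the paper and is false in general: a union of lines has a different Hilbert polynomial (e.g.\ $\chi(\calO)$) from a positive-genus member of $\mathscr{C}'$, so they need not lie in a common Hilbert scheme component, and the arguments in Theorem \ref{main theorem} and Corollary \ref{chow group} only produce \emph{rational equivalences} with combinations of lines, never geometric degenerations to them. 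Second, even granting such a connecting family $B$, knowing $\Phi=0$ at the special point $b_1$ parameterizing lines does not give $\Phi=0$ at a general point of $B$: $\Phi|_B$ is a morphism from a curve to an abelian variety and need not be constant, and Lemma \ref{continuity lemma} runs in the wrong direction here (it propagates constancy from a dense open subset to the whole curve, not from one special point outward). The only way to make $\Phi|_B$ constant would be to know it factors through rational equivalence of the fibers --- which is precisely the statement at issue.

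The missing idea, and the content behind the paper's one-line proof, is that the secant-line assignment is itself induced by a correspondence and therefore factors through rational equivalence from the start. Concretely, $t\mapsto\sum[L_i]$ is the composite of $Z\mapsto D_Z$ (the divisor on $F$ of lines meeting $Z$, i.e.\ the transpose of the cylinder correspondence $P\subset X\times F$) with $\eta^*:\Pic(F)\rightarrow\Pic(\tilde{C})$; equivalently it is induced by the $2$-dimensional correspondence $(\mathrm{id}_X\times\eta)^*P\subset X\times\tilde{C}$. Hence $\Psi_{T,\tilde{J}}$ factors as $T\rightarrow \mathrm{A}_1(X)\cong J(X)\rightarrow\Pic^0_F\xrightarrow{\eta^*}\tilde{J}$, and on differences of lines this composite is exactly $v'_C$, which the preceding two corollaries identify with $v_C=u_C^{-1}$; since lines generate $\mathrm{A}_1(X)$ by Corollary \ref{chow group}, the identity $\Psi_{T,\tilde{J}}=v_C\circ\psi_T$ follows at once, giving (a) and (b) simultaneously. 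This is what the paper compresses into ``$\Psi_{T,\tilde{J}}(t)=v'_C(\mathscr{C}_t)$ by definition.'' Your write-up replaces this observation with a degeneration scheme that both rests on an unproved geometric claim and would not suffice even if that claim held, so the argument has a genuine gap.
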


\begin{proof}
By definition, $\Psi_{T,\tilde{J}}(t)=v'_C(\mathscr{C}_t)$. Then the
proposition follows easily.
\end{proof}

\end{document}